\date{February 15, 2019}
\title{Geometric invariants of $5/2$-cuspidal edges}
\author[A.~Honda]{Atsufumi Honda}
\address{%
   Department of Applied Mathematics, 
   Faculty of Engineering, Yokohama National University, 
   79-5 Tokiwadai, Hodogaya, Yokohama 240-8501, Japan
}
\email{honda-atsufumi-kp@ynu.jp}
\author[K.~Saji]{Kentaro Saji}
\address{%
   Department of Mathematics, 
   Kobe University,
   Rokko 1-1, Nada, 
   Kobe 657-8501, Japan
}
\email{saji@math.kobe-u.ac.jp}
\thanks{This work was supported by 
JSPS KAKENHI Grants numbered JP26400087, 16K17605.}
\subjclass[2010]{%
Primary 57R45; 
Secondary 53A05, 
53A04. 
}
\keywords{%
${5/2}$-cusp, 
rhamphoid cusp, 
$5/2$-cuspidal edge, 
frontal,
spacelike surface with constant mean curvature,
intrinsic invariant,
isometric deformation,
Kossowski metric.
}
\dedicatory{
{Dedicated to Professor Takashi Nishimura on the occasion of 
his 60th birthday.}}
\theoremstyle{plain}
 \newtheorem{theorem}{Theorem}[section]
 \newtheorem{proposition}[theorem]{Proposition}
 \newtheorem{fact}[theorem]{Fact}
 \newtheorem*{fact*}{Fact}
 \newtheorem{lemma}[theorem]{Lemma}
 \newtheorem{corollary}[theorem]{Corollary}
 \theoremstyle{remark}
 \newtheorem{definition}[theorem]{Definition}
 \newtheorem{remark}[theorem]{Remark}
 \newtheorem*{acknowledgements}{Acknowledgements}
 \newtheorem{example}[theorem]{Example}
\numberwithin{equation}{section}
\newcommand{\vect}[1]{\boldsymbol{#1}}
\newcommand{\e}{\vect{e}}
\newcommand{\bb}{\vect{b}}
\newcommand{\n}{\vect{n}}
\newcommand{\inner}[2]{\left\langle{#1},{#2}\right\rangle}
\newcommand{\ep}{\varepsilon}
\newcommand{\R}{\boldsymbol{R}}
\newcommand{\A}{\mathcal{A}}
\newcommand{\sgn}{\operatorname{sgn}}
\newcommand{\rank}{\operatorname{rank}}
\newcommand{\aaa}{\alpha_1}
\newcommand{\ab}{\alpha_2}
\newcommand{\ac}{\alpha_3}
\newcommand{\ad}{\alpha_4}
\begin{document}
\begin{abstract}
We introduce two invariants
called 
the \emph{secondary cuspidal curvature}
and the \emph{bias}
on $5/2$-cuspidal edges,
and investigate their basic properties.
While the secondary cuspidal curvature 
is an analog of
the cuspidal curvature of (ordinary) cuspidal edges, 
there are no invariants corresponding to the bias.
We prove that the product 
(called the \emph{secondary product curvature\/}) of 
the secondary cuspidal curvature 
and the limiting normal curvature
is an intrinsic invariant.
Using this intrinsicity, 
we show that any real analytic $5/2$-cuspidal edges 
with non-vanishing limiting normal curvature
admit non-trivial isometric deformations,
which provides the extrinsicity of various invariants.
\end{abstract}
\maketitle

\section{Introduction}
{In this paper,
we study local differential geometric properties
of curves and surfaces with singular points.
Since we look at local properties,
we essentially deal with map-germs: 
$(\R,0)\to(\R^2,0)$ and $(\R^2,0)\to(\R^3,0)$.
We consider invariants under an action that is a 
diffeomorphism on the source space and
an orientation preserving isometry 
of the target Euclidean space: $\R^2$ and $\R^3$.
In the case of curves,
we take a representative and 
identify a map-germ $(\R,0)\to(\R^2,0)$ with a curve $I\to\R^2$, 
where $I$ is an open interval including the origin of $\R$.
Similarly, in the case of surfaces,
we take a representative and 
identify a map-germ $(\R^2,0)\to(\R^3,0)$ with a surface $U\to\R^3$, 
where $U$ is an open neighborhood of the origin in $\R^2$.
We mainly deal with $5/2$-cusps and $5/2$-cuspidal edges
in this paper.}

The {\it ordinary cusp\/} or {\it $3/2$-cusp\/} is a map-germ
$(\R,0)\to(\R^2,0)$ which is
diffeomorphic
($\A$-equivalent) to the map-germ $t\mapsto(t^2,t^3)$
at the origin.
It is known that the $3/2$-cusp is the most frequently appearing
singularity on plane curves.
A {\it cuspidal edge\/} is a map-germ
$(\R^2,0)\to(\R^3,0)$ which is
$\A$-equivalent to the 
map-germ $(u,v)\mapsto(u,v^2,v^3)$
at the origin (Figure \ref{fig:CEor}, right),
where two map-germs $f,g:(\R^m,0)\to(\R^n,0)$ are
{\it $\A$-equivalent\/} if there exist diffeomorphisms
$\phi_s:(\R^m,0)\to(\R^m,0)$ and
$\phi_t:(\R^n,0)\to(\R^n,0)$ 
such that $\phi_t\circ f\circ\phi_s=g$.
{By definition, the image of a cuspidal edge
is diffeomorphic to a direct product
of a $3/2$-cusp with an interval ($\{(x,y,z)\,|\,y^3-z^2=0\}$)}, 
and its differential geometric properties
are well studied.
In \cite{u} (see also \cite{SUY2}), 
the 
{\it cuspidal curvature\/} for
$3/2$-cusps is defined.
Roughly speaking, 
the cuspidal curvature measures {whether
a $3/2$-cusp is narrower or wider}.
For cuspidal edges,
the {\it singular curvature\/} and
the {\it limiting normal curvature\/} are 
introduced in \cite{SUY}, and their geometric meanings are studied.

A {\it ${5/2}$-cusp\/} (respectively,
{\it ${5/2}$-cuspidal edge\/}) is a map-germ
$(\R,0)\to(\R^2,0)$ (respectively,
$(\R^2,0)\to(\R^3,0)$) which is
$\A$-equivalent to the 
map-germ $t\mapsto(t^2,t^5)$
(respectively,
$(u,v)\mapsto(u,v^2,v^5)$) 
at the origin (Figure \ref{fig:CEor}, left).
A ${5/2}$-cusp is also called a {\it rhamphoid cusp}.
{Although ${5/2}$-cuspidal edges do not generically
appear, it has been pointed out that}
they naturally appear in various differential
geometric situations \cite{HKS,ishiyama,porteous}.
For ${5/2}$-cusps,
the cuspidal curvature vanishes.
Hence, to measure the width of ${5/2}$-cusps,
we need to consider {higher order invariants}.
In this paper, we define two curvatures
on ${5/2}$-cusps in addition to the invariants
we mentioned above, which are
the 
{\it secondary cuspidal curvature\/}
and 
the 
{\it bias} of cusps.
The secondary cuspidal curvature is an analog
of cuspidal curvature of $3/2$-cusps, but
as we will see in Section \ref{sec:rhamphoidcusp},
there is no corresponding notion of
bias for $3/2$-cusps.
Using {these invariants, 
the secondary cuspidal curvature and 
the bias,}
we also define two curvatures for
${5/2}$-cuspidal edges.

\begin{figure}[htb]
\begin{center}
 \begin{tabular}{{c@{\hspace{15mm}}c}}
  \resizebox{5.2cm}{!}{\includegraphics{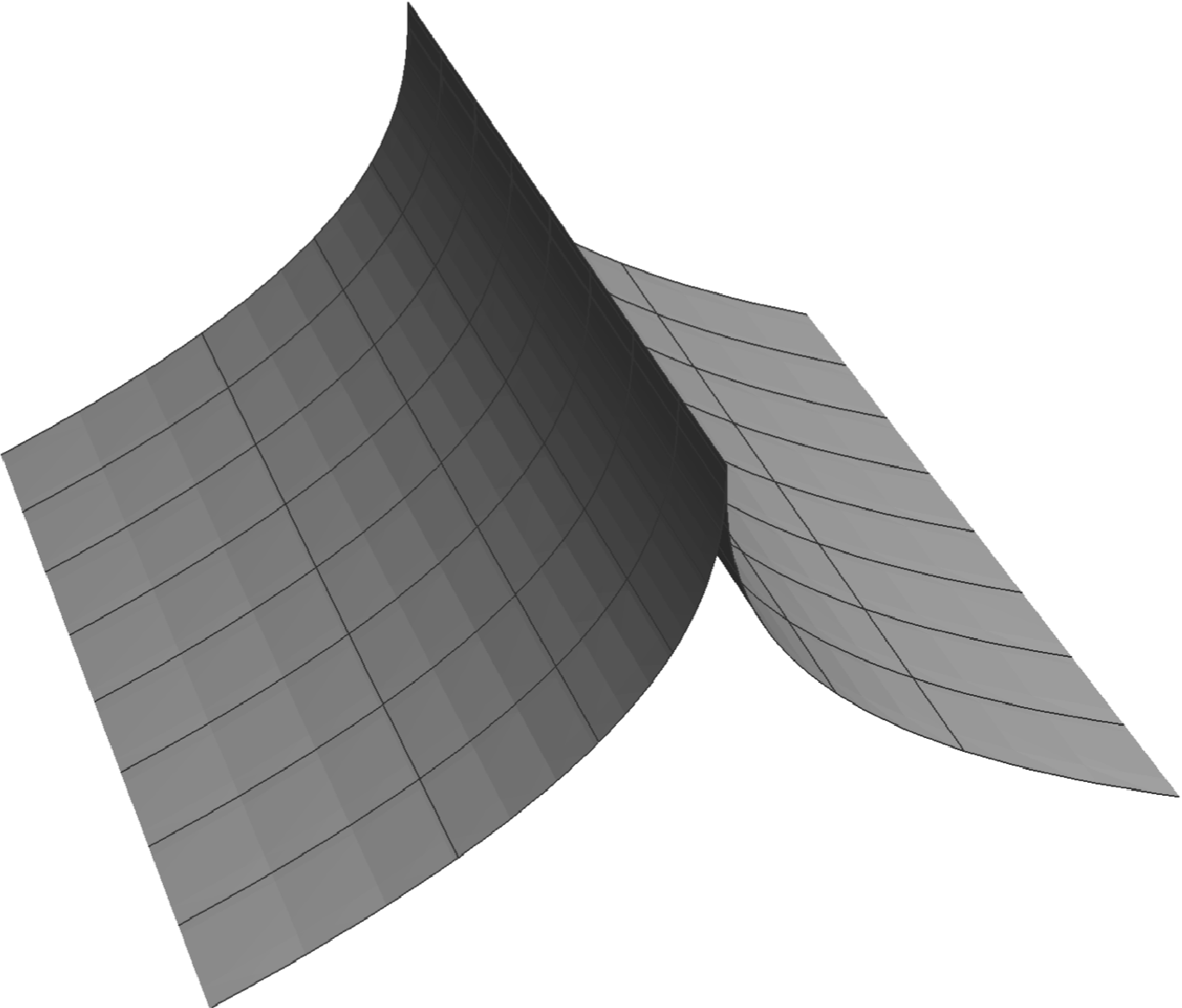}} &
  \resizebox{5cm}{!}{\includegraphics{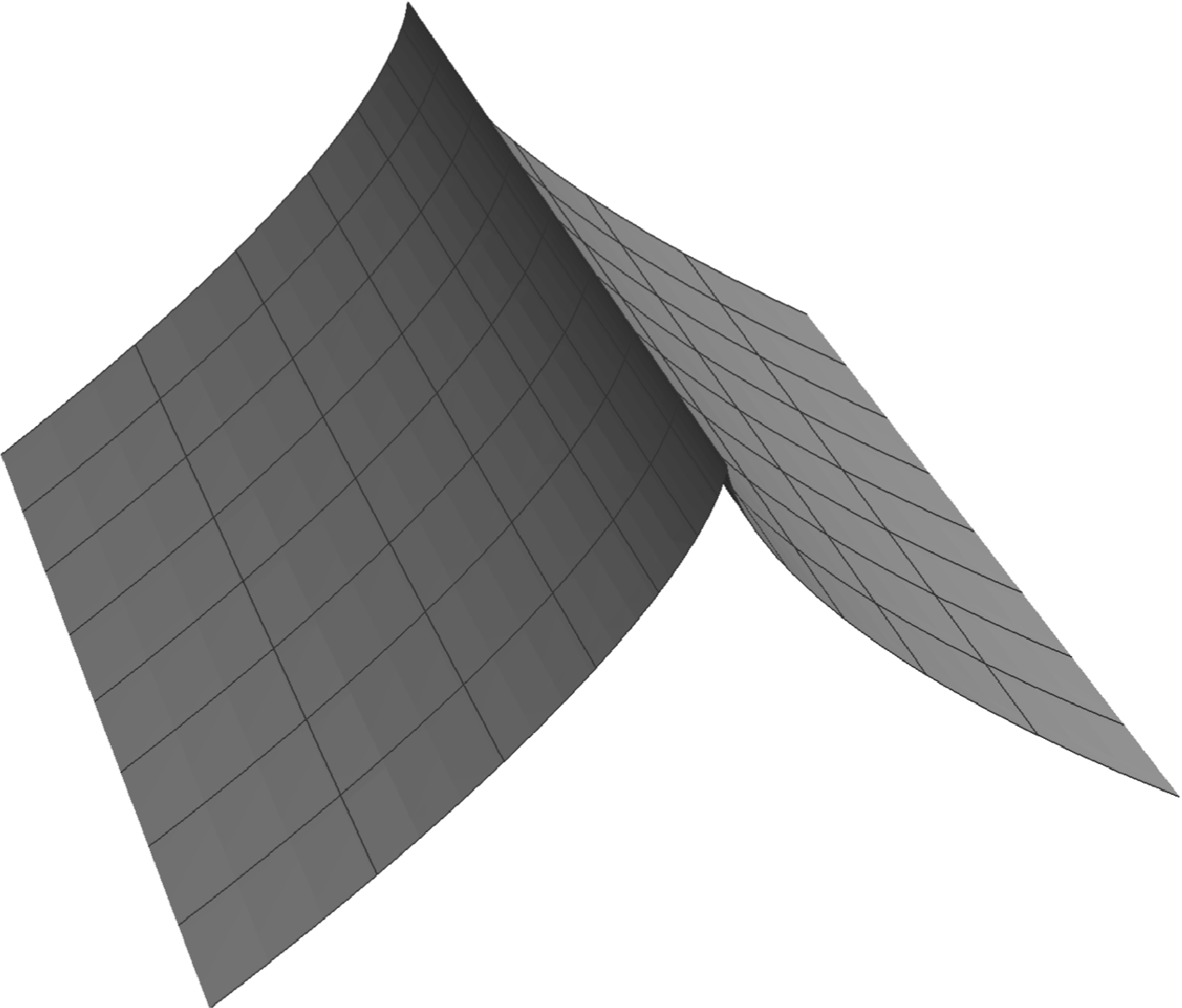}} 
 \end{tabular}
 \caption{The standard $5/2$-cuspidal edge (left) and cuspidal edge (right).}
 \label{fig:CEor}
\end{center}
\end{figure}

On the other hand,
one fundamental problem is
{to determine} the intrinsicity and extrinsicity of invariants. 
It is proved that {some basic invariants 
such as the singular curvature and the product curvature}
are intrinsic in \cite{SUY, MSUY}, and
they have various applications.
For example, 
the intrinsicity of the product curvature 
is used to prove existence
of isometric deformations of real analytic cuspidal edges
with non-vanishing limiting normal curvature
in \cite{NUY} and \cite{HNUY}.
{
See \cite{HHNSUY} for other applications.
In \cite{intcr, HNUY0},
several geometric invariants of cross caps
are proved to be intrinsic or extrinsic.}
In this paper, we determine
whether the above invariants of ${5/2}$-cuspidal edges
are intrinsic or extrinsic,
proving the existence
of isometric deformations of
real analytic ${5/2}$-cuspidal edges
with non-vanishing limiting normal curvature
as in \cite{NUY} and \cite{HNUY}.

This paper is organized as follows.
In Section \ref{sec:rhamph-cusps},
we define the secondary cuspidal curvature
and the bias for ${5/2}$-cusps,
and study their geometric properties.
In Section \ref{sec:rhamph-CE},
we deal with ${5/2}$-cuspidal edges
and define two invariants for them.
As an example, in Section \ref{sec:conjugate-Delaunay},
we calculate the invariants on the conjugate surfaces 
of spacelike Delaunay surfaces.
In Section \ref{sec:intrinsic},
we prove that the product 
(called the \emph{secondary product curvature\/}) of 
the secondary cuspidal curvature  
and the limiting normal curvature 
is an intrinsic invariant.
Using this intrinsicity, we show the existence
of isometric deformations of
real analytic ${5/2}$-cuspidal edges
with non-vanishing limiting normal curvature,
which yields the extrinsicity of various invariants,
see Table \ref{table:RCE}.
Finally, in Section \ref{sec:Kossowski},
we provide an intrinsic formulation of 
${5/2}$-cuspidal edges 
as a singular point of a positive semi-definite metric,
called the Kossowski metric.
Using an argument similar to that in Section \ref{sec:intrinsic},
we prove the existence of isometric realizations 
of Kossowski metrics with intrinsic ${5/2}$-cuspidal edges. 

\section{${5/2}$-cusps}
\label{sec:rhamph-cusps}
In this section, we discuss the geometric
properties of ${5/2}$-cusps.

\subsection{Invariants of ${5/2}$-cusps}
\label{sec:rhamphoidcusp}
Let $\gamma:(\R,0)\to(\R^2,0)$ be a map-germ,
and $\gamma'(0)=0$.
{We say that $\gamma$ is of {\it $A$-type\/} if
$\gamma''(0)\ne0$.}
Let $\gamma:(\R,0)\to(\R^2,0)$ be an $A$-type
map-germ.
The {\it cuspidal curvature\/} for $\gamma$ at $0$
is defined by
$$
\omega(\gamma,0)=
\dfrac{\det(\gamma''(0),\gamma'''(0))}
{|\gamma''(0)|^{5/2}},
$$
which measures a kind of wideness of $\gamma$ at $0$ (\cite{SUY2}).
{We may abbreviate $\omega(\gamma,0)$ 
as $\omega(\gamma)$, or $\omega$, in some cases.}
It is well known that an $A$-type map-germ $\gamma$
is a $3/2$-cusp if and only if 
$\det(\gamma''(0),\gamma'''(0))\neq0$,
and hence $\omega\ne0$.

Let $\gamma:(\R,0)\to(\R^2,0)$ be an $A$-type
map-germ with $\det(\gamma''(0),\gamma'''(0))=0$.
Then there exists $l\in \R$ such that
$$
\gamma'''(0)=l\gamma''(0).
$$
Then the {\it secondary cuspidal curvature\/} for 
$\gamma$ at $0$
is defined by
$$
\omega_r(\gamma,0)
=\dfrac{\det\Big(\gamma''(0),
3\gamma^{(5)}(0)-10l \gamma^{(4)}(0)\Big)}{|\gamma''(0)|^{7/2}}.
$$
We abbreviate $\omega_r=\omega_r(\gamma)=\omega_r(\gamma,0)$
as well.
By a direct calculation,
one can see that $\omega_r$ does not depend on the
parameter of $\gamma$.
The following criterion for ${5/2}$-cusp is known \cite{porteous}:
\begin{fact}\label{fact:cri25}
Let\/ $\gamma:(\R,0)\to(\R^2,0)$ be a map-germ with $\gamma'(0)=0$.
Then\/ $\gamma$ is a\/ ${5/2}$-cusp if and only if
\begin{enumerate}
\item\label{itm:25cond2} $\det(\gamma''(0),\gamma'''(0))=0$,
\item\label{itm:25cond3} 
$3\det(\gamma''(0),\gamma^{(5)}(0))\gamma''(0)
-10\det(\gamma''(0),\gamma^{(4)}(0))\gamma'''(0)\ne (0,0)$.
\end{enumerate}
\end{fact}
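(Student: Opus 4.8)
The plan is to first show that conditions (1) and (2) are invariant under $\A$-equivalence among $A$-type germs, and then to read them off from a normal form. Write
\[
V(\gamma)=3\det\!\big(\gamma''(0),\gamma^{(5)}(0)\big)\,\gamma''(0)
-10\det\!\big(\gamma''(0),\gamma^{(4)}(0)\big)\,\gamma'''(0),
\]
so that (2) is the condition $V(\gamma)\neq(0,0)$. If $\gamma''(0)=(0,0)$ then both determinants vanish and $V(\gamma)=(0,0)$; hence (2) forces $\gamma''(0)\neq(0,0)$, so I may assume $\gamma$ is $A$-type throughout. Condition (1), the vanishing of $\det(\gamma''(0),\gamma'''(0))$, says exactly that the $A$-type germ $\gamma$ is \emph{not} a $3/2$-cusp, which is $\A$-invariant by the criterion recalled before the statement; this disposes of (1).

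The heart of the argument is the $\A$-invariance of (2) under (1). Assume (1) and write $\gamma'''(0)=l\,\gamma''(0)$, so that $V(\gamma)=\big(3\det(\gamma'',\gamma^{(5)})-10l\det(\gamma'',\gamma^{(4)})\big)\gamma''(0)$ at $0$. For a target diffeomorphism $\Phi$ with $J=d\Phi_0$ and $B=d^2\Phi_0$, a chain-rule (Fa\`a di Bruno) computation at $0$, in which every term carrying a factor $\gamma'(0)=0$ drops, gives
\[
\widetilde\gamma''(0)=J\gamma'',\quad
\widetilde\gamma'''(0)=J\gamma''',\quad
\widetilde\gamma^{(4)}(0)=J\gamma^{(4)}+3B(\gamma'',\gamma''),\quad
\widetilde\gamma^{(5)}(0)=J\gamma^{(5)}+10B(\gamma''',\gamma''),
\]
where $\widetilde\gamma=\Phi\circ\gamma$ and the derivatives on the right are at $0$. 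Substituting these and using $\gamma'''=l\gamma''$, the contributions of the Hessian term $B(\gamma'',\gamma'')$ cancel \emph{precisely} because of the coefficients $3$ and $10$, and one is left with $V(\widetilde\gamma)=(\det J)\,J\,V(\gamma)$. As $J$ is invertible and $\det J\neq0$, this shows $V(\widetilde\gamma)\neq0\Leftrightarrow V(\gamma)\neq0$. For a source reparametrization, $\gamma''(0)$ is merely rescaled and $\omega_r$ is parameter independent (as already noted), so $V$ is multiplied by a nonzero scalar. Hence (2), under (1), is $\A$-invariant. This cancellation is the step I expect to be the main obstacle, and it is what forces the specific combination $3\gamma^{(5)}-10l\gamma^{(4)}$.

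Granting the invariance, the forward direction is immediate: for $\gamma(t)=(t^2,t^5)$ one computes $\gamma''(0)=(2,0)$, $\gamma'''(0)=(0,0)$, $\gamma^{(5)}(0)=(0,120)$, whence $\det(\gamma''(0),\gamma'''(0))=0$ and $V=(1440,0)\neq(0,0)$; so every $5/2$-cusp satisfies (1) and (2). For the converse, I reduce $\gamma$ to a normal form by $\A$-equivalences: rotate and reflect the target as needed so that $\gamma''(0)$ points along the first axis, reparametrize so that the first component becomes $t^2$, and then subtract from the second component a smooth function of the first to kill its even part, using that a smooth even function is a smooth function of $t^2$. This puts $\gamma$ in the form $(t^2,y(t))$ with $y$ odd and $y''(0)=0$. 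By the invariance just proved, (1) and (2) persist for this form, so $y'''(0)=0$ and hence $y(t)=t^5H(t^2)$ for a smooth $H$; a direct computation in this form gives $V=(1440\,H(0),0)$, so (2) forces $H(0)\neq0$. Finally the map $(X,Y)\mapsto(X,\,Y/H(X))$ is a local diffeomorphism (since $H(0)\neq0$) carrying $(t^2,t^5H(t^2))$ to $(t^2,t^5)$, so $\gamma$ is a $5/2$-cusp, completing the proof.
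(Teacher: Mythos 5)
Your proof is correct, but there is no internal proof to compare it against: the paper presents this statement as a Fact imported from the literature (Porteous \cite{porteous}) and never proves it, so your argument is a genuine, self-contained addition rather than a variant of the paper's route.

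The structure you chose is sound and the key computations check out. The crux, as you correctly identify, is the transformation law for
$V(\gamma)=3\det(\gamma''(0),\gamma^{(5)}(0))\gamma''(0)-10\det(\gamma''(0),\gamma^{(4)}(0))\gamma'''(0)$
under a target diffeomorphism $\Phi$ with $J=d\Phi_0$, $B=d^2\Phi_0$: the Fa\`a di Bruno terms surviving at $0$ come from the partitions of $\{1,\dots,4\}$ and $\{1,\dots,5\}$ into blocks of size at least two (namely $4$, $2+2$ and $5$, $3+2$), giving exactly your formulas $\widetilde\gamma^{(4)}(0)=J\gamma^{(4)}(0)+3B(\gamma''(0),\gamma''(0))$ and $\widetilde\gamma^{(5)}(0)=J\gamma^{(5)}(0)+10B(\gamma'''(0),\gamma''(0))$; with $\gamma'''(0)=l\gamma''(0)$ the Hessian contributions enter as $3\cdot 10\,l$ and $10\,l\cdot 3$ and cancel, yielding $V(\widetilde\gamma)=(\det J)\,J\,V(\gamma)$, as you claim. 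The only loose point is the source-reparametrization step, where you appeal to the parameter-independence of $\omega_r$; strictly, $\omega_r$ is invariant only up to sign (under $t\mapsto -t$ one gets $\omega_r\mapsto-\omega_r$, and a direct computation gives $V\mapsto c^{9}V$ for a parameter change with derivative $c$ at $0$), but since only the nonvanishing of $V$ is at stake this does not affect the argument --- it would merely be cleaner to record the short computation rather than cite the (also unproved) assertion about $\omega_r$. Your reduction in the converse direction --- rotating so $\gamma''(0)$ is horizontal, normalizing the first component to $t^2$, removing the even part of the second component by a target shear, concluding $y=t^5H(t^2)$ with $H(0)\neq0$ from (1), (2), and finishing with $(X,Y)\mapsto(X,Y/H(X))$ --- is correct, and is in fact the same style of normal-form manipulation the paper uses elsewhere (in the proof of Proposition \ref{prop:bias-G-meaning} and in Appendix \ref{sec:coord}), so your proof fits naturally into the paper's toolkit even though the paper itself chose to cite rather than prove this criterion.
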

By the condition \ref{itm:25cond3},
$\gamma''(0)\ne0$.
When $\gamma$ is {of} $A$-type at $0$,
the conditions \ref{itm:25cond2} and \ref{itm:25cond3}
are written as follows.
By \ref{itm:25cond2}, there exists $l\in\R$ such that
$\gamma'''(0)=l \gamma''(0)$,
and then \ref{itm:25cond3} is written as
$$
\det\left(\gamma''(0),
3\gamma^{(5)}(0)-10l \gamma^{(4)}(0)
\right)\neq0.
$$
Thus an $A$-type germ $\gamma$
is a ${5/2}$-cusp if and only if
$\omega=0$ and $\omega_r\ne0$.

Next we define the bias of cusps.
Let $\gamma:(\R,0)\to(\R^2,0)$ be an $A$-type
map-germ which is not a $3/2$-cusp (i.e., $\omega=0$).
Then
$$
b(\gamma,0)
=\dfrac{\det(\gamma''(0),\gamma^{(4)}(0))}{|\gamma''(0)|^3}
$$
does not depend on the parameter,
and it is called the {\it bias} of cusps.
We abbreviate $b=b(\gamma)=b(\gamma,0)$
as well.
Let $\gamma$ be an $A$-type germ.
A line 
$$
\left\{u \lim_{t\to0}\dfrac{\gamma'(t)}{|\gamma'(t)|}
\,;\,u\in\R\right\}
=
\left\{u \gamma''(0)
\,;\,u\in\R\right\}
$$
passing through $\gamma(0)=0$
is called the {\it tangent line\/} of $\gamma$ at $0$.
We set two images of $\gamma$ as
$$
\gamma_+=\gamma((0,\ep)),\quad
\gamma_-=\gamma((-\ep,0)),
$$
for $\ep>0$.
We have the following proposition.

\begin{proposition}\label{prop:bias-G-meaning}
Let\/ $\gamma$ be an\/ $A$-type germ with $\omega=0$.
{If\/ $b\ne0$, 
then for a sufficiently small\/ $\ep>0$,}
the images\/ $\gamma_+$ and\/ $\gamma_-$ 
lie on the same side of the tangent line of\/ $\gamma$.
Moreover, if\/ $\gamma$ is a\/ ${5/2}$-cusp
and\/ {$b=0$, then 
for a sufficiently small\/ $\ep>0$,}
the images\/ $\gamma_+$ and\/ $\gamma_-$ 
lie on
both sides of the tangent line of\/ $\gamma$.
\end{proposition}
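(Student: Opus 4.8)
The plan is to detect on which side of the tangent line the curve lies by examining the signed quantity $D(t):=\det\big(\gamma''(0),\gamma(t)\big)$, which equals $|\gamma''(0)|$ times the signed distance from $\gamma(t)$ to the tangent line $\{u\gamma''(0):u\in\R\}$. Since $|\gamma''(0)|>0$ (the germ is $A$-type), the images $\gamma_+$ and $\gamma_-$ lie on the same side of the tangent line precisely when $D(t)$ has a constant sign for all small $t\ne0$, and on opposite sides precisely when $D$ changes sign at $t=0$. So the whole problem reduces to locating the leading nonzero term of $D$ in its Taylor expansion.

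Next I would Taylor expand. Using $\gamma(0)=\gamma'(0)=0$ we have $\gamma(t)=\sum_{k\ge2}(t^k/k!)\gamma^{(k)}(0)$, hence
$$D(t)=\sum_{k\ge2}\frac{t^k}{k!}\det\big(\gamma''(0),\gamma^{(k)}(0)\big).$$
The $k=2$ term vanishes because it is the determinant of a vector with itself, and the $k=3$ term vanishes because $\omega=0$ gives $\det(\gamma''(0),\gamma'''(0))=0$. Therefore the leading contribution is the $k=4$ term, and recalling the definition of the bias,
$$D(t)=\frac{t^4}{24}\det\big(\gamma''(0),\gamma^{(4)}(0)\big)+O(t^5)=\frac{b\,|\gamma''(0)|^3}{24}\,t^4+O(t^5).$$

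The conclusion then splits into two cases. If $b\ne0$, the sign of $D(t)$ for small $t\ne0$ equals the sign of $b$, because $t^4>0$; thus $D$ does not change sign and $\gamma_+,\gamma_-$ lie on the same side. If instead $\gamma$ is a $5/2$-cusp with $b=0$, then $\det(\gamma''(0),\gamma^{(4)}(0))=0$, so the $t^4$ term drops out and the leading term becomes $(t^5/120)\det(\gamma''(0),\gamma^{(5)}(0))$; since $t^5$ changes sign at $0$, the curve crosses the tangent line, placing $\gamma_+$ and $\gamma_-$ on opposite sides.

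The one point requiring care — and the only real obstacle — is verifying that the $t^5$ coefficient does not also vanish in the degenerate case $b=0$. This is exactly where the $5/2$-cusp hypothesis enters: with $b=0$ the term $10l\,\det(\gamma''(0),\gamma^{(4)}(0))$ in the numerator of $\omega_r$ vanishes, so $\omega_r=3\det(\gamma''(0),\gamma^{(5)}(0))/|\gamma''(0)|^{7/2}$, and the characterization that an $A$-type germ is a $5/2$-cusp iff $\omega=0$ and $\omega_r\ne0$ guarantees $\det(\gamma''(0),\gamma^{(5)}(0))\ne0$. Hence the $t^5$ term is genuinely nonzero and the sign-change argument goes through.
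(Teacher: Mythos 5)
Your proof is correct, and it takes a genuinely different route from the paper's. The paper argues via a normal form: after a rotation of $\R^2$ and a change of parameter, it writes $\gamma=\bigl(t^2/2,\ t^4\gamma_4/4!+t^5\gamma_5(t)/5!\bigr)$, identifies $b=\gamma_4$ and $\omega_r=3\gamma_5(0)$, and reads the conclusion off the sign of the second coordinate, the tangent line having become the horizontal axis. You instead stay in an arbitrary parametrization and expand the signed-distance detector $D(t)=\det\bigl(\gamma''(0),\gamma(t)\bigr)$, whose Taylor coefficients are exactly the determinants appearing in the definitions of $\omega$, $b$, and (once $b=0$) $\omega_r$; the case analysis on the first nonvanishing coefficient then yields both assertions, with the $5/2$-cusp hypothesis invoked precisely where it is needed, namely to conclude $\det\bigl(\gamma''(0),\gamma^{(5)}(0)\bigr)\neq0$ from $\omega_r\neq0$ when $b=0$. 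The underlying content is the same in both arguments --- the bias governs the $t^4$ term and the secondary cuspidal curvature governs the $t^5$ term once the $t^4$ term dies --- but the trade-offs differ. The paper's reduction must be carried out (rotation plus reparametrization) and implicitly leans on the parameter-independence of $b$ and $\omega_r$ asserted earlier in that section; in exchange, the normal form \eqref{eq:cuspnormal} makes the picture transparent and is reused immediately afterwards in the proposition on the curvature function $\tilde\kappa$. Your argument is more self-contained and economical for this single statement: it needs only bilinearity of the determinant, Taylor's theorem, and the stated characterization of $5/2$-cusps by $\omega=0$, $\omega_r\neq0$, with no normalization or invariance facts.
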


\begin{proof}
By rotating $\gamma$ and by a parameter change,
we may assume that
\begin{equation}\label{eq:cuspnormal}
\gamma=\left(\dfrac{t^2}{2},
\dfrac{t^4}{4!} \gamma_4+\dfrac{t^5}{5!}\gamma_5(t)\right),
\end{equation}
{where $\gamma_4\in\R$
and $\gamma_5(t)$ is a smooth function.}
Then $b=\gamma_4$ and
$\omega_r = 3\gamma_5(0)$.
{Since the tangent line is the horizontal axis,} 
the claim of the proposition is obvious by
these observations.
\end{proof}

One can easily see that
for $3/2$-cusps,
the images $\gamma_+$ and $\gamma_-$ 
always lie on both sides of the tangent line of $\gamma$.
Thus there is no similar notion of bias
for $3/2$-cusps.
If an $A$-type map-germ $\gamma$ with $\omega=0$
satisfies $b=0$, 
then $\gamma$ is said to be {\it balanced\/} 
(see Figure \ref{fig:rcusps}).

\begin{figure}[htb]
\begin{center}
 \begin{tabular}{{c@{\hspace{15mm}}c}}
  \resizebox{5cm}{!}{\includegraphics{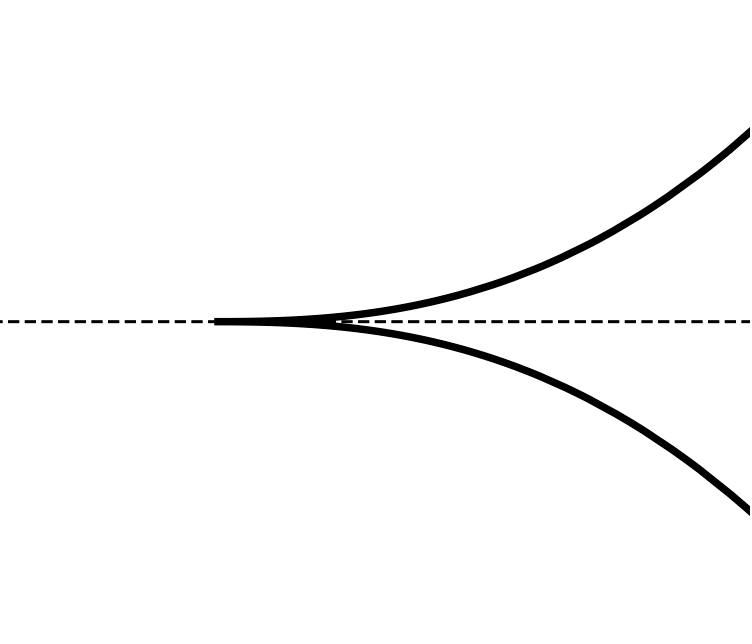}} &
  \resizebox{5cm}{!}{\includegraphics{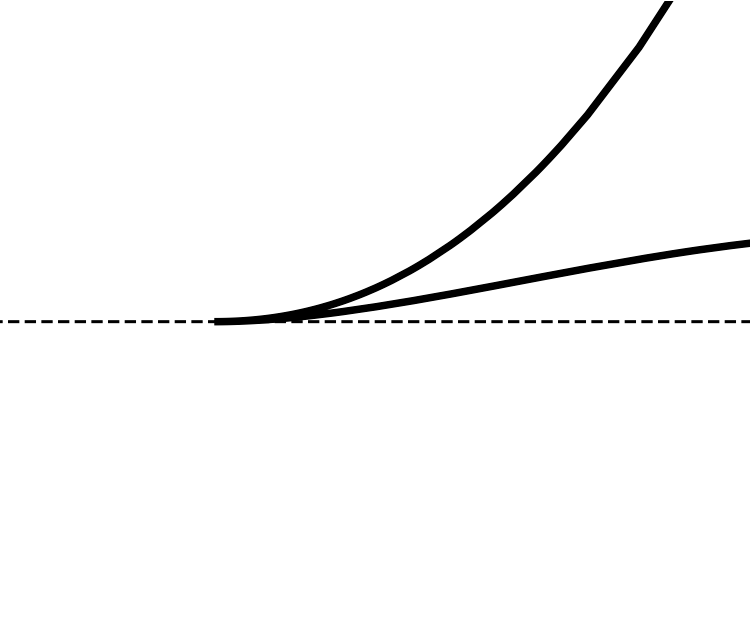}} 
 \end{tabular}
 \caption{The left figure shows a balanced $5/2$-cusp (i.e., $b=0$), 
       and the right one is non-balanced (i.e., $b\ne0$). 
       The dotted lines are the {tangent lines} at each singular point.
       As we have shown in Proposition \ref{prop:bias-G-meaning},
       the image of a balanced $5/2$-cusp extends over 
       the two domains separated by the tangent line.}
 \label{fig:rcusps}
\end{center}
\end{figure}

\subsection{Behavior of the curvature function}

Let $s_g$ be the {arclength} function 
$s_g(t)=\int_0^t|\gamma'(t)|\,dt$ of {an $A$-type}
germ $\gamma:(\R,0)\to(\R^2,0)$.
It is shown that
$(s(t):=)\, \sgn(t)\sqrt{|s_g(t)|}$
is $C^\infty$-differentiable and $s'(0)\neq0$
(\cite[Theorem 1.1]{su}).
Thus one can take $s(t)$
as a parameter, which is called {the {\it half-arclength parameter\/}}
\cite{su}.
We have the following proposition.
\begin{proposition}
Let\/ $\gamma:(\R,0)\to(\R^2,0)$ be a\/
${5/2}$-cusp, and\/ $t$ a parameter.
Let\/ $\kappa$ be the curvature defined everywhere except\/ $t=0$.
Then\/
$
\tilde\kappa=\sgn(t)\kappa
$
is a\/ $C^\infty$ function, and
$$
\tilde\kappa(0)=\dfrac{b}{3},\quad
\dfrac{d}{ds}\tilde\kappa(0)=
\dfrac{\sqrt{2}}{24}\omega_r
$$
holds, where\/ $s$ is the half-arclength parameter.
\end{proposition}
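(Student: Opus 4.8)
The plan is to carry out the whole computation in the normal form \eqref{eq:cuspnormal} established in the proof of Proposition~\ref{prop:bias-G-meaning}. This is legitimate because the signed curvature $\kappa=\det(\gamma',\gamma'')/|\gamma'|^3$ is invariant under rotations, and because $\sgn(t)\kappa$ is unchanged under reparametrization: under an orientation-reversing change $t\mapsto-t$ the determinant $\det(\gamma',\gamma'')$ changes sign, and this sign is exactly absorbed by $\sgn(t)$, while the half-arclength parameter $s$ is intrinsic up to sign. Hence $\tilde\kappa(0)$ and $(d/ds)\tilde\kappa(0)$, together with $b$ and $\omega_r$, are genuine invariants of the cusp, and I may assume
\[
\gamma(t)=\left(\frac{t^2}{2},\ \frac{b}{24}t^4+\frac{\omega_r}{360}t^5+O(t^6)\right),
\]
where I have substituted $b=\gamma_4$ and $\omega_r=3\gamma_5(0)$ from that proof.

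First I would prove the smoothness of $\tilde\kappa$ and thereby dispose of the apparent singularity at $t=0$. Put $N(t)=\det(\gamma'(t),\gamma''(t))$ and $E(t)=|\gamma'(t)|^2$. A direct expansion gives $E(t)=t^2h(t)$ with $h$ smooth and $h(0)=1>0$, so that $E^{3/2}=|t|^3h^{3/2}$ with $h^{3/2}$ smooth and positive near $0$. Since $N$ vanishes to order three at $t=0$, Hadamard's lemma yields $N(t)=t^3\tilde N(t)$ with $\tilde N$ smooth. Using $t^3/|t|^3=\sgn(t)$ we obtain
\[
\tilde\kappa=\sgn(t)\,\frac{N}{E^{3/2}}=\frac{\tilde N(t)}{h(t)^{3/2}},
\]
a quotient of smooth functions with non-vanishing denominator, which is therefore $C^\infty$.

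It remains to extract the two coefficients. Expanding $N=x'y''-y'x''$ in the normal form gives $N(t)=\tfrac{b}{3}t^3+\tfrac{\omega_r}{24}t^4+O(t^5)$, so $\tilde N(0)=b/3$ and $\tilde N'(0)=\omega_r/24$; combined with $h(0)=1$ and $h'(0)=0$ this produces $\tilde\kappa(0)=b/3$ and $(d/dt)\tilde\kappa(0)=\omega_r/24$. For the $s$-derivative I would compute the half-arclength expansion from $|\gamma'(t)|=|t|\,h(t)^{1/2}$: one finds $s_g(t)=\tfrac{t^2}{2}+O(t^6)$, hence $s(t)=\sgn(t)\sqrt{|s_g(t)|}=\tfrac{t}{\sqrt2}+O(t^5)$ and $s'(0)=1/\sqrt2$. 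The chain rule $(d/ds)\tilde\kappa=(d\tilde\kappa/dt)/(ds/dt)$ at $t=0$ then gives $(\omega_r/24)/(1/\sqrt2)=\sqrt2\,\omega_r/24$, as asserted.

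All of these are routine Taylor computations; the one genuinely delicate point is the smoothness step, where one must check that the exact third-order vanishing of $N$ matches the factor $|t|^3$ arising from $E^{3/2}$ so that $\sgn(t)$ is absorbed cleanly, and that the leading term of $E$ is precisely $t^2$ (giving $h(0)\ne0$) rather than a higher-order quantity.
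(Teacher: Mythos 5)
Your proof is correct and follows essentially the same route as the paper's: reduce to the normal form \eqref{eq:cuspnormal}, expand $\det(\gamma',\gamma'')$ and $|\gamma'|^3$ so that $\sgn(t)$ is absorbed, and conclude with $dt/ds=\sqrt{2}$ at $0$. The only differences are presentational: you spell out the smoothness step via the Hadamard factorization $N=t^3\tilde N$ and justify the reduction to the normal form, both of which the paper leaves implicit.
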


\begin{proof}
We may assume that $\gamma$ is given by
the form \eqref{eq:cuspnormal} without
loss of generality.
Then 
$$
\dfrac{\det(\gamma',\gamma'')}
{|\gamma'|^3}
=
\dfrac{
\dfrac{\gamma_4}{3}t^3+\dfrac{\gamma_5(0)}{8}t^4
+O(5)}
{\left|t^6+\dfrac{\gamma_4^2}{12}t^{10}+O(11)\right|^{1/2}}
=
\sgn(t)\left(\dfrac{\gamma_4}{3}+\dfrac{\gamma_5(0)}{8}t+O(2)\right)
$$
holds.
Here, $O(n)$ stands for the terms whose degrees are
greater than or equal to $n$.
On the other hand, by 
$|\gamma'|=\big|t\sqrt{1+t^4\gamma_4^2/36+O(5)}\big|=
|t+\gamma_4^2t^4/72+O(5)|$ and 
$s_g=t^2(1/2+O(4))$,
it holds that
$s=t\sqrt{1/2+O(4)}$ and
$dt/ds= \sqrt{2} $ at $0$.
The proposition is then obvious from the above calculations.
\end{proof}

See \cite{ft} for
another treatment of curvatures of curves with singularities.

\subsection{Projection of space curves}
Let $\Gamma:(\R,0)\to(\R^3,0)$ be a regular space curve,
and let $t$ be an {arclength parameter} of $\Gamma$, and
$\e, \n,\bb$ {the} Frenet frame.
We set {the orthogonal projection of $\Gamma$ 
to the normal plane $(\e(0))^\perp$ at $0$ by}
$$\gamma(t)=\Gamma(t)-\inner{\Gamma(t)}{\e(0)}\e(0).$$
Note that $\gamma'(0)=0$.
Then 
$\gamma$ at $0$ is $A$-type if and only if
$\kappa(0)\ne0$, where $\kappa$ is the curvature of $\Gamma$.
We assume that $\gamma$ is $A$-type
(i.e., $\kappa(0)\ne0$).
Since 
$$
\omega(\gamma,0)=\frac{\tau(0)}{\sqrt{\kappa(0)}},
$$ 
$\gamma$ at $0$ is a $3/2$-cusp if and only if
$\tau(0)\ne0$, where $\tau$ is the torsion of $\Gamma$.
If $\gamma$ is $A$-type but not a $3/2$-cusp
(i.e., $\kappa(0)\ne0$, $\tau(0)=0$), then
$$
b(\gamma,0)
=\frac{\tau'}{\kappa}(0),
\qquad
\omega_r(\gamma,0)=\frac{-\kappa'\tau'+3\kappa\tau''}{\kappa^{5/2}}(0).
$$
Thus, under the assumption $\kappa(0)\ne0$,
$\gamma$ is 
a $3/2$-cusp if and only if $\tau(0)\ne0$, and
$\gamma$ is 
not a $3/2$-cusp and non-balanced if and only if 
$\tau(0)=0$, $\tau'(0)\ne0$, and
$\gamma$ is 
a balanced $5/2$-cusp if and only if 
$\tau(0)=\tau'(0)=0$, $\tau''(0)\ne0$.

\section{Invariants of ${5/2}$-cuspidal edges}
\label{sec:rhamph-CE}
In this section, we discuss the geometric
properties of ${5/2}$-cuspidal edges.
\subsection{Frontals}
\label{sec:frontal}
Let $f:(\R^2,0)\to(\R^3,0)$ be a map-germ.
We call $f$ a {\it frontal\/} if there exists
a map $\nu:(\R^2,0)\to S^2$ satisfying
$\inner{df(X)}{\nu}=0$ for any $X\in T_p\R^2$ and
$p\in(\R^2,0)$,
where $S^2$ stands for the unit sphere in $\R^3$.
We call $\nu$ a unit normal vector field of $f$.
A frontal is called a {\it front\/} if $(f,\nu)$ is an immersion.
Let $f:(\R^2,0)\to(\R^3,0)$ be a frontal,
and $\nu$ a unit normal vector field of $f$.
We set 
\begin{equation}\label{eq:area-density}
\lambda=\det(f_u,f_v,\nu)
\end{equation}
by taking a coordinate system $(u,v)$,
with
$f_u=\partial f/\partial u$,
$f_v=\partial f/\partial v$.
We call $\lambda$ a {\it signed area density function}.
By the definition, 
$S(f)=\lambda^{-1}(0)$, where
$S(f)$ is the set of singularities of $f$.
A singular point $p$ of $f$ is
{said to be} {\it non-degenerate\/}
if $d\lambda(p)\ne0$.
If $p$ is a non-degenerate singular point,
then $S(f)$ near $p$ is a regular curve.
Let $p$ be a singular point satisfying $\rank df_p=1$,
then there exists a non-vanishing vector field $\eta$
on a neighborhood $U$ of $p$
such that $\langle \eta_q\rangle _{\R}=\ker df_q$
for $q\in S(f)\cap U$.
We call $\eta$ a {\it null vector field\/}.
We note that the notions of non-degeneracy and null vector field are 
introduced in \cite{KRSUY}.
We remark that a non-degenerate singular point satisfies
$\rank df_p=1$.
A non-degenerate singular point $p$ of $f$ is 
called
{\it first kind\/} (respectively, {\it second kind\/})
if $\eta_p$ is transverse to $S(f)$ at $p$
(respectively, $\eta_p$ is tangent to $S(f)$ at $p$).
It is well-known that a singular point of the first kind on a
front is a cuspidal edge 
(\cite[Proposition 1.3]{KRSUY}, see also \cite[Corollary 2.5]{SUYcamb}).
\subsection{Basic invariants for singular points of the first kind}
\label{sec:invs}
In \cite{SUY},
the singular curvature and the limiting normal curvature
are defined for cuspidal edges, namely
singular points of fronts of the first kind.
In \cite{MS,MSUY}, 
the cuspidal curvature and the cusp-directional torsion
are defined.
These definitions are also valid for singular points of
frontals of the first kind.

Let $f:(\R^2,0)\to(\R^3,0)$ be a frontal and $\nu$ a
unit normal vector field.
Let $0$ be a singular point of the first kind.
Taking a parametrization $\gamma:(\R,0)\to (\R^2,0)$
of $S(f)$,
the {\it singular curvature\/} $\kappa_s$ and
the {\it limiting normal curvature\/} $\kappa_\nu$
are defined by
$$
\kappa_s(t)=\sgn(d\lambda(\eta))
\dfrac{\det(\hat\gamma',\hat\gamma'',\nu\circ\gamma)}
{|\hat\gamma'|^3}(t),\quad
\kappa_\nu(t)=
\dfrac{\inner{\hat\gamma''}{\nu\circ\gamma}}
{|\hat\gamma'|^2}(t),
$$
respectively (\cite{SUY}), where $(\gamma',\eta)$ is
taken to be positively oriented.
Let $\xi$ be a vector field on $(\R^2,0)$ such that
$\xi_q$ is tangent to $S(f)$ at 
{each point} $q\in S(f)$,
and let $\eta$ be a null vector field.
Then the {\it cuspidal curvature\/} $\kappa_c$
and 
the {\it cusp-directional torsion\/} or
the {\it cuspidal torsion\/} $\kappa_t$ 
are defined by
\begin{align}
\label{eq:kappa-c}
\kappa_c(t)
&=
\frac{|\xi f|^{3/2} \det(\xi f,\ \eta^2 f,\
\eta^3 f)} {|\xi f\times\eta^2 f|^{5/2}}\Bigg|_{(u,v)=\gamma(t)},\\
\label{eq:kappa-t}
\kappa_t(t)
&=
\Biggl(
\frac{\det(\xi f,\,\eta^2 f,\,\xi\eta^2 f)}
{|\xi f\times\eta^2 f|^2} 
-
\frac{\det(\xi f,\,\eta^2 f,\,\xi^2 f)
\inner{\xi f}{\eta^2 f}}
{|\xi f|^2|\xi f\times\eta^2 f|^2}
\Biggr)
\Bigg|_{(u,v)=\gamma(t)},
\end{align}
where $\zeta^i f$ stands for the $i$'th order directional
derivative of $f$ by a vector field $\zeta$.
The invariant $\kappa_c$ measures a kind of ``wideness''
of the singularity. 
Furthermore, it is shown that $\kappa_\Pi=\kappa_c\kappa_\nu$
is an intrinsic invariant.
See Section \ref{sec:intrinsic} for the definition
of the intrinsicity and the extrinsicity of invariants.
See \cite{MSUY} for details.
One can easily see that
$\kappa_c(0)\ne0$ if and only if $f$ is a front at $0$.
It is known that for two cuspidal edges $f$ and $g$,
if their invariants
$\kappa_s,\kappa_s',\kappa_\nu,\kappa_\nu',\kappa_c,\kappa_t$ 
coincide at $0$,
then 
{there exists a coordinate system such that
3-jets $j^3f$ and $j^3g$ coincide at $0$} (\cite[Theorem 6.1]{MS}),
where ${}'=d/dt$ {and $t$ is an arclength parameter}.
In \cite{intcr, HHNSUY}, intrinsicities and extrinsicities 
of these invariants are investigated. 
See \cite{ot,fukui,ist} for another approach to investigating cuspidal edges,
and \cite{teramoto} for other applications of 
the above invariants (see also \cite{teramoto2}).

\subsection{Criterion and invariants for ${5/2}$-cuspidal edges}
{
First, we review the criterion for ${5/2}$-cuspidal edges
given in \cite[Theorem 4.1]{HKS}.
In order to do that,
we recall the following fact:

\begin{fact}[{\cite[Lemma 4.2]{HKS}}]
\label{fact:eta-tilde}
Let $f :(\R^2,0)\to (\R^3,0)$ be a frontal-germ
such that $0$ is a singular point of the first kind.
Let $\xi$, $\eta$ be vector fields on $(\R^2,0)$ 
such that the restriction $\xi|_{S(f)}$ is tangent to\/ $S(f)$,
and\/ $\eta$ is a null vector field.
Then there exists a null vector field $\tilde\eta$
such that 
\begin{equation}\label{eq:eta}
\inner{\xi f}{\tilde\eta^2 f}(0)=
\inner{\xi f}{\tilde\eta^3 f}(0)=0
\end{equation}
holds.
Moreover, if $\det(\xi f,\eta^2f,\eta^3 f)(0)=0$,
there exists the constant $l\in \R$
such that
\begin{equation}\label{eq:L}
\tilde\eta^3f(0)=l\, \tilde\eta^2f(0)
\end{equation}
holds.
\end{fact}

This fact is also obtained as a corollary of Lemma \ref{lem:eta-tilde}.}
Then the criterion for ${5/2}$-cuspidal edges
is given as follows:

\begin{proposition}[Criterion for ${5/2}$-cuspidal edges, 
{\cite[Theorem 4.1]{HKS}}]
\label{prop:criteria}
The frontal-germ\/ $f :(\R^2,0)\to (\R^3,0)$ is a\/
${5/2}$-cuspidal edge if and only if
\begin{enumerate}
\item\label{itm:etal} $\eta\lambda(0)\ne0$,
\item\label{itm:no23} 
$\det(\xi f,\eta^2f,\eta^3 f)=0$ on\/ $S(f)$,
\item\label{itm:25new}
$\det
(\xi f,\tilde\eta^2 f,3\tilde\eta^5f-10l\tilde\eta^4 f)
(0)\ne0$.
\end{enumerate}
Here, $\xi$ is a vector field on $(\R^2,0)$
such that the restriction $\xi|_{S(f)}$ is tangent to\/ $S(f)$,
and\/ $\eta$ is a null vector field.
Furthermore,
{$\tilde\eta$ is a null vector field 
and $l\in \R$ is the constant
given in Fact $\ref{fact:eta-tilde}$}.
\end{proposition}

The condition \ref{itm:etal}
implies that $0$ is a singular point of the first kind.
{
Moreover, by \cite[Proposition 3.11]{MSUY},
the condition \ref{itm:no23} implies 
that $f$ is not a front:

\begin{fact}[{\cite[Proposition 3.11]{MSUY}}]
\label{fact:front-condition}
Let $f :(\R^2,0)\to (\R^3,0)$ be a frontal-germ
such that $0$ is a singular point of the first kind.
Take vector fields $\xi$, $\eta$ on $(\R^2,0)$ 
such that the restriction $\xi|_{S(f)}$ is tangent to\/ $S(f)$,
and\/ $\eta$ is a null vector field.
Let $q\in S(f)$ be a singular point of the first kind.
Then,
$$\det(\xi f,\eta^2f,\eta^3 f)(q)\ne0$$ holds
if and only if 
$f$ is a front at $q\in S(f)$.
\end{fact}
}
\begin{proof}
{A frontal-germ $f$ at a non-degenerate singular point $q\in S(f)$ 
is a front if and only if $\eta\nu(q)=0$,
where $\eta$ is a null vector field and $\nu$ is
a unit normal vector field.}
Firstly we show that the condition
\ref{itm:no23} is equivalent to
$\det(\xi f,\nu,\eta\nu)=0$.
Since 
$\inner{\nu}{\xi f}=0$ and
$\inner{\nu}{\eta^2 f}=-\inner{\eta \nu}{\eta f}=0$,
we see that $\nu$ is parallel to $\xi f\times \eta^2 f$
on $S(f)$.
Thus
\begin{equation}\label{eq:onaji1}
\begin{array}{rcl}
|\xi f\times \eta\eta f|^2\det(\xi f,\nu,\eta\nu)
&=&
\det(\xi f,\xi f\times \eta^2 f,\eta(\xi f\times \eta^2 f))\\
&=&
\det(\xi f,\xi f\times \eta^2 f,
\eta\xi f\times \eta^2 f+\xi f\times \eta^3 f).
\end{array}
\end{equation}
{Since $[\eta,\xi]$ is a vector field and $df(T_p)$ is
generated by $\xi_p$ $(p\in S(f))$,
the derivative} $[\eta,\xi]f=\eta\xi f-\xi\eta f$ is parallel to
$\xi f$, and $\eta f=0$ on $S(f)$.
Since $\xi$ is tangent to $S(f)$, $\xi\eta f=0$ 
on $S(f)$.
Hence $\eta\xi f$ is parallel to $\xi f$.
Thus the left-hand side of \eqref{eq:onaji1} is
equal to
\begin{equation}\label{eq:onaji2}
\det(\xi f,\xi f\times \eta^2 f,
\xi f\times \eta^3 f)(t).
\end{equation}
Since
$\det(a,a\times b,a\times c)=|a|^2\det(a,b,c)$
for vectors $a,b,c\in\R^3$,
\eqref{eq:onaji2} is a non-zero multiple of
$
\det(\xi f,\eta^2 f,\eta^3 f)(t).
$
Thus
\ref{itm:no23} is equivalent to
$\det(\xi f,\nu,\eta\nu)=0$.
One can write $\eta\nu=\alpha \xi f+\beta\nu$.
Then $\beta=\inner{\eta\nu}{\nu}=0$.
On the other hand, 
$|\xi f|^2\alpha=\inner{\eta\nu}{\xi f}$.
Since $\inner{\nu}{\xi f}(u,v)\inner{\nu}{\eta f}(u,v)=0$ for any $(u,v)$,
it holds that 
$\inner{\eta\nu}{\xi f}(u,v)+\inner{\nu}{\eta\xi f}(u,v)=0$ 
$\inner{\xi\nu}{\eta f}(u,v)+\inner{\nu}{\xi\eta f}(u,v)=0$ 
for any $(u,v)$.
Since $[\eta,\xi]$ is a vector field, $[\eta,\xi]f$ is parallel to
$\xi f$ at $0$.
Thus $\inner{\nu}{[\eta,\xi]f}(0)=0$.
Hence we have
$|\xi f|^2\alpha
=\inner{\eta\nu}{\xi f}=-\inner{\nu}{\eta\xi f}
=-\inner{\nu}{\xi\eta f}=\inner{\xi\nu}{\eta f}=0$.
This completes the proof.
\end{proof}

{
To define the invariants of $5/2$-cuspidal edges,
we prepare the following lemma:

\begin{lemma}
\label{lem:eta-tilde}
Let $f :(\R^2,0)\to (\R^3,0)$ be a frontal-germ
such that $0$ is a singular point of the first kind.
Assume that each singular point $q\in S(f)$ is of the first kind.
Let $\gamma(t)$ be a parametrization of $S(f)$
such that $\gamma(0)=0$,
and let $\xi$ be a vector field on $(\R^2,0)$ 
such that the restriction $\xi|_{S(f)}$ is tangent to\/ $S(f)$.
Then, there exists a null vector field $\tilde\eta$
such that 
\begin{equation}\label{eq:eta_t}
\inner{\xi f}{\tilde\eta^2 f}(\gamma(t))=
\inner{\xi f}{\tilde\eta^3 f}(\gamma(t))=0
\end{equation}
holds along $\gamma(t)$.
Moreover, 
if $f$ is not a front at each $q\in S(f)$,
then there exists a function $l(t)$
such that
\begin{equation}\label{eq:L_t}
\tilde\eta^3f(\gamma(t))=l(t)\, \tilde\eta^2f(\gamma(t))
\end{equation}
holds along $\gamma(t)$.
\end{lemma}

\begin{proof}
We take a coordinate system $(u,v)$ satisfying
$S(f)=\{v=0\}$, $\eta=\partial_v$.
Set
\begin{equation}\label{eq:eta-alpha}
  \tilde{\eta}
  = \alpha(u,v)\partial_u+\partial_v\qquad
  (\alpha(u,v)=v(\alpha_1(u)+\alpha_2(u)v)),  
\end{equation}
where we set
$$
  \alpha_1(u) 
  = -\left.\frac{\inner{f_{vv}}{f_u}}{\inner{f_u}{f_u}}\right|_{v=0},
  \quad
  \alpha_2(u) 
  = -\left.\frac{3\alpha_1(u)\inner{f_{uv}}{f_u}
  +\inner{f_{vvv}}{f_u}}{2\inner{f_u}{f_u}}\right|_{v=0}.
$$
We can check that 
$\inner{f_u}{\tilde\eta^2f}(u,0)
=\inner{f_u}{\tilde\eta^3f}(u,0)=0$.
With respect to the second assertion,
we have 
$\det(\xi f,\tilde\eta^2f,\tilde\eta^3 f)=0$ along $S(f)$,
by Fact \ref{fact:front-condition}.
Hence, there exist functions $l(t)$, $\bar{l}(t)$
such that  
$$\tilde\eta^3 f(\gamma(t))
= l(t)\, \tilde\eta^2f(\gamma(t))
+\bar{l}(t)\, \xi f(\gamma(t)).$$
Since $\inner{\xi f}{\tilde\eta^3 f}(\gamma(t))=0$,
we have $\bar{l}(t)=0$.
\end{proof}

\begin{corollary}\label{cor:tilde-coord}
Let $f :(\R^2,0)\to (\R^3,0)$ be a frontal-germ
such that $0$ is a singular point of the first kind.
Assume that each singular point $q\in S(f)$ is of the first kind.
Then, there exists a coordinate system $(u,v)$
such that $S(f)=\{v=0\}$, and
$$
  f_v=0, \qquad
  \inner{f_u}{f_{vv}}
  =\inner{f_u}{f_{vvv}}=0
$$
along the $u$-axis.
\end{corollary}

\begin{proof}
We take a coordinate system $(u,v)$ satisfying
$S(f)=\{v=0\}$, $\eta=\partial_v$.
Set $\tilde{\eta}$ as in \eqref{eq:eta-alpha}.
Then there exists a coordinate system 
$(x,y)$
such that $x=u$
and $\partial_{y}$ is parallel to $\tilde{\eta}$.
Since 
$\inner{f_x}{\tilde\eta^2f}(x,0)
=\inner{f_x}{\tilde\eta^3f}(x,0)=0$,
we have 
$\inner{f_x}{f_{yy}}(x,0)
=\inner{f_x}{f_{yyy}}(x,0)=0$.
Hence $(x,y)$ is the desired coordinate system.
\end{proof}
}

Let us assume $S(f)$ is oriented, and let
$\xi$ be a vector field such that the restriction $\xi|_{S(f)}$
is tangent to $S(f)$ agreeing with the orientation of $S(f)$, and
let $\eta$ be a null vector field so that $(\xi,\eta)$ is
positively oriented.
We take a null vector field $\tilde\eta$ which
satisfies the condition \eqref{eq:eta}, and $(\xi,\tilde\eta)$ is
positively oriented.
Assuming $f$ is not a front at $0$, 
then {by Fact \ref{fact:eta-tilde},
there exists a number $l\in \R$ such that} 
$\tilde\eta^3f(0,0)=l \tilde\eta^2f(0,0)$.
We define two real numbers at $0$, respectively, by
\begin{align*}
r_b&=
\dfrac{|\xi f(0,0)|^2\det\Big(\xi f(0,0),\ 
\tilde\eta^2 f(0,0),\ \tilde\eta^4f(0,0)\Big)}
{|\xi f(0,0)\times \tilde\eta^2 f(0,0)|^{3}},\\
r_c&=
\dfrac{|\xi f(0,0)|^{5/2}\det\Big(
\xi f(0,0),\ \tilde\eta^2 f(0,0),\ 
3\tilde\eta^5 f(0,0)-10\,l\,\tilde\eta^4 f(0,0)
\Big)}
{|\xi f(0,0)\times \tilde\eta^2 f(0,0)|^{7/2}}.
\end{align*}
\begin{lemma}\label{lem:welldef}
The two real numbers\/ $r_b$, and\/ $r_c$ do not depend on 
the choices of\/ $\xi$
and\/ $\tilde\eta$.
\end{lemma}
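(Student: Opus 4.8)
The plan is to show that under any admissible change of the auxiliary vector fields, the quantities $r_b$ and $r_c$ are unchanged, by analyzing how the defining data transform. There are two sources of ambiguity in the choice of $(\xi,\tilde\eta)$: first, $\xi$ is only required to be tangent to $S(f)$ along $S(f)$ and positively oriented, so along $S(f)$ it may be rescaled and may pick up a multiple of $\eta$ off $S(f)$; second, $\tilde\eta$ is a null vector field constrained by the two orthogonality conditions \eqref{eq:eta}, so it may be rescaled and may be modified by multiples of $\xi$ whose coefficients vanish to suitable order along $S(f)$. First I would record the transformation laws for the relevant jets. Writing $\hat\xi = a\,\xi + (\text{terms vanishing on }S(f))$ and $\hat{\tilde\eta} = c\,\tilde\eta + (\text{correction})\,\xi$ with $a,c>0$ at $0$, I would compute $\hat\xi f(0,0)$, $\hat{\tilde\eta}^2 f(0,0)$, $\hat{\tilde\eta}^4 f(0,0)$, and $3\hat{\tilde\eta}^5 f(0,0)-10\hat l\,\hat{\tilde\eta}^4 f(0,0)$ in terms of the original data, keeping track of which lower-order terms land inside the determinant and hence drop out by multilinearity and antisymmetry.

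The key structural input is that along $S(f)$ we have $\eta f = 0$, so $\tilde\eta f(0,0)=0$, and therefore the first nonvanishing transverse derivative is $\tilde\eta^2 f(0,0)$. I would exploit this repeatedly: any correction term added to $\tilde\eta$ that carries a factor vanishing on $S(f)$ contributes, upon taking powers and evaluating at $0$, only multiples of $\xi f(0,0)$ and $\tilde\eta^2 f(0,0)$ to the vectors $\tilde\eta^4 f(0,0)$ and $\tilde\eta^5 f(0,0)$. Since these same two vectors already occupy the first two slots of each determinant, all such corrections vanish. Likewise the rescalings by $a$ and $c$ produce overall positive scalar factors: $\xi f$ scales by $a$, $\tilde\eta^2 f$ by $c^2$, $\tilde\eta^4 f$ by $c^4$, and the combination $3\tilde\eta^5 f - 10 l\,\tilde\eta^4 f$ by $c^5$ (here I must check that $l$ transforms compatibly, namely $\hat l = c\,l$, which follows from $\hat{\tilde\eta}^3 f(0,0) = c^3\,\tilde\eta^3 f(0,0) = c^3 l\,\tilde\eta^2 f(0,0) = (c\,l)\,\hat{\tilde\eta}^2 f(0,0)$). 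Substituting these scalings into the numerators and denominators, I would verify that the powers match exactly: for $r_b$ the numerator scales by $a^2\cdot a\cdot c^2\cdot c^4 = a^3 c^6$ and $|\xi f\times\tilde\eta^2 f|^3$ scales by $(a c^2)^3 = a^3 c^6$; for $r_c$ the numerator scales by $a^2\cdot a\cdot c^2\cdot c^5 = a^3 c^7$ while $|\xi f\times\tilde\eta^2 f|^{7/2}$ scales by $(a c^2)^{7/2}=a^{7/2}c^7$, which forces me to re-examine the homogeneity and confirm the $|\xi f|^{5/2}$ factor in $r_c$ supplies the missing $a^{1/2}$, so that both quotients are genuinely invariant.

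The main obstacle I anticipate is the bookkeeping for $r_c$ at fifth order: computing $\hat{\tilde\eta}^5 f(0,0)$ requires expanding a fifth iterated directional derivative of $f$ along a vector field that itself has a nontrivial $v$-dependent correction, and one must be careful that the correction term's coefficient (which vanishes on $S(f)$) and its derivatives conspire so that only admissible lower-order vectors appear. The constraints \eqref{eq:eta}, which pin down $\tilde\eta$ up to exactly the freedom described above, are precisely what guarantee that no new independent vector enters $\tilde\eta^5 f(0,0)$ modulo $\operatorname{span}\{\xi f(0,0),\tilde\eta^2 f(0,0)\}$; verifying this cancellation carefully is where the real work lies. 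Once the transformation laws are established, the invariance of $r_b$ and $r_c$ is immediate from the scaling count, completing the proof.
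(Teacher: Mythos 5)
Your overall strategy---writing the new pair as $\hat\xi = a\,\xi + \cdots$, $\hat{\tilde\eta} = c\,\tilde\eta + \rho\,\xi$, computing transformation laws for the iterated derivatives at $0$, and letting multilinearity of the determinant absorb everything lying in $\operatorname{span}\{\xi f(0),\tilde\eta^2 f(0)\}$---is essentially the paper's strategy (the paper carries it out in an adapted coordinate system in which one admissible pair is $(\partial_u,\partial_v)$, writing a general pair as $\xi=\alpha_1\partial_u+\alpha_2\partial_v$, $\overline\eta=\alpha_3\partial_u+\alpha_4\partial_v$). Your treatment of $r_b$ and of the $\xi$-corrections is sound. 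However, there is a genuine error exactly at the step you flag as the ``key check'': the transformation law for $l$. You claim $\hat l = c\,l$, deduced from $\hat{\tilde\eta}^3 f(0,0)=c^3\tilde\eta^3 f(0,0)$. That identity is false when the rescaling factor $c$ is a non-constant function---and the admissible freedom does include multiplication of $\tilde\eta$ by an arbitrary positive function, since the conditions \eqref{eq:eta} survive such a rescaling (the correction terms are multiples of $\tilde\eta f(0)=0$ and $\tilde\eta^2 f(0)\perp \xi f(0)$). Differentiating $c\,\tilde\eta$ three times and evaluating at $0$ gives
$$
\hat{\tilde\eta}^3 f(0) = c^3\,\tilde\eta^3 f(0) + 3c^2(\tilde\eta c)(0)\,\tilde\eta^2 f(0),
\qquad\text{hence}\qquad
\hat l = c\,l + 3\,\tilde\eta c ,
$$
which is precisely the paper's formula $\overline\eta^3 f=\bigl(3(\alpha_4)_v+\alpha_4 l\bigr)\overline\eta^2 f$.

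This is not a removable bookkeeping slip: the invariance of $r_c$ hinges on it. Modulo $\operatorname{span}\{\xi f(0),\tilde\eta^2 f(0)\}$ one has
$$
\hat{\tilde\eta}^5 f(0) \equiv c^5\,\tilde\eta^5 f(0) + 10\,c^4(\tilde\eta c)\,\tilde\eta^4 f(0)
$$
(the paper's $\overline\eta^5 f=\alpha_4^4\bigl(10(\alpha_4)_v f_{vvvv}+\alpha_4 f_{vvvvv}\bigr)$), so the combination
$3\hat{\tilde\eta}^5 f-10\,\hat l\,\hat{\tilde\eta}^4 f$ reduces to $c^5\bigl(3\tilde\eta^5 f-10\,l\,\tilde\eta^4 f\bigr)$ only because the term $30c^4(\tilde\eta c)\tilde\eta^4 f$ coming from $\hat{\tilde\eta}^5 f$ cancels against the term $-30(\tilde\eta c)c^4\tilde\eta^4 f$ coming from the correct $\hat l$; the coefficients $3$ and $10$ in the definition of $r_c$ exist precisely to force this cancellation. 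With your law $\hat l=c\,l$, the leftover $30\,c^4(\tilde\eta c)\,\tilde\eta^4 f(0)$ survives in the third slot of the determinant; it is not in $\operatorname{span}\{\xi f(0),\tilde\eta^2 f(0)\}$ in general (its contribution to the determinant is proportional to $r_b$), so your scaling count breaks down and invariance of $r_c$ is not established. Alternatively, if you intended $c$ to be constant, you have only checked a proper subfamily of the allowed changes of $\tilde\eta$, which does not prove the lemma. The fix is the corrected law displayed above; with it, your outline closes and coincides with the paper's computation.
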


\begin{proof}
We take the coordinate system $(u,v)$ 
{given in Corollary \ref{cor:tilde-coord}.}
We set
$$
\xi=\aaa(u,v)\partial_u+\ab(u,v)\partial_v,\quad
\overline\eta=\ac(u,v)\partial_u+\ad(u,v)\partial_v,
$$
where $\alpha_i(u,v)$ $(i=1,2,3,4)$ 
is a smooth function such that $\aaa,\ad>0, \ac(u,0)=0$.
By the assumption \eqref{eq:eta},
$(\ac)_v=(\ac)_{vv}=0$ holds on the $u$-axis.
By a straightforward calculation,
$$
\begin{array}{rcl}
\xi f&=&\aaa f_u,\\
\overline\eta^2f&=&\ad^2 f_{vv}\\
\overline\eta^3f
&=&
\ad^2 (3 (\ad)_v f_{vv}+\ad f_{vvv}),\\
\overline\eta^4f
&=&
\ad^4f_{vvvv}+f_u* +f_{vv}*
\end{array}
$$
hold on the $u$-axis.
Thus 
$$
\dfrac{|f_{\xi}|^2\det\Big(
\xi f,\ \overline\eta^2f,\ 
\overline\eta^4 f\Big)}
{|\xi f\times \overline\eta^2 f|^3}
=
\dfrac{\det\Big(
\aaa f_u,\ \ad^2 f_{vv},\ \ad^4f_{vvvv}\Big)}
{|\aaa f_u||\ad^2 f_{vv}|^3}
=
\dfrac{\det(
f_u,\ f_{vv},\ f_{vvvv})}
{|f_u||f_{vv}|^3}
$$
holds at $0$, which shows the independence of $r_b$.
By the above calculation, 
if $f_{vvv}=lf_{vv}$, then
$\overline\eta^3f=(3(\ad)_v+\ad l)\overline\eta^2f$.
Moreover, we see
$$
\overline\eta^5f=
\ad^4 (10 (\ad)_v f_{vvvv}+\ad f_{vvvvv}).
$$
Hence, 
{\allowdisplaybreaks
\begin{align*}
&\dfrac{|\xi f|^{5/2}\det\Big(
\xi f,\ \overline\eta^2f,\ 
3\overline\eta^5f-10 l \overline\eta^4f\Big)}
{|\xi f\times \overline\eta^2f|^{7/2}}\\[5mm]
=&
\dfrac{\det\Big(
\aaa f_u,\ \ad^2 f_{vv},\ 
3\ad^4 (10 (\ad)_v f_{vvvv}+\ad f_{vvvvv})-10 (3(\ad)_v+\ad l) \ad^4f_{vvvv}\Big)}
{|\aaa f_u||\ad^2 f_{vv}|^{7/2}}\\[5mm]
=&
\dfrac{\det\Big(
f_u,\  f_{vv},\ 
3 f_{vvvvv}-10 l f_{vvvv}\Big)}
{|f_u||f_{vv}|^{7/2}}
\end{align*}
}%
holds at $0$, which shows the independence of $r_c$.
\end{proof}

\begin{definition}
{Let $f :(\R^2,0)\to (\R^3,0)$ be a frontal-germ
such that $0$ is a singular point of the first kind.}
Assume that each singular point is of the first kind 
and is not a front. 
For example, ${5/2}$-cuspidal edges satisfy this assumption.
Then, {by Lemma \ref{lem:eta-tilde}, we have}
$\tilde\eta^3f(\gamma(t))=l(t) \tilde\eta^2f(\gamma(t))$,
where $\gamma(t)$ is a parametrization of $S(f)$,
and $\tilde\eta$ is a null vector field satisfying
$\inner{\xi f}{\tilde\eta^2 f}(\gamma(t))=
\inner{\xi f}{\tilde\eta^3 f}(\gamma(t))=0$.
Then we define $r_b(t)$, $r_c(t)$ as
\begin{align}
r_b(t)&=
\dfrac{|\xi f|^2\det\Big(\xi f,\ 
\tilde \eta^2f,\ \tilde\eta^4f\Big)}
{|\xi f\times \tilde\eta^2f|^{3}}\Bigg|_{(u,v)=\gamma(t)},\\
\label{eq:kappa-c-ni}
r_c(t)&=
\dfrac{|\xi f|^{5/2}\det\Big(
\xi f,\ \tilde\eta^2f,\ 
3\tilde\eta^5f-10\,l\,\tilde\eta^4f
\Big)}
{|\xi f\times \tilde\eta^2f|^{7/2}}\Bigg|_{(u,v)=\gamma(t)},
\end{align}
respectively.
The invariant $r_b(t)$ is called the {\it bias},
and $r_c(t)$ is called the {\it secondary cuspidal curvature}.
We also define
$$
r_\Pi(p):=\kappa_{\nu}(p)r_c(p)
$$
for a singular point $p$,
which is called the {\it secondary product curvature}.
\end{definition}

In \cite{OSaji}, 
the bias $r_b$ and the secondary cuspidal curvature $r_c$ 
are used to investigate 
the cuspidal cross caps.

\subsection{Geometric meanings}

Here we study geometric meanings of
the invariants $r_b$ and $r_c$.

{
Let $f:(\R^2,0)\to(\R^3,0)$ be a frontal 
and $0$ a singular point of the first kind.
Moreover, let $\gamma(t)$ $(\gamma(0)=0)$ 
be a parametrization of $S(f)$,
and we set $\hat{\gamma}(t):=f(\gamma(t))$.
Since $0$ is a singular point of the first kind,
$\hat{\gamma}'(0)\ne0$, 
{where ${}'=d/dt$}.
Denote by $\Pi_f$ 
the normal plane $( \hat{\gamma}'(0))^\perp$ 
of $ \hat{\gamma}'(0)$ passing through $0$.
We call $\Pi_f$ 
the {\it normal plane} of $f$ passing through $0$.

\begin{proposition}
Let $f:(\R^2,0)\to(\R^3,0)$ be a frontal 
and $0$ a singular point of the first kind.
Assume that $f$ is not a front.
Let $r_b$ 
{\rm (}respectively, $r_c${\rm )}
be the bias 
{\rm (}respectively, the secondary cuspidal curvature{\rm )} 
of the frontal $f$ at $0$.
Denote by $\Pi_f$ 
the normal plane of $f$ passing through $0$.
Then, 
\begin{itemize}
\item
the slice of $f$ by the normal plane $\Pi_f$
is an image of an A-type map-germ $\hat c:(\R,0) \to (\Pi_f,0)$.
Moreover,
\item
if we denote by 
$b(\hat c,0)$
{\rm (}respectively, $\omega_r(\hat c,0)${\rm )} 
the bias of cusps
{\rm (}respectively, the secondary cuspidal curvature{\rm )} 
of $\hat c$ at $0$ as a plane curve in $\Pi_f$,
then we have
$$
  r_b = b(\hat c,0),\qquad
  r_c = \omega_r(\hat c,0).
$$
\end{itemize}
\end{proposition}

\begin{proof}
Let $\gamma(t)$ $(\gamma(0)=0)$ 
be a parametrization of $S(f)$,
and we set $\hat{\gamma}(t):=f(\gamma(t))$.
The slice of $f$ by the normal plane $\Pi_f=( \hat{\gamma}'(0))^\perp$
is given by
$$
C=\{(u,v)\,;\,\inner{f(u,v)}{ \hat{\gamma}'(0)}=0\},
$$
where ${}'=d/dt$.
We take a coordinate system satisfying 
$S(f)=\{v=0\}$, $\eta=\partial_v$ and
$\inner{f_u}{f_{vv}}=\inner{f_u}{f_{vvv}}=0$ on $S(f)$
(Corollary \ref{cor:tilde-coord}).
Then
we see that $\inner{f(u,v)}{ \hat{\gamma}'(0)}_u\ne0$ at $0$.
Thus we can take a parametrization of $C$
as $c(v)=(c_1(v),v)$. We set $\hat c=f\circ c$.
We remark that since $\inner{\hat c(v)}{\hat\gamma'(0)}=0$,
it holds that $c_1'(0)=0$. Furthermore, since $f_v(u,0)=0$,
it holds that $f_{uv}(u,0)=f_{uuv}(u,0)=f_{uuuv}(u,0)=0$.
Then we have
$$
\hat c''(0)=f_{vv}(0,0)+c_1''(0)f_u(0,0),\quad
\hat c'''(0)=f_{vvv}(0,0)+c_1'''(0)f_u(0,0).
$$
Since $\inner{\hat c(v)}{\hat\gamma'(0)}=0$,
it holds that 
$c_1''(0)=-\inner{f_u(0,0)}{f_{vv}(0,0)}=0$
and
$c_1'''(0)=-\inner{f_u(0,0)}{f_{vvv}(0,0)}=0$.
Furthermore, since
$$
\hat c^{(4)}(0)=f_{vvvv}(0,0)+c_1^{(4)}(0)f_u(0,0),\quad
\hat c^{(5)}(0)=f_{vvvvv}(0,0)+c_1^{(5)}(0)f_u(0,0),
$$
we see that
\begin{align*}
b(\hat c,0)&=\dfrac{\det(f_u,f_{vv},f_{vvvv})}
{|f_{vv}|^3}(0,0)=r_b,\\
\omega_r(\hat c,0)&=
\dfrac{\det(f_u,f_{vv},3f_{vvvvv}-10lf_{vvvv})}
{|f_{vv}|^{7/2}}(0,0)=r_c.
\end{align*}
\end{proof}
}

\subsection{Normal form for ${5/2}$-cuspidal edges}
In \cite{MS}, a normal form for cuspidal edges
is given. 
We have the following.
\begin{proposition}\label{prop:normalform}
Let\/ $f:(\R^2,0)\to(\R^3,0)$ be a\/ ${5/2}$-cuspidal edge.
Then there exist a coordinate system\/ $(u,v)$ on\/ $(\R^2,0)$
and an isometry\/ $\Phi:(\R^3,0)\to (\R^3,0)$ such that
\begin{multline}\label{eq:normalform}
\displaystyle
\Phi\circ f(u,v)
=
\Bigg(
u,\ \sum_{i=2}^5 \dfrac{a_i}{i!}u^i+\dfrac{v^2}{2},
\displaystyle
\sum_{i=2}^5 \dfrac{b_{i0}}{i!}u^i\\
 + 
\sum_{i=1}^3 \dfrac{b_{i2}}{i!}u^iv^2
+
\dfrac{b_{14}}{4!}uv^4
+
\sum_{i=4}^5 \dfrac{b_{0i}}{i!}v^i
\Bigg)
+h(u,v),
\end{multline}
where\/ 
{$a_i\in\R$ $(i=2,\ldots,5)$,
$b_{ij}\in\R$ $(i+j\leq5)$
are constants satisfying\/ $b_{05}\ne0$, and}\/
$h(u,v)$ consists of the terms
whose degrees are greater than or equal to\/ $6$, 
of the form
$$\big(0,\ u^6h_1(u),\ u^6h_2(u)+u^4v^2h_3(u)
+u^2v^4h_4(u)
+ uv^5h_5(u)+v^6h_6(u,v)\big).$$
\end{proposition}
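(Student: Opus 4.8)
The plan is to reach \eqref{eq:normalform} by a sequence of source coordinate changes, each preserving the singular structure, interlaced with a single ambient isometry $\Phi$ that fixes the frame at the origin, exactly as in the cuspidal edge normal form of \cite{MS}. First I would pass to the adapted coordinates used in the proof of Lemma \ref{lem:welldef}: since condition \ref{itm:etal} makes $0$ a singular point of the first kind, there is a coordinate system $(u,v)$ with $S(f)=\{v=0\}$ and null vector field $\eta=\partial_v$, so that $f_v(u,0)\equiv 0$; consequently the Taylor expansion of every component contains no monomial $u^k v$. After translating so that $f(0,0)=0$, I would rotate so that $f_u(0,0)$ is a positive multiple of $\e_1$, thereby normalizing the tangent direction of the singular image.

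Next I would arrange that the first component equals $u$. Because $f_v(u,0)\equiv0$ forces $f^1_v(u,0)\equiv0$, the map $(u,v)\mapsto(f^1(u,v),v)$ is a local diffeomorphism (as $f^1_u(0,0)>0$) which fixes $\{v=0\}$ and restricts to $\partial_v$ along it; in the new coordinates $f^1=u$ identically, hence $f^1_{vv}\equiv0$ and $f_{vv}$ has vanishing first component everywhere. A rotation in the $\e_2\e_3$-plane (fixing $\e_1$, hence preserving $f^1=u$) then aligns $f_{vv}(0,0)$ with $\e_2$; since $\inner{\nu}{f_u}=0$ always and $\inner{\nu}{f_{vv}}=0$ on $S(f)$ (as in Lemma \ref{lem:no23nofront}), we get $\nu(0)=\pm\e_3$, which I make $+\e_3$ by a reflection if necessary.

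I would then straighten the second component. Writing $g(u,v)=f^2(u,v)-f^2(u,0)$, we have $g(u,0)=g_v(u,0)=0$ and $g_{vv}(0,0)>0$, so applying Hadamard's lemma twice gives $g=\tfrac{v^2}{2}\psi$ with $\psi$ smooth and $\psi(0,0)>0$; the substitution $\hat v=v\sqrt{\psi}$ is a coordinate change fixing $u$ and $\{v=0\}$, after which $f^2=A(u)+\tfrac{v^2}{2}$ exactly, where $A(u)=f^2(u,0)$. In these coordinates $f^2_{vvv}\equiv0$ and $f^1_{vv}=f^1_{vvv}\equiv 0$, so condition \ref{itm:no23}, namely $\det(f_u,f_{vv},f_{vvv})=0$ on $S(f)$, collapses to the upper-triangular determinant $f^3_{vvv}(u,0)=0$: the third component carries no $v^3$-monomial at all. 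Finally I would read off \eqref{eq:normalform} from the Taylor expansion. The constraints now in force are: no $u^k v$ in any component; $f^1=u$; $f^2=A(u)+\tfrac{v^2}{2}$ with $A=O(u^2)$; and $f^3$ with no $v^3$-monomials, $f^3(u,0)=O(u^2)$, and $v^2$-coefficient $O(u)$ (because $f_u(0,0)=\e_1$ and $f_{vv}(0,0)=\e_2$). Assigning weight $1$ to both $u$ and $v$, the surviving monomials of weighted degree $\le5$ are precisely those displayed, while the degree-$\ge6$ remainder organizes by $v$-power into $u^6h_1,\ u^6h_2,\ u^4v^2h_3,\ u^2v^4h_4,\ uv^5h_5,\ v^6h_6$, with no $v^1$- or $v^3$-row present.

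The main obstacle is the compatibility bookkeeping rather than any single hard estimate: I must verify that $\hat u=f^1$ and the straightening $v\mapsto\hat v$ each preserve $S(f)=\{v=0\}$, the null direction along it, and the frame normalization at $0$ achieved earlier, and that the division producing $\psi$ is genuinely smooth with $\psi(0,0)>0$. Once these compatibilities are checked, together with the geometric (hence coordinate-independent) nature of condition \ref{itm:no23}, the identification of the explicit monomials and of the remainder structure $h$ is routine.
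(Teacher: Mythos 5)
Your proof is correct and takes essentially the same route as the paper's own argument (Proposition \ref{prop:normalformf} in Appendix \ref{sec:coord}): adapted coordinates with $S(f)=\{v=0\}$ and $\eta=\partial_v$, normalizing the first component via $(u,v)\mapsto (f^1(u,v),v)$, an ambient isometry fixing the frame at the origin, a Hadamard-plus-square-root rescaling of $v$ to straighten the second component, the collapse of condition \ref{itm:no23} to $f^3_{vvv}(u,0)=0$ (killing all $v^3$-monomials), and a final Taylor expansion. The only differences are cosmetic: the paper applies the ambient isometry at the outset and leaves the sign of $v^2/2$ and the final expansion implicit, while you interleave the rotations and handle the sign explicitly.
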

Although this proposition can be shown by the same method
as in the proof of \cite[Theorem 3.1]{MS},
we give a proof in Appendix \ref{sec:coord} for the sake of
completeness.
Under this normal form, the
invariants defined above can be computed
as
\begin{itemize}
\item
$(\kappa_\nu(0),\kappa_\nu'(0),\kappa_\nu''(0),\kappa_\nu'''(0))
=
\Big(\mbox{\fbox{$b_{20}$}},\,
\mbox{\fbox{$b_{30}$}} -2 a_2 b_{12},\,
\mbox{\fbox{$b_{40}$}}
-4 a_3 b_{12}
-2 a_2^2 b_{20}
-2 a_2 b_{22}
-3 b_{20}^3
-4 b_{12}^2 b_{20},\,
\mbox{\fbox{$b_{50}$}}
+14 a_2^3 b_{12}
-7 a_2^2 b_{30}
-6 a_4 b_{12}-6 a_3 b_{22}-12 b_{12} b_{20} b_{22}
-12 b_{12}^2 b_{30}-19 b_{20}^2 b_{30}
+a_2 (-6 a_3 b_{20}-2 b_{32}+24 b_{12}^3+32 b_{20}^2 b_{12})
\Big)$,
\item
$(\kappa_s(0),\kappa_s'(0),\kappa_s''(0),\kappa_s'''(0))
=
\Big(
\mbox{\fbox{$a_2$}},\,
\mbox{\fbox{$a_3$}}
+2 b_{12} b_{20},\,
\mbox{\fbox{$a_4$}}
-4 a_2 (b_{12}^2+b_{20}^2)
+2 b_{20} b_{22}
+4 b_{12} b_{30}
-3 a_2^3,\,
\mbox{\fbox{$a_5$}}
-a_2^2 (8 b_{12} b_{20}+19 a_3)
-2 a_3 (6 b_{12}^2+5 b_{20}^2)
-3 a_2 (4 b_{12} b_{22}+5 b_{20} b_{30})
-24 b_{20} b_{12}^3
+2 b_{12}(3 b_{40}-13 b_{20}^3) 
+6 b_{22} b_{30}+2 b_{20} b_{32}
\Big)$,
\item
$(\kappa_t(0),\kappa_t'(0),\kappa_t''(0))
=
\Big(
\mbox{\fbox{2$b_{12}$}},\,
\mbox{\fbox{$2 b_{22}$}}
-a_2 b_{20},\,
\mbox{\fbox{$2b_{32}$}}
+4 a_2^2 b_{12}
-a_3 b_{20}
-2 a_2 b_{30}
-16 b_{12}^3
-8 b_{20}^2 b_{12}
\Big)$,
\item
$(r_b(0),\, r_b'(0))
=
\Big(
\mbox{\fbox{$b_{04}$}},\,
\mbox{\fbox{$b_{14}$}}-12 a_2 b_{12}
\Big)$,
\item
$r_c(0)
=\mbox{\fbox{$3b_{05}$}}$,
\end{itemize}
and $\kappa_c\equiv0$,
where the prime
means differentiation 
with respect to the {arclength} parameter of $\hat\gamma$.
Looking at the boxed entries, we have the following proposition.
\begin{proposition}
Let\/ $f,g$ be germs of ${5/2}$-cuspidal edges.
If their invariants\/
$\kappa_\nu$,
$\kappa_\nu'$,
$\kappa_\nu''$,
$\kappa_\nu'''$,
$\kappa_s$, $\kappa_s'$, $\kappa_s''$, $\kappa_s'''$,
$\kappa_t$, $\kappa_t'$, $\kappa_t''$,
$r_b$, $r_b'$, $r_c$ 
coincide at\/ $0$,
then there exist a coordinate system\/ $(u,v)$
and an isometry\/ $A$ of\/ $\R^3$ 
such that 
$$
j^5_0f(u,v)=j^5_0(A\circ g)(u,v),
$$
where\/ $j^5_0f(u,v)$ stands for
the\/ $5$-jet of\/ $f$ with respect to\/ $(u,v)$ at\/ $0$.
\end{proposition}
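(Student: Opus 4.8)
The plan is to combine the normal form of Proposition~\ref{prop:normalform} with the coefficient formulas displayed just above it, and to observe that together they express the fourteen listed invariants as a triangular function of the fourteen free coefficients of the $5$-jet of the normal form. Equality of the invariants will then force equality of the coefficients, hence of the $5$-jets.

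First I would apply Proposition~\ref{prop:normalform} to $f$ and to $g$ separately, obtaining coordinate systems on the source and isometries $\Phi_f,\Phi_g$ on the target so that $\Phi_f\circ f$ and $\Phi_g\circ g$, written in these coordinates, take the form \eqref{eq:normalform}. Since the remainder $h$ collects only monomials of degree $\ge 6$, the $5$-jet of each normalized map is determined by exactly the fourteen numbers
$$a_2,a_3,a_4,a_5,\ b_{20},b_{30},b_{40},b_{50},\ b_{12},b_{22},b_{32},\ b_{14},\ b_{04},b_{05},$$
and no other monomial of degree $\le 5$ occurs. Because $\kappa_\nu,\kappa_s,\kappa_t,r_b,r_c$ and their recorded derivatives are invariant under isometry and change of coordinate, their values for $f$ (resp.\ $g$) agree with the boxed expressions evaluated on the coefficients of $\Phi_f\circ f$ (resp.\ $\Phi_g\circ g$).

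The core of the argument is to read the fourteen boxed formulas as a lower-triangular system. I would grade the coefficients by the order (number of derivatives) of the invariant whose boxed term isolates them: order $0$ yields $a_2,b_{20},b_{12},b_{04},b_{05}$ through $\kappa_s(0),\kappa_\nu(0),\kappa_t(0),r_b(0),r_c(0)$; order $1$ yields $a_3,b_{30},b_{22},b_{14}$; order $2$ yields $a_4,b_{40},b_{32}$; and order $3$ yields $a_5,b_{50}$. In each formula the boxed leading term is a nonzero constant ($1$, $2$, or $3$) times one coefficient of the current order, while every remaining monomial involves only coefficients of strictly lower order. Hence the map from the fourteen coefficients to the fourteen invariants is triangular with nonvanishing diagonal, so it is invertible: solving order by order shows that equality of all fourteen invariants of $f$ and $g$ implies equality of all fourteen coefficients.

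With the coefficients equal, the $5$-jets of the two normalized maps coincide, and it remains to transport this back to $f$ and $g$. Composing the source coordinate changes and the target isometries produces a coordinate system $(u,v)$ and an isometry $A$ (built from $\Phi_f^{-1}\circ\Phi_g$) with $j^5_0 f(u,v)=j^5_0(A\circ g)(u,v)$; here one uses that the $5$-jet of $\Phi\circ(\,\cdot\,)\circ\psi$ depends only on the $5$-jet of the middle map when $\Phi$ is an isometry fixing the origin and $\psi$ is an origin-preserving diffeomorphism. The only genuine obstacle is confirming the triangularity claim, and this is a finite inspection: for each order $k$ one checks that no non-leading monomial of an order-$k$ invariant contains a coefficient of order $\ge k$. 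I would record this as a short table rather than recomputing the invariants, since the formulas themselves are already tabulated above. (Note that $\kappa_c\equiv 0$ carries no information and is correctly absent from the list.)
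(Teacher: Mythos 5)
Your proposal is correct and follows essentially the same route as the paper: the paper's own proof consists of the remark ``Looking at the boxed entries, we have the following proposition,'' i.e., precisely the observation that the fourteen boxed formulas form a triangular system (each boxed leading coefficient appears with a nonzero constant factor, and all remaining terms involve only lower-order coefficients), so equality of the invariants forces equality of the fourteen coefficients of the normal form \eqref{eq:normalform} and hence of the $5$-jets up to an isometry and a coordinate change. Your write-up merely makes explicit the order-by-order inversion and the transport back through $\Phi_f$, $\Phi_g$, which the paper leaves implicit.
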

Moreover, a parametrization of 
$f(S(f))$ as a space curve is given by $f(u,0)$.
Since $b_{04},b_{14},b_{05}$ do not appear in
$f(u,0)$, they also do not appear in
the curvature $\kappa$ and the torsion $\tau$ of $f(u,0)$.
Thus we believe that the invariants $r_b,r_c$ for 
${5/2}$-cuspidal edges were not 
{paid attention to} before.

\subsection{Invariants of ${5/2}$-cuspidal edges on conjugate surfaces}
\label{sec:conjugate-Delaunay}
We denote by $\R^3_1$ the Lorentz-Minkowski $3$-space 
with signature $(-,+,+)$.
A {\it spacelike Delaunay surface with axis $\ell$}
is a surface in $\R^3_1$ such that
the first fundamental form (that is, the induced metric) is positive definite,
it is of constant mean curvature ({\it CMC}, for short), and
it is invariant under the action of the group of motions in $\R^3_1$
which fixes each point of the line $\ell$.
Such spacelike Delaunay surfaces 
are classified and they have 
{\it conelike singularities} (see \cite{Honda}, for details).

As in the case of CMC surfaces in $\R^3$,
for a given (simply-connected) spacelike CMC surface in $\R^3_1$,
there exists a spacelike CMC surface called the {\it conjugate}.
Any conjugate surface of a spacelike Delaunay surface
is a spacelike {\it helicoidal} CMC surface\footnote{%
A {\it helicoidal surface} is a surface 
which is invariant under 
a non-trivial one-parameter subgroup 
of the isometry group of $\R^3_1$.},
and it is shown in \cite{HKS} 
that such spacelike helicoidal CMC surfaces
have ${5/2}$-cuspidal edges.
We remark that 
spacelike zero-mean-curvature surfaces
(i.e., {\it maximal} surface)
never admit ${5/2}$-cuspidal edges 
(cf.\ \cite{HKS}, see also \cite{UY_hokudai}).

In this section, we compute 
the invariants $r_b$ and $r_c$ of ${5/2}$-cuspidal edges
on such spacelike helicoidal CMC surfaces, 
regarding them as surfaces in $\R^3$.
More precisely, 
setting 
\begin{equation}\label{eq:delta}
  \delta(u) = \left(u^2+k+1\right)^2-4 k,
\end{equation}
a non-totally-umbilical spacelike Delaunay surface with timelike axis
is given by
$$
  f_{\rm Del}(u, v) = \frac1{2H}\left(
    \int_{0}^u \frac{\tau^2+k-1}{\sqrt{\delta(\tau)}} d\tau,\, 
    u \cos (2Hv),\, u \sin (2Hv) 
  \right)
$$
for some constant $k\in \R$ $(k\ne1)$,
where $H$ is the mean curvature
(see \cite{HKS} for more details).
Let $f : (\R^2,0) \to (\R^3_1,0)$ 
be a spacelike helicoidal CMC surface
which is given as a conjugate surface 
of the Delaunay surface $f_{\rm Del}$.
Setting $\Delta(u):=\delta(u)-u^4$, 
such an $f$ can be written as follows (cf.\ \cite{HKS}):
\begin{enumerate}
\item
If $-1<k<1$ or $1<k$, then $f$ is congruent to 
\begin{equation}\label{eq:t-conj}
  f_{{T}}(u, v) 
     = \left( \psi +\frac{1-k}{2H (1+k)}\, \phi,\, \rho \cos \phi,\, \rho \sin \phi \right),
\end{equation}
where 
{\allowdisplaybreaks
\begin{align*}
  &\rho(u) := \frac{\sqrt{\Delta (u)}}{2 H (k+1)},\quad
   \psi (u) := 
     \int_{0}^u \frac{\sqrt{ 2(1+k) }\, \tau^4}{
                  H \sqrt{\delta (\tau)} \Delta (\tau)}d\tau,\\
  &\phi(u,v) := 
      \int_{0}^u  \frac{\sqrt{ 2(1+k) } (1-k) \tau^2}{
      \sqrt{\delta (\tau)} \Delta (\tau)}  d\tau
       - \sqrt{\frac{1+k}{2}}v.
\end{align*}}
\item
If $k<-1$, then $f$ is congruent to 
\begin{equation}\label{eq:s-conj}
    f_{{S}}(u, v) = 
    \left(\rho \sinh \phi,\, \rho \cosh \phi,\,  \psi +\frac{k-1}{2H (1+k)}\, \phi\right),
\end{equation}
where 
{\allowdisplaybreaks
\begin{align*}
  &\rho(u) 
     := -\frac{\sqrt{\Delta (u)}}{2 H (1+k)} ,\quad
   \psi (r) := 
     \int_{0}^u \frac{\sqrt{ 2(-k-1) }\, \tau^4}{
     H \sqrt{\delta (\tau)} \Delta (\tau)}d\tau,\\
  &\phi(u,v) := 
      -\int_{0}^u  \frac{\sqrt{ 2(-k-1) } (1-k) \tau^2}{
      \sqrt{\delta (\tau)} \Delta (\tau)}  d\tau
     - \sqrt{\frac{-k-1}{2}}v.
\end{align*}}
\item
If $k=-1$, then $f$ is congruent to 
\begin{equation}\label{eq:l-conj}
    f_{{L}}(u, v) = ( \psi -\rho-\rho \phi^2,\, -2\rho\phi,\,  \psi +\rho-\rho \phi^2)
  +H \left(\frac{\phi^3}{3}+\phi,\, \phi^2,\, \frac{\phi^3}{3}-\phi\right),
\end{equation}
where $\rho(u) := u/2$,
\begin{equation*}
   \psi (u) 
   := \int_{0}^u \frac{\tau^2(\sqrt{\tau^4+4}+\tau^2)}{4 H^2 \sqrt{\tau^4+4}} d\tau
   ,\quad
  \phi(u,v) 
  := \int_{0}^u \frac{\sqrt{\tau^4+4}+\tau^2}{2H \sqrt{\tau^4+4}} d\tau + v.
\end{equation*}
\end{enumerate}

\begin{figure}[htb]
\begin{center}
 \begin{tabular}{{c@{\hspace{4mm}}c@{\hspace{8mm}}c}}
  \includegraphics[width=.2\linewidth]{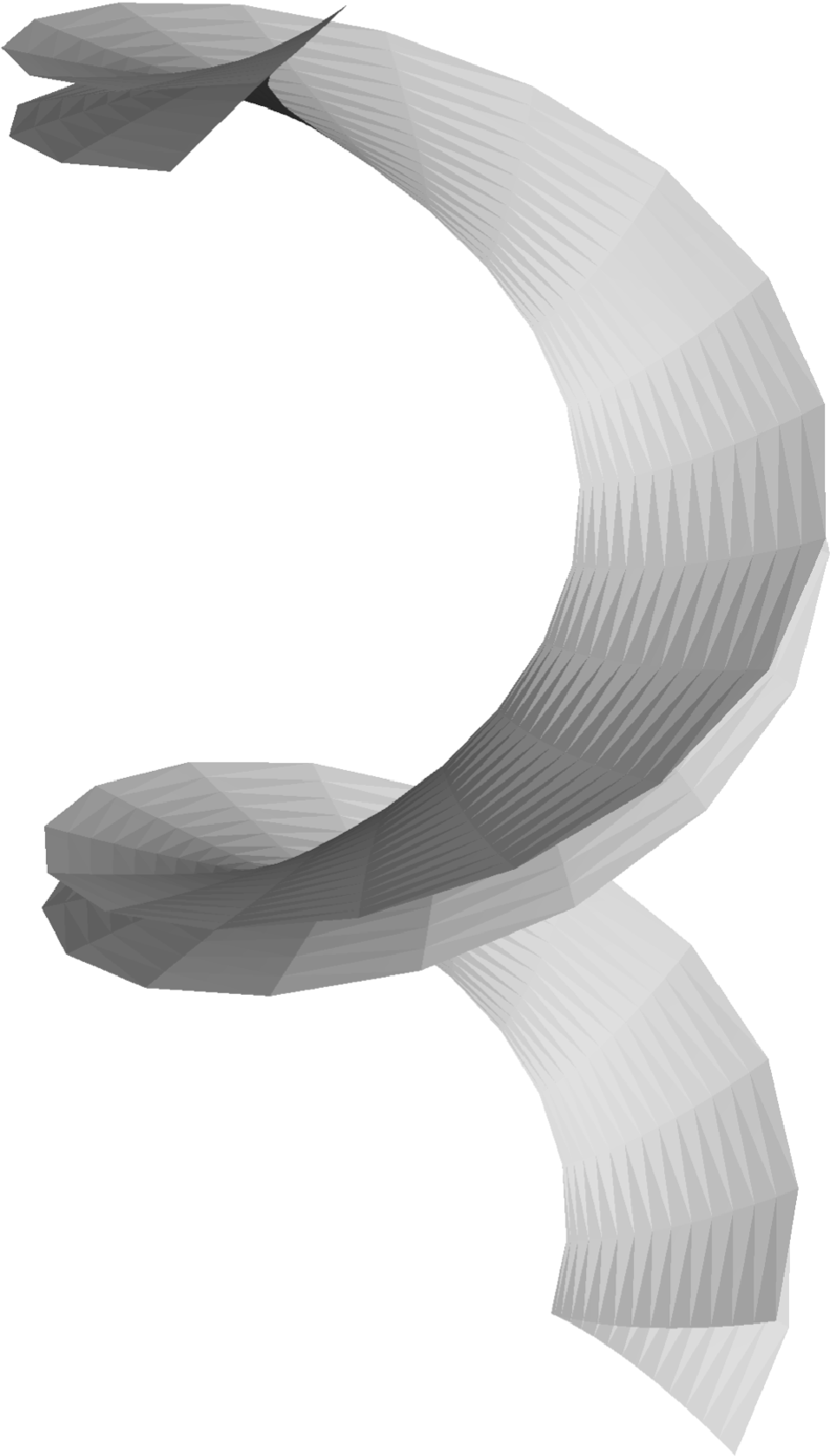}&
  \includegraphics[width=.26\linewidth]{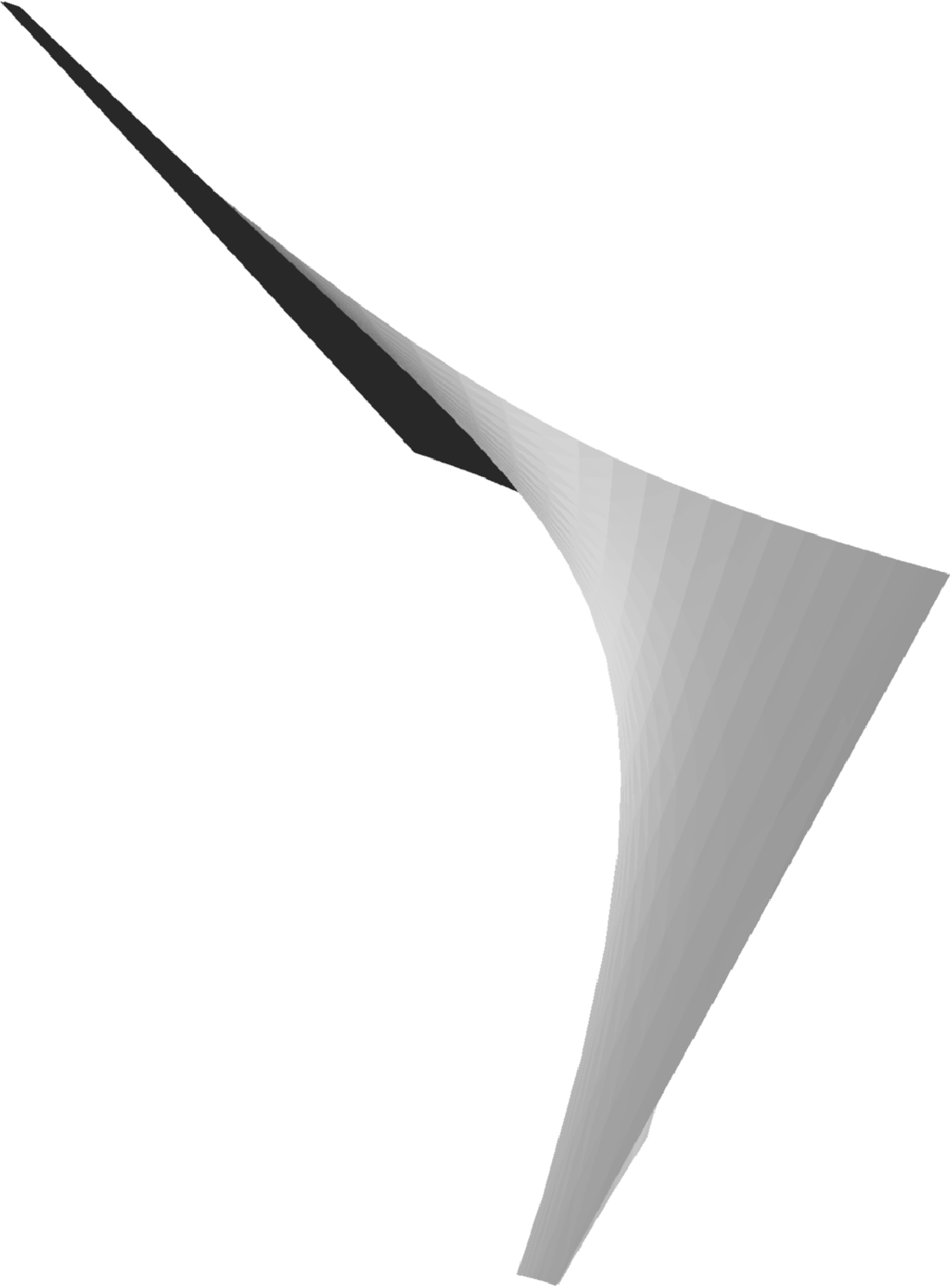}&
  \includegraphics[width=.3\linewidth]{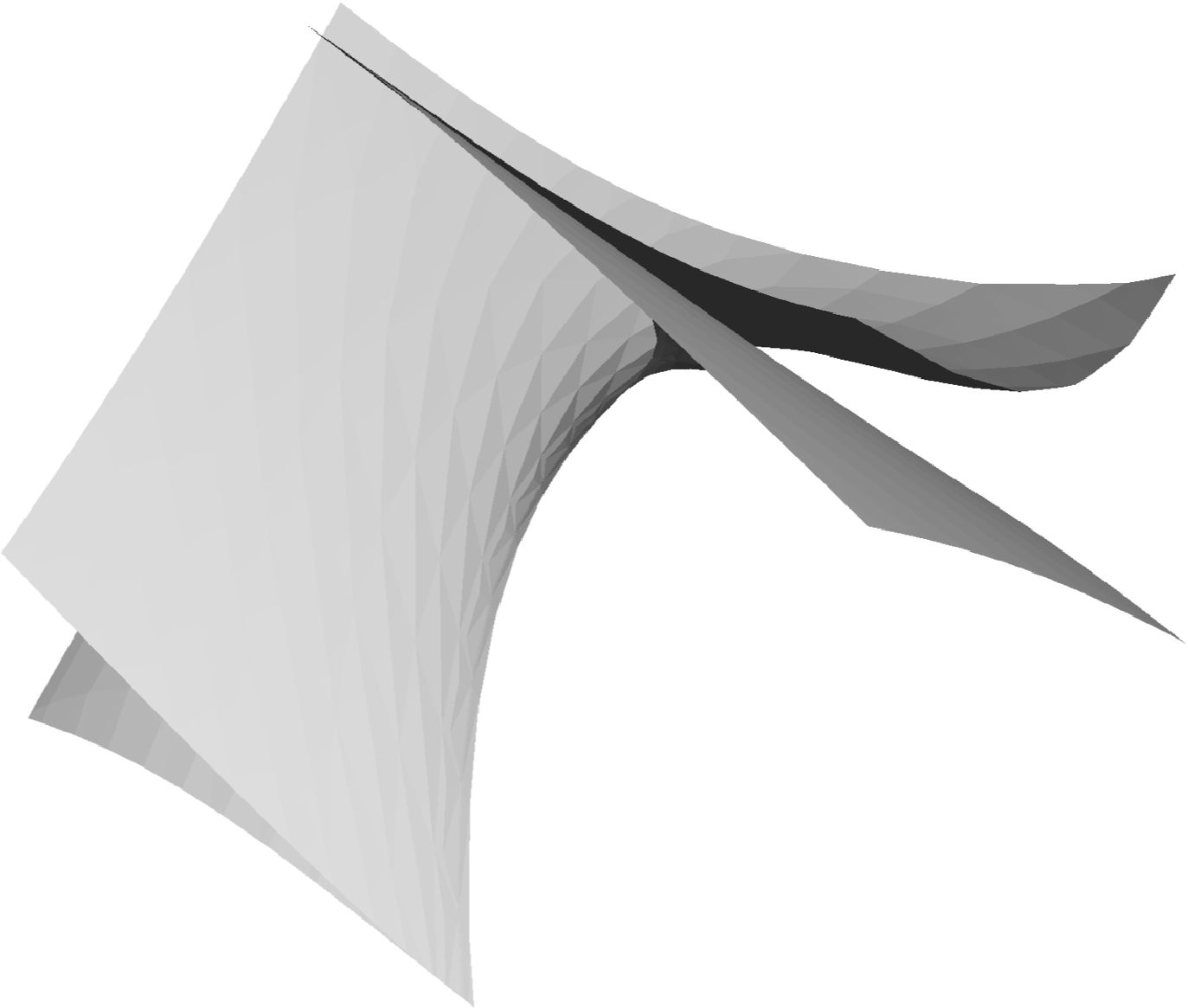} \\
  {\footnotesize  $k=2$} &
  {\footnotesize  $k=-2$} &
  {\footnotesize  $k=-1$}\\
  {\footnotesize  ($f_{{T}}(u,v)$ given in \eqref{eq:t-conj}) } &
  {\footnotesize  ($f_{{S}}(u,v)$ given in \eqref{eq:s-conj}) } &
  {\footnotesize  ($f_{{L}}(u,v)$ given in \eqref{eq:l-conj})}
 \end{tabular}
 \caption{Spacelike helicoidal CMC surfaces $(H=1/2)$
 having ${5/2}$-cuspidal edges in Lorentz-Minkowski $3$-space $\R^3_1$.
 These surfaces are conjugates of spacelike Delaunay surfaces 
 with timelike axis. See \cite{HKS} for more details.}
\label{fig:generic_sing}
\end{center}
\end{figure}

Here, we consider the case of $f=f_{{T}}(u, v)$ given in \eqref{eq:t-conj}.
Similar computations can be applied 
in the cases of $f_{{S}}$ and $f_{{L}}$ given in 
\eqref{eq:s-conj} and \eqref{eq:l-conj}, respectively.
For simplification, we may assume that $H>0$.

Since $(f_{{T}})_u(0, v)=0$, 
the singular set $S(f_{{T}})$ is given by $S(f_{{T}})=\{u=0\}$
and $\eta=\partial_u$ gives a null vector field.
Since the map $\nu : (\R^2,0) \rightarrow S^2$
defined by
\begin{align*}
\nu=&
\dfrac{1}{ \sqrt{2\Delta } \sqrt{\delta -(k+1) u^2}}
\bigg(
\sqrt{\delta \Delta },
- \sqrt{2(k+1)} u^3 \cos \phi -\sqrt{\delta } (k-1) \sin \phi ,\\
&\hspace{50mm}
 - \sqrt{2(k+1)} u^3 \sin \phi + \sqrt{\delta } (k-1) \cos \phi 
\bigg)
\end{align*}
is a unit normal vector field along $f_{{T}}$ (cf.\ Section \ref{sec:frontal}),
$f_{{T}}$ is a frontal.
Then we can check that 
$\eta\lambda(0,v)= - 1/(2 H^2 \sqrt{k+1})\,(\neq0)$
holds,
where $\lambda$ 
is the signed area density function 
(cf.\ \eqref{eq:area-density}).
Thus, we have that $f_{{T}}$ satisfies \ref{itm:etal} 
in Proposition \ref{prop:criteria}.
Set $\xi(u,v):=\partial_v$ and
$$
  \tilde{\eta}(u,v) := \partial_u 
  -\frac{2{\rm sign}(k-1)}{(k-1)^2}u^2 \partial_v
$$
(cf.\ {\eqref{eq:eta-alpha}}).
Then we can check that
$\inner{\xi f_{{T}}(0,v)}{\tilde{\eta}^2 f_{{T}}(0,v)}=0$
and $\tilde{\eta}^3 f_{{T}}(0,v)=0$.
Hence, $f_{{T}}$ satisfies \ref{itm:no23} 
in Proposition \ref{prop:criteria}.
Moreover, the constant $l$ is $0$.
Then, by a straightforward calculation, we have
$$
  \det\left(\xi f_{{T}},\tilde\eta^2 f_{{T}}, \tilde\eta^5f \right)
  (0,v)
  =-\frac{24}{H^2 |k-1|^3}\,(\neq0).
$$
Therefore, 
$f_{{T}}$ satisfies \ref{itm:25new} 
in Proposition \ref{prop:criteria},
and hence $f_{{T}}$ has ${5/2}$-cuspidal edges 
along $\gamma(v)=(0,v)$.
The invariants are calculated as
$$
  r_c(0,v) = \frac{72 H^{3/2} \sqrt{k+1}}{\sqrt{|k-1|}},\qquad
  r_b(0,v) = 0.
$$
Similarly, in the case of $k<-1$,
the invariants of $f_{{S}}$ given in \eqref{eq:s-conj} are calculated as
\begin{equation*}
  r_c(0,v) = \frac{72 H^{3/2} \sqrt{-k-1}}
                 { \sqrt{1-k} \cosh \left(\frac{\sqrt{-k-1} v}{\sqrt{2}}\right) },\quad
  r_b(0,v) = 6 \sqrt{2} H \frac{(1+k) \sinh \left(\frac{\sqrt{-k-1} v
                }{\sqrt{2}}\right)}{(1-k) \cosh ^2\left(\frac{\sqrt{-k-1} v}{\sqrt{2}}\right)},
\end{equation*}
and in the case of $k=-1$, 
the invariants of $f_{{L}}$ given in \eqref{eq:l-conj} are
calculated as
\begin{equation*}
  r_c(0,v) = -\frac{72 \sqrt{H} }{1+v^2},\qquad
  r_b(0,v) = \frac{6 \sqrt{2} v}{H \left(1+v^2\right)^2}.
\end{equation*}

\section{Intrinsicity and extrinsicity of invariants}
\label{sec:intrinsic}
Let $f:(\R^2,0)\to (\R^3,0)$ be a map-germ.
The {\it induced metric\/} or the {\it first 
fundamental form\/} of $f$ is 
the metric on $(\R^2,0)$ defined by
$f^*\inner{~}{~}$.
A function $I:(\R^2,0)\to \R$,
or $I:S(f)\to \R$, is an {\it invariant\/}
if $I$ does not depend on the choice of coordinate
system on the source.
An invariant $I:(\R^2,0)\to \R$,
or $I:S(f)\to \R$, is {\it intrinsic\/}
if it can be represented by a $C^\infty$ function
of $E,F,G$ and {their derivatives}, where 
$$
E=\inner{f_u}{f_u},\quad
F=\inner{f_u}{f_v},\quad
G=\inner{f_v}{f_v},
$$
and $(u,v)$ is a coordinate defined in terms of
the first fundamental form
$f^*\inner{~}{~}$.
An invariant $I:(\R^2,0)\to \R$,
or $I:S(f)\to \R$, is {\it extrinsic\/}
if there exists a map $\tilde f$
such that the first fundamental form of $\tilde f$ is
the same as $f$,
but $I$ does not coincide.
In \cite{MSUY, HHNSUY}, it is determined whether 
some invariants of cuspidal edges
are intrinsic or extrinsic 
(cf.\ \cite{intcr} for invariants of cross caps).
In this section, we show $r_b$ is extrinsic.

\subsection{Intrinsic criterion for ${5/2}$-cuspidal edges}
Let $f:(\R^2,0)\to(\R^3,0)$ be a frontal-germ and $0$
a non-degenerate singular point.
Here, we shall show that 
the $\A$-equivalence class of ${5/2}$-cuspidal edges
can be determined intrinsically
{among frontal-germs with non-zero limiting normal curvature 
$\kappa_\nu\neq0$ 
(Theorem \ref{thm:intrinsic_1st}, Corollary \ref{cor:intrinsic-criterion})}. 

\begin{definition}\label{def:normally-adjusted}
Let $f:(\R^2,0)\to(\R^3,0)$ be a frontal-germ
such that $0$ is a singular point of the first kind.
A coordinate system $(u,v)$ around $0$ 
is called {\it adjusted at $0$} if $f_v(0,0)=0$.
A coordinate system $(u,v)$ which is adjusted at $0$
is called {\it normally-adjusted at $0$} 
if $(u,v)$ is compatible with the orientation of $(\R^2,0)$,
$E(0,0)=1$ and $\lambda_v(0,0)=1$.
\end{definition}

The existence of such a normally-adjusted coordinate system 
can be verified by the existence of 
{\it normalized strongly adapted coordinate systems}\footnote{%
A coordinate system $(u,v)$ centered at $(0,0)$ is
called {\it normalized strongly adapted} if
the singular set is given by the $u$-axis, 
$\partial_v$ gives the null vector field along the $u$-axis,
$f_v(u,0)=0$, $|f_u(u,0)|=|f_{vv}(u,0)|=1$,
$\inner{f_u(u,0)}{f_{vv}(u,0)}=0$
and $\inner{f_u(u,v)}{f_v(u,v)}=0$ hold.
} \cite[Definition 2.24, Proposition 2.25]{HHNSUY}
(cf.\ \cite[Lemma 3.2]{SUY} and \cite[Definition 3.7]{MSUY}).

It was proved in \cite[Corollary 3.14]{MSUY}
that the Gaussian curvature $K$ and the mean curvature $H$ can be
extended smoothly across ${5/2}$-cuspidal edges.
Then, we set
\begin{equation}\label{eq:kappa-ni-tilde}
  H_{\eta}:=H_{v}(0,0),\qquad
  K_{\eta}:=K_{v}(0,0),
\end{equation}
where $(u,v)$ is a coordinate system normally-adjusted at $0$.
We call $K_{\eta}$ (respectively, $H_{\eta}$) 
the {\it null-derivative Gaussian curvature} 
(respectively, the {\it null-derivative mean curvature}\/)
of ${5/2}$-cuspidal edge at $0$.
We shall prove that the definitions of
null-derivative Gaussian and mean curvature
do not depend on the choice of 
normally-adjusted coordinate systems, {as follows}.

\begin{lemma}\label{lem:n-adjust}
If two coordinate systems $(u,v)$ and $(U,V)$
are normally-adjusted at $0$,
then 
\begin{equation}\label{eq:normally-adjusted-change}
  U_u=1,\qquad
  U_v=0,\qquad
  V_v=1
\end{equation}
holds at $(0,0)$.
Moreover, 
the definitions of 
null-derivative Gaussian and mean curvatures
$H_{\eta}$, $K_{\eta}$
are independent of the choice of 
the coordinate system normally-adjusted at $0$.
\end{lemma}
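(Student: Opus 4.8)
The plan is to verify the coordinate-change relations \eqref{eq:normally-adjusted-change} first, and then use them to show that the defining quantities $H_v(0,0)$ and $K_v(0,0)$ are unchanged. First I would write the transition map between the two normally-adjusted coordinate systems as $U=U(u,v)$, $V=V(u,v)$ and extract what each defining condition imposes at the origin. Since both systems are adjusted at $0$, we have $f_v(0,0)=0$ and $f_V(0,0)=0$; since both have the $u$- (resp.\ $U$-) axis as the singular set $S(f)=\{v=0\}=\{V=0\}$, the curve $v=0$ must map to the curve $V=0$, which forces $V_u(0,0)=0$ and hence, by compatibility with orientation together with $V_v>0$ near $0$, the off-diagonal structure of the Jacobian. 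Combining $V_u(0,0)=0$ with the chain-rule expression $f_v = U_v f_U + V_v f_V$ evaluated at $0$ (where $f_V(0,0)=0$ and $f_U(0,0)\neq0$ since $0$ is of the first kind) gives $U_v(0,0)=0$. At this stage the Jacobian at the origin is diagonal, so it remains to pin down the two diagonal entries $U_u(0,0)$ and $V_v(0,0)$.

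The normalization conditions $E(0,0)=1$ and $\lambda_v(0,0)=1$ are exactly what fix the remaining freedom. From $E=\inner{f_u}{f_u}$ and the diagonal Jacobian, $f_u(0,0)=U_u(0,0)f_U(0,0)$, so $E(0,0)=U_u(0,0)^2\,\tilde E(0,0)$, where $\tilde E$ is the first fundamental coefficient in the $(U,V)$-system; since both equal $1$, I get $U_u(0,0)^2=1$, and orientation-compatibility forces $U_u(0,0)=1$. For the last relation I would use that the signed area density $\lambda=\det(f_u,f_v,\nu)$ transforms as $\lambda = \det(\mathrm{Jac})\,\tilde\lambda\circ(U,V)$; differentiating in $v$ and evaluating at $0$, using that $\lambda(0,0)=0$ and that the Jacobian at $0$ is $\operatorname{diag}(U_u,V_v)=\operatorname{diag}(1,V_v)$, reduces $\lambda_v(0,0)=1=\tilde\lambda_V(0,0)$ to a relation of the form $V_v(0,0)\cdot(\text{Jacobian factor})=1$; with $U_u(0,0)=1$ this yields $V_v(0,0)=1$. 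This establishes \eqref{eq:normally-adjusted-change}.

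For the second assertion, I would invoke \cite[Corollary 3.14]{MSUY}, under which $H$ and $K$ extend to genuine smooth functions on a neighborhood of $0$ in the source, so $H_v$ and $K_v$ are ordinary partial derivatives of smooth functions and transform by the chain rule: $H_v = U_v\,H_U + V_v\,H_V$ and likewise for $K$. Evaluating at $0$ and substituting \eqref{eq:normally-adjusted-change}, namely $U_v(0,0)=0$ and $V_v(0,0)=1$, collapses these to $H_v(0,0)=H_V(0,0)$ and $K_v(0,0)=K_V(0,0)$, which is exactly the claimed independence of $H_\eta$ and $K_\eta$.

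The main obstacle I anticipate is the derivation of $V_v(0,0)=1$ from $\lambda_v(0,0)=1$: unlike the $E$-normalization, this one mixes the transformation of a determinant with the chain rule for $\lambda_v$, and one must be careful that the unit normal $\nu$ (hence $\lambda$ up to its sign, since $\nu$ is only defined up to orientation) is treated consistently across the two coordinate systems. Once it is noted that a normally-adjusted system is compatible with the fixed orientation of $(\R^2,0)$, so that $\det(\mathrm{Jac})>0$ and the sign of $\nu$ is not flipped, the cross terms involving $\lambda(0,0)=0$ drop out and the computation becomes routine; the rest of the argument is then purely chain-rule bookkeeping that I would not grind through in detail here.
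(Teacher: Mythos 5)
Your overall strategy is the paper's own: use the chain rule together with the three normalizations (adjustedness, $E(0,0)=1$, $\lambda_v(0,0)=1$) and orientation-compatibility. Your derivation of $U_v(0,0)=0$ and your chain-rule argument for the second assertion ($H_v=H_V$, $K_v=K_V$ at the origin once \eqref{eq:normally-adjusted-change} is known) are correct. However, the first part of your argument has a genuine flaw. You assume that in both coordinate systems the singular set is the first coordinate axis, so that $\{v=0\}$ maps to $\{V=0\}$, forcing $V_u(0,0)=0$ and a \emph{diagonal} Jacobian. This misreads Definition \ref{def:normally-adjusted}: being normally-adjusted at $0$ imposes only the pointwise conditions $f_v(0,0)=0$, orientation-compatibility, $E(0,0)=1$ and $\lambda_v(0,0)=1$; nothing requires the singular curve to be a coordinate axis. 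In fact $V_u(0,0)=0$ is false in general: if $(u,v)$ is normally-adjusted, then $U=u$, $V=v+cu$ is also normally-adjusted for any $c\neq 0$ (here $f_V=f_v$, $f_U(0,0)=f_u(0,0)$, the transition Jacobian determinant is $1$, and $\Lambda=\det(f_U,f_V,\nu)=\lambda$), yet $V_u(0,0)=c$. This is precisely why the lemma asserts only the three equalities $U_u=1$, $U_v=0$, $V_v=1$ and not diagonality. Your two subsequent computations happen to survive, because they really only use $f_V(0,0)=0$ and $U_v(0,0)=0$, not diagonality.

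The more serious consequence is that your sign determination of $U_u$ becomes circular. From the $E$-normalization you correctly get $U_u(0,0)^2=1$, but the claim that ``orientation-compatibility forces $U_u(0,0)=1$'' is invalid: with $U_v(0,0)=0$, positivity of the Jacobian only gives $U_u(0,0)V_v(0,0)>0$, i.e.\ that $U_u$ and $V_v$ share a sign, and your auxiliary input ``$V_v>0$ near $0$'' rests on the false singular-axis premise above. The repair is to reverse the order of your last two steps, which is exactly what the paper does: writing $\lambda=J\Lambda$ with $J=U_uV_v-U_vV_u$, differentiating in $v$ and using $\Lambda(0,0)=0$, $U_v(0,0)=0$, $\Lambda_V(0,0)=1$ gives $1=\lambda_v(0,0)=U_u(0,0)V_v(0,0)^2$, which forces $U_u(0,0)>0$ \emph{before} any appeal to orientation; then $E(0,0)=1$ gives $U_u(0,0)=1$, orientation gives $V_v(0,0)>0$, and $V_v(0,0)^2=1$ yields $V_v(0,0)=1$. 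With that reordering your proof closes; as written, it does not.
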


\begin{proof}
Since $f_v=f_V=0$ at $(0,0)$,
$$
  f_v = U_v f_U + V_v f_V = U_v f_U  
$$
yields $U_v(0,0)=0$.
Since $(u,v) \mapsto (U,V)$ is orientation-preserving,
$J:=U_uV_v - U_vV_u$ is positive-valued.
In particular, $J(0,0)=U_u(0,0)V_v(0,0)>0$ holds.
Setting $\lambda:=\det (f_u,f_v,\nu)$ and $\Lambda:=\det (f_U,f_V,\nu)$, 
we have 
$\lambda=J\Lambda$.
Then
$$
  \lambda_v 
  = J_v\Lambda+J\Lambda_v
  = J_v\Lambda+J(\Lambda_U U_v+\Lambda_V V_v)
$$
holds, and evaluating this at $(0,0)$ we have
\begin{equation}\label{eq:lambda_v}
  1 = U_u(0,0)V_v^2(0,0),
\end{equation}
which yields $U_u(0,0)>0$.
Since $J=U_uV_v>0$ at $(0,0)$,
$V_v(0,0)>0$ holds.
Moreover, by
$
  1 = E
  =\inner{f_u}{f_u}
  =U_u^2\inner{f_U}{f_U}
  =U_u^2
$
at $(0,0)$,
we have $U_u(0,0)=1$.
Substituting this into \eqref{eq:lambda_v}, $V_v(0,0)=1$ holds.
Hence we have \eqref{eq:normally-adjusted-change}.
Moreover, then
$$
 \frac{\partial}{\partial v}
  = U_v \frac{\partial}{\partial U}
     + V_v \frac{\partial}{\partial V}
  = \frac{\partial}{\partial V}
$$
holds at $(0,0)$.
In particular, 
the definition of $H_{\eta}$, $K_{\eta}$
as in \eqref{eq:kappa-ni-tilde}
is independent of choice of 
the coordinate system normally-adjusted at $0$.
\end{proof}

Since the Gaussian curvature $K$ and
the definition of normally-adjusted coordinate systems 
are intrinsic, the null-derivative Gaussian curvature $K_{\eta}$ 
is an intrinsic invariant for ${5/2}$-cuspidal edges.
Now, we shall check the relationships amongst
$K$, $H$, $K_{\eta}$, $H_{\eta}$ and other invariants.

\begin{lemma}\label{lem:K}
Let $f:(\R^2,0)\to(\R^3,0)$ be a germ of a ${5/2}$-cuspidal edge.
Then
the Gaussian curvature $K$ and
the mean curvature $H$ of $f$
satisfy
\begin{align}
  \label{eq:K}
  &K = \frac{1}{3} \kappa_\nu r_b - \kappa_t^2,\\
  \label{eq:H}
  &H = \frac{1}{2} \kappa_\nu + \frac{1}{6}r_b
\end{align}
along the singular set, respectively.
Moreover, 
the null-derivative Gaussian curvature $K_{\eta}$
and the null-derivative mean curvature $H_{\eta}$
of $f$ satisfy
\begin{align}
  \label{eq:Kv}
  &K_{\eta} = \frac1{24}r_\Pi,\\
  \label{eq:Hv}
  &H_{\eta} = \frac1{48}r_c
\end{align}
along the singular set, respectively.
\end{lemma}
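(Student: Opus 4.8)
The most economical route is to compute everything in a single well-chosen coordinate system and read off the four identities. The plan is to use the normalized strongly adapted coordinate system $(u,v)$ guaranteed by \cite[Proposition 2.25]{HHNSUY}, in which $S(f)=\{v=0\}$, $\partial_v$ is a null vector field, $f_v(u,0)=0$, $|f_u(u,0)|=|f_{vv}(u,0)|=1$, $\inner{f_u(u,0)}{f_{vv}(u,0)}=0$, and $\inner{f_u}{f_v}\equiv 0$. The point of this choice is that along $\{v=0\}$ the vectors $f_u$, $f_{vv}$, and $\nu$ form an orthonormal frame (after orienting $\nu$ appropriately), so the determinants appearing in $r_b$, $r_c$, $\kappa_t$, $\kappa_\nu$ reduce to single coefficients of the Taylor expansion of $f$. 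Concretely, in this frame $\nu$ is (up to sign) $f_u\times f_{vv}$, and I would expand $f$ to fifth order in $v$ along the singular set, writing the coefficients of $f_{vv}$, $f_{vvv}$, $f_{vvvv}$, $f_{vvvvv}$ in the $\{f_u,f_{vv},\nu\}$ basis. Since $f$ is not a front, $\tilde\eta^3 f = l\,\tilde\eta^2 f$, so $f_{vvv}$ has no $\nu$-component; this is the feature that kills the cuspidal curvature $\kappa_c$ and forces the higher-order analysis.

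Next I would assemble the two zeroth-order identities \eqref{eq:K} and \eqref{eq:H}. The induced metric has $E=\inner{f_u}{f_u}$, $F=\inner{f_u}{f_v}$, $G=\inner{f_v}{f_v}$, and since $G$ vanishes to second order in $v$ on $S(f)$ while $f_{vv}$ is the leading normal-plane datum, the Gaussian and mean curvatures extend smoothly by \cite[Corollary 3.14]{MSUY}. I would compute the second fundamental form coefficients $L=\inner{f_{uu}}{\nu}$, $M=\inner{f_{uv}}{\nu}$, $N=\inner{f_{vv}}{\nu}$ along $\{v=0\}$ and feed them into $K=(LN-M^2)/(EG-F^2)$ and $H=(EN-2FM+GL)/\bigl(2(EG-F^2)\bigr)$, taking the limit $v\to0$. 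Matching the resulting expressions against the normal-form formulas for $\kappa_\nu$, $\kappa_t$, $r_b$ listed just before Proposition~3.x (the boxed leading entries $\kappa_\nu(0)=b_{20}$, $\kappa_t(0)=2b_{12}$, $r_b(0)=b_{04}$) should give \eqref{eq:K} and \eqref{eq:H} directly, since every term there is quadratic in the same handful of coefficients.

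For the null-derivative identities \eqref{eq:Kv} and \eqref{eq:Hv} I would differentiate $K$ and $H$ in $v$ and evaluate at $(0,0)$, using that a normally-adjusted coordinate system also satisfies the calibrations of Definition~\ref{def:normally-adjusted} so that $\partial/\partial v=\partial/\partial V$ at the origin (Lemma~\ref{lem:n-adjust}); this guarantees $K_\eta$, $H_\eta$ are computed unambiguously. The secondary cuspidal curvature $r_c(0)=3b_{05}$ enters precisely at the order $v^5$ in the expansion, which is where $K_v$ and $H_v$ pick up their leading new contribution, and $r_\Pi=\kappa_\nu r_c$ appears because $K_v$ mixes the $\nu$-component of $f_{vvvvv}$ (giving $r_c$) with $L\sim\kappa_\nu$ through the quotient structure of $K$. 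The main obstacle will be bookkeeping: one must carry the Taylor expansion of $f$, of $\nu$, and of the metric coefficients consistently to fifth order in $v$, and the $v$-derivatives of the denominators $EG-F^2$ and of $\nu$ itself contribute several cross terms that must cancel to leave the clean factors $1/24$ and $1/48$. I would organize this by exploiting the orthonormality at $v=0$ to zero out as many cross terms as possible before differentiating, and cross-check the final constants against the explicit Delaunay-conjugate examples in Section~\ref{sec:conjugate-Delaunay}, where $r_c$ and $r_b$ are known in closed form.
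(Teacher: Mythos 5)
Your proposal is correct, and its skeleton --- compute everything in one distinguished coordinate system in which the invariants collapse to single Taylor/frame coefficients, then compare --- is the same as the paper's; the difference lies in which coordinates carry the computation. The paper simply invokes Proposition \ref{prop:normalform}: after a source coordinate change \emph{and} an ambient isometry, $f$ is the explicit normal form \eqref{eq:normalform}, the invariants are already listed in its coefficients ($\kappa_\nu(0)=b_{20}$, $\kappa_t(0)=2b_{12}$, $r_b(0)=b_{04}$, $r_c(0)=3b_{05}$), and the proof is a plug-in: $K(0,0)=\tfrac13 b_{20}b_{04}-4b_{12}^2$, $H(0,0)=\tfrac12 b_{20}+\tfrac16 b_{04}$, $K_v(0,0)=\tfrac18 b_{20}b_{05}$, $H_v(0,0)=\tfrac1{16}b_{05}$, plus the observation that the normal-form coordinates are normally-adjusted, so $K_v=K_\eta$ and $H_v=H_\eta$; the identities hold along the whole singular set because the origin is an arbitrary singular point. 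You instead work in the normalized strongly adapted coordinates of \cite{HHNSUY} (no ambient isometry), expand $f$ in the frame $\{f_u,f_{vv},\nu\}$ along $S(f)$, and obtain $K$, $H$, $K_v$, $H_v$ as limits of the classical quotient formulas; this costs more bookkeeping (one must check, using $\inner{f_v}{\nu}\equiv 0$ and $f_{vvv}\parallel f_{vv}$ on $S(f)$, that $N=\inner{f_{vv}}{\nu}$ and $M=\inner{f_{uv}}{\nu}$ vanish to second and first order in $v$, so the quotients have finite limits), but it buys the identities at every point of $S(f)$ simultaneously and bypasses Proposition \ref{prop:normalform} entirely. One caution: you propose to match your computation against the boxed normal-form coefficients $b_{ij}$, but those belong to a different coordinate system; the legitimate matching is to identify your frame components directly with the invariants through their defining formulas, which with your normalizations read, e.g., $r_b=\pm\inner{f_{vvvv}}{\nu}$ and $\kappa_t=\pm\inner{f_{uvv}}{\nu}$ on $S(f)$ --- once stated this way, your derivation of \eqref{eq:K}--\eqref{eq:Hv} goes through.
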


\begin{proof}
By Proposition \ref{prop:normalform},
without loss of generality, we may assume that
$f$ is given by the form in \eqref{eq:normalform}.
A direct calculation yields
$$
\kappa_t(0) = 2 b_{12},\quad 
r_b(0)=b_{04},\quad 
\kappa_\nu(0)=b_{20},\quad
r_c(0)=3 b_{05},\quad
r_\Pi(0)=3 b_{20} b_{05}
$$
and 
$$
K = \frac1{3}b_{20} b_{04} - 4 b_{12}^2,\quad
H = \frac1{2}b_{20} +\frac1{6}b_{04}
$$
hold at $(0,0)$.
Hence, \eqref{eq:K} and \eqref{eq:H} hold.
On the other hand,
since the coordinate system $(u,v)$ of
$f(u,v)$ given by the form in \eqref{eq:normalform}
is normally-adjusted at $(0,0)$,
we have 
$H_v = H_{\eta}$ and
$K_v = K_{\eta}$ at $(0,0)$.
By a direct calculation, we have that
$$
H_v(0,0) = \frac1{16}b_{05},\qquad
K_v(0,0) = \frac1{8}b_{20} b_{05},
$$
and hence, \eqref{eq:Kv} and \eqref{eq:Hv} hold.
\end{proof}

\begin{theorem}\label{thm:intrinsic_1st}
For ${5/2}$-cuspidal edges,
the secondary product curvature $r_\Pi$ is an intrinsic invariant.
\end{theorem}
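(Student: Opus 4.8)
The plan is to reduce the statement entirely to the intrinsicity of the null-derivative Gaussian curvature $K_{\eta}$, which has already been packaged into Lemma \ref{lem:K}. Indeed, equation \eqref{eq:Kv} reads $K_{\eta}=\tfrac{1}{24}r_\Pi$ along the singular set, so that $r_\Pi=24\,K_{\eta}$. Thus it suffices to prove that $K_{\eta}$ is an intrinsic invariant, i.e.\ that it can be written as a $C^\infty$ function of $E,F,G$ and their derivatives in a coordinate system determined by the first fundamental form $f^*\inner{~}{~}$ alone.

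First I would record that the Gaussian curvature $K$ is intrinsic by Gauss's Theorema Egregium, and that by \cite[Corollary 3.14]{MSUY} it extends smoothly across the ${5/2}$-cuspidal edge; in particular $K_v(0,0)$ is a well-defined smooth quantity in an adjusted coordinate. Next I would check that the defining conditions of a normally-adjusted coordinate system (Definition \ref{def:normally-adjusted}) are themselves intrinsic. The orientation compatibility and the normalization $E(0,0)=1$ are manifestly expressible through $f^*\inner{~}{~}$, and the adjustment condition $f_v(0,0)=0$ is equivalent to $G(0,0)=0$ (whence also $F(0,0)=0$), again intrinsic.

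The one condition that requires care — and where I expect the main obstacle to lie — is $\lambda_v(0,0)=1$, since $\lambda=\det(f_u,f_v,\nu)$ is defined extrinsically through $\nu$. The remedy is that $\lambda^2=EG-F^2$ is intrinsic and $\lambda(0,0)=0$ at the singular point, so differentiating twice gives $(EG-F^2)_{vv}(0,0)=2\,\lambda_v(0,0)^2$. Hence $\lambda_v(0,0)=1$ is equivalent to $(EG-F^2)_{vv}(0,0)=2$ together with the sign fixed by the chosen orientation, which is an intrinsic prescription. Consequently the whole class of normally-adjusted coordinate systems is singled out intrinsically, and $K_{\eta}=K_v(0,0)$ computed in any of them becomes a function of $E,F,G$ and their derivatives; its independence of the particular normally-adjusted coordinate is precisely Lemma \ref{lem:n-adjust}. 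Combining this with $r_\Pi=24\,K_{\eta}$ from \eqref{eq:Kv} then yields the intrinsicity of $r_\Pi$.
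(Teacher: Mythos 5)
Your proposal is correct and takes essentially the same route as the paper: it reduces the statement via \eqref{eq:Kv} (i.e.\ $r_\Pi=24K_{\eta}$) to the intrinsicity of the null-derivative Gaussian curvature $K_{\eta}$, whose well-definedness among normally-adjusted coordinates is exactly Lemma \ref{lem:n-adjust}. Your additional observation that the normalization $\lambda_v(0,0)=1$ can be expressed intrinsically through $(EG-F^2)_{vv}(0,0)=2\lambda_v(0,0)^2$ (together with the orientation convention) merely makes explicit a point the paper asserts without elaboration, so it is a welcome refinement rather than a different argument.
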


\begin{proof}
By \eqref{eq:Kv} in Lemma \ref{lem:K}
and the fact that $K_{\eta}$ is intrinsic,
$r_\Pi$ is intrinsic as well.
\end{proof}

{
Let $f : (\R^2,0)\to (\R^3,0)$ be 
a frontal-germ such that
$0$ is a non-degenerate singular point.
If $\kappa_\nu(0)\neq0$,
then $f$ is called \emph{non-$\nu$-flat}.
The following corollary implies that 
the $\A$-equivalence class of ${5/2}$-cuspidal edges
can be determined intrinsically
amongst non-$\nu$-flat frontal-germs.
}

\begin{corollary}\label{cor:intrinsic-criterion}
Let $f : (\R^2,0)\to (\R^3,0)$ be a frontal-germ
such that $0$ is a singular point of the first kind.
Assume that $f$ is non-$\nu$-flat.
Then,
$f$ at $0$ is a ${5/2}$-cuspidal edge
if and only if 
$\kappa_\Pi=0$ along $S(f)$ and $r_\Pi(0)\neq0$.
\end{corollary}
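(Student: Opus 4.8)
The plan is to read the Corollary off directly from the criterion in Proposition \ref{prop:criteria}, using the non-$\nu$-flatness hypothesis to strip the common factor $\kappa_\nu$ from the two product-type invariants $\kappa_\Pi=\kappa_c\kappa_\nu$ and $r_\Pi=\kappa_\nu r_c$. First I would record that, since $0$ is a singular point of the first kind, condition \ref{itm:etal} of Proposition \ref{prop:criteria} holds automatically, so only conditions \ref{itm:no23} and \ref{itm:25new} remain to be matched. Because $f$ is non-$\nu$-flat, $\kappa_\nu(0)\neq0$, and by continuity $\kappa_\nu\neq0$ on $S(f)$ in a neighborhood of $0$; hence, as germs along $S(f)$, the vanishing $\kappa_\Pi=0$ is equivalent to $\kappa_c=0$, while $r_\Pi(0)\neq0$ is equivalent to $r_c(0)\neq0$.

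Next I would translate these two conditions on $\kappa_c$ and $r_c$ back into the determinant conditions \ref{itm:no23} and \ref{itm:25new}. From the defining formula \eqref{eq:kappa-c} of $\kappa_c$, the denominator $|\xi f\times\eta^2 f|^{5/2}$ is nonzero, since the unit normal $\nu$ is parallel to $\xi f\times\eta^2 f$ (as used in the proof of Lemma \ref{lem:no23nofront}); therefore $\kappa_c=0$ along $S(f)$ holds if and only if $\det(\xi f,\eta^2 f,\eta^3 f)=0$ along $S(f)$, which is exactly condition \ref{itm:no23}. In turn, condition \ref{itm:no23} guarantees that $f$ is not a front and that $\tilde\eta^3 f(0)=l\,\tilde\eta^2 f(0)$, which is precisely what makes $r_c$ (and hence $r_\Pi$) well-defined; then formula \eqref{eq:kappa-c-ni} shows $r_c(0)\neq0$ if and only if $\det(\xi f,\tilde\eta^2 f,3\tilde\eta^5 f-10\,l\,\tilde\eta^4 f)(0)\neq0$, which is condition \ref{itm:25new}.

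Assembling these equivalences yields both directions simultaneously: the three conditions of Proposition \ref{prop:criteria} hold if and only if $\kappa_\Pi=0$ along $S(f)$ and $r_\Pi(0)\neq0$, which is the assertion. I do not expect a serious obstacle, as the Corollary is essentially a repackaging of the already-established criterion; the only points requiring care are (i) checking that the denominator $|\xi f\times\eta^2 f|$ does not vanish, so that the vanishing of $\kappa_c$ is genuinely equivalent to condition \ref{itm:no23}, and (ii) keeping the logical order correct, namely using $\kappa_\Pi=0$ first to secure condition \ref{itm:no23} (and thereby the well-definedness of $r_\Pi$) before invoking the hypothesis $r_\Pi(0)\neq0$ to obtain condition \ref{itm:25new}.
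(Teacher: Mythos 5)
Your proposal is correct and follows essentially the same route as the paper's own proof: Proposition \ref{prop:criteria} is translated, via the defining formulas \eqref{eq:kappa-c} and \eqref{eq:kappa-c-ni}, into ``$\kappa_c=0$ along $S(f)$ and $r_c(0)\neq0$'', and the non-$\nu$-flatness hypothesis $\kappa_\nu\neq0$ is then used to convert these into the conditions on $\kappa_\Pi$ and $r_\Pi$ (the paper's proof is merely terser, leaving implicit the bookkeeping you spell out). One small repair to your point (i): the nonvanishing of $|\xi f\times\eta^2 f|$ comes from the non-degeneracy of the first-kind singular point --- $d\lambda(\eta)\neq0$ forces $\det(\xi f,\eta^2 f,\nu)\neq0$ --- not merely from $\nu$ being parallel to $\xi f\times\eta^2 f$, since parallelism alone would not exclude the zero vector.
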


\begin{proof}
By the definitions of $\kappa_c$ given in \eqref{eq:kappa-c},
$r_c$ given in \eqref{eq:kappa-c-ni} and
the criterion (Proposition \ref{prop:criteria}),
$f$ at $0$ is a ${5/2}$-cuspidal edge
if and only if 
$\kappa_c=0$ along $S(f)$ and $r_c(0)\neq0$.
Therefore, 
imposing the non-$\nu$-flatness $\kappa_\nu\neq0$,
we have that
$0$ is a non-$\nu$-flat ${5/2}$-cuspidal edge
if and only if 
$\kappa_\Pi=0$ along $S(f)$ and $r_\Pi(0)\neq0$.
\end{proof}

Following Corollary \ref{cor:intrinsic-criterion}, 
we give a definition of {\it intrinsic ${5/2}$-cuspidal edges}
for singular points of a certain metric, called the Kossowski metric
in Section \ref{sec:Kossowski}
(cf.~Definition \ref{def:intrinsic-ramphoid}).

\subsection{Isometric deformations of ${5/2}$-cuspidal edges}
The following fact is a direct conclusion 
of \cite[Theorem B]{HNUY}:

\begin{fact}[\cite{HNUY}]\label{fact:B}
Let $f : (\R^2,0)\to (\R^3,0)$ be an analytic frontal-germ
such that $0$ is a singular point of the first kind,
and $\gamma : (\R,0) \to (\R^2,0)$ a singular curve.
Assume that $f$ has non-vanishing limiting normal curvature. 
Then, for given analytic functions germs $\omega(t)$, $\tau(t)$ at $t=0$,
there exists an analytic frontal-germ $g=g_{\omega,\tau}$ 
such that
\begin{enumerate}
\item\label{item:B1}
the first fundamental form of $g_{\omega,\tau}$ coincides with that of $f$,
\item\label{item:B2}
the limiting normal curvature function of $g_{\omega,\tau}$ 
along $\gamma$ coincides with $e^{\omega(t)}$
for a suitable choice of a unit normal vector field, and
\item\label{item:B3}
$\tau(t)$ gives the torsion function of $\hat{\gamma}_g(t)$,
where $\hat{\gamma}_g(t):=g\circ \gamma(t)$.
\end{enumerate}
The possibilities for congruence classes of such a $g$ 
are at most two unless $\tau$ vanishes identically.
On the other hand, if $\tau$ vanishes identically
$($i.e., $\hat{\gamma}_g$ is a planar curve$)$,
then the congruence class of $g$ is uniquely determined.
\end{fact}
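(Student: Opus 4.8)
The plan is to obtain Fact~\ref{fact:B} as a direct specialization of the general isometric-deformation theorem \cite[Theorem~B]{HNUY}, so that the only genuine task is to verify that the hypotheses of that theorem are exactly those assumed here. First I would observe that a ${5/2}$-cuspidal edge, and more generally any frontal-germ whose singular point is of the first kind, lies in the class of frontals treated in \cite{HNUY}: the image $\hat\gamma(t)=f(\gamma(t))$ of the singular curve is a regular space curve (a singular point of the first kind satisfies $\hat\gamma'(0)\ne0$), so its torsion $\tau(t)$ is a well-defined analytic function, while the limiting normal curvature $\kappa_\nu$ is the normal curvature of $\hat\gamma$ measured against the globally defined unit normal $\nu$. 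The non-vanishing of $\kappa_\nu$ is precisely the non-degeneracy condition under which \cite[Theorem~B]{HNUY} produces deformations with prescribed normal-curvature and torsion functions; hence conclusions \eqref{item:B1}--\eqref{item:B3}, together with the counting of congruence classes, transfer verbatim.

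For completeness I would indicate the mechanism underlying \cite[Theorem~B]{HNUY}. The idea is a Bonnet-type (fundamental-theorem-of-surfaces) construction adapted to frontals: one fixes the first fundamental form $E\,du^2+2F\,du\,dv+G\,dv^2$ in an adapted coordinate system (for instance the normalized strongly adapted coordinates recalled in Section~\ref{sec:intrinsic}), sets up the moving frame $(\e_1,\e_2,\nu)$ along $f$, and writes its Gauss--Weingarten equations. Isometric deformation means varying the extrinsic (second-fundamental-form) data while keeping $E,F,G$ fixed, and the admissible variations are exactly those preserving the Gauss--Codazzi integrability conditions. Prescribing $\kappa_\nu=e^{\omega(t)}$ and the torsion $\tau(t)$ along the singular curve fixes the remaining Cauchy data, and the deformed frontal $g=g_{\omega,\tau}$ is produced by integrating the resulting analytic system; the Cauchy--Kovalevskaya theorem is what forces the analytic category in the hypothesis.

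The counting of congruence classes I would inherit directly from \cite[Theorem~B]{HNUY} rather than reprove. Heuristically it reflects a reflection ambiguity in the Bonnet-type reconstruction: fixing the intrinsic data together with $\kappa_\nu$ leaves only finitely many compatible extrinsic configurations, a phenomenon already visible in the relation $K=\tfrac13\kappa_\nu r_b-\kappa_t^2$ of \eqref{eq:K}, which shows how the intrinsic Gaussian curvature constrains the remaining invariants once $\kappa_\nu$ is prescribed. These configurations collapse to a single congruence class precisely when the image singular curve $\hat\gamma_g$ is planar, i.e.\ when $\tau\equiv0$, which gives the stated uniqueness.

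The step I expect to be the real obstacle --- and the reason \cite{HNUY} is invoked rather than the classical Bonnet theorem --- is the degeneration of the frame and of the second fundamental form along the singular curve, where $EG-F^2$ vanishes. One must rewrite the Gauss--Codazzi system in terms of quantities that remain regular across the singularity (such as $\kappa_\nu$, $\kappa_t$, $r_b$ and the derivatives of $\nu$), check that it stays solvable there, and confirm that its analytic solution integrates to a frontal whose singular locus is again a curve of the first kind carrying the prescribed invariants. This regularity-across-the-singularity analysis is exactly the content of \cite[Theorem~B]{HNUY}; in our reduction the only remaining task is the hypothesis-matching described above, which is immediate for real analytic ${5/2}$-cuspidal edges with non-vanishing limiting normal curvature.
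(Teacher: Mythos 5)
Your proposal is correct and takes essentially the same route as the paper: the paper gives no proof of Fact~\ref{fact:B} at all, introducing it as ``a direct conclusion of \cite[Theorem~B]{HNUY}'', which is exactly the reduction-plus-hypothesis-matching you carry out (your sketch of the Bonnet-type mechanism behind \cite[Theorem~B]{HNUY} is optional background, not part of what needs to be supplied). One detail worth correcting: what makes \cite[Theorem~B]{HNUY} applicable is that the first fundamental form of an analytic frontal-germ with a singular point of the first kind (Type~I) is an analytic Kossowski metric (cf.\ Fact~\ref{fact:HNUY-B}), so the non-vanishing of $\kappa_\nu$ is a separate standing hypothesis of Fact~\ref{fact:B} rather than, as you assert, ``precisely the non-degeneracy condition'' under which that theorem operates.
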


Using Fact \ref{fact:B},
we shall prove the following,
which is an analog of a result of 
\cite[Theorem A]{NUY}
and \cite[Corollary D]{HNUY}.

\begin{theorem}[Isometric deformation of ${5/2}$-cuspidal edges]
\label{thm:isom-defo}
Let $f:(\R^2,0)\to(\R^3,0)$ be a germ of an analytic ${5/2}$-cuspidal edge
with non-vanishing limiting normal curvature,
and let $\kappa_s(t)$ be the singular curvature function 
along the singular curve $\gamma(t)$.
Take a germ of an analytic regular space curve $\sigma(t)$
such that its curvature function $\kappa(t)$ satisfies
$$
  \kappa > |\kappa_s|
$$
at $0$.
Then, there exists a germ of an analytic ${5/2}$-cuspidal edge 
$g_{\sigma}:(\R^2,0)\to(\R^3,0)$
with non-vanishing limiting normal curvature
such that
\begin{enumerate}
\item
the first fundamental form of $g_{\sigma}$ coincides with that of $f$,
\item
the singular image $g_{\sigma}\circ \gamma$ 
coincides with $\sigma$.
\end{enumerate}
The possibilities for congruence classes of such a $g_{\sigma}$ 
are at most two unless $\tau$ vanishes identically.
On the other hand, if $\tau$ vanishes identically
$($i.e., $\sigma$ is a planar curve$)$,
then the congruence class of $g_{\sigma}$ is uniquely determined.
\end{theorem}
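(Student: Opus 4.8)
The plan is to reduce the theorem to the already-stated Fact~\ref{fact:B} by translating its hypotheses into the intrinsic/extrinsic language developed in this section. The key observation is that a germ of analytic $5/2$-cuspidal edge is, by Corollary~\ref{cor:intrinsic-criterion} and the non-vanishing limiting normal curvature assumption, a singular point of the first kind which is \emph{not} a front (since $\kappa_c\equiv 0$), and whose non-$\nu$-flatness guarantees $\kappa_\nu\neq 0$ along $\gamma$. Thus $f$ falls squarely under the hypotheses of Fact~\ref{fact:B}. First I would fix a normally-adjusted coordinate system and let $\omega(t),\tau(t)$ denote the data of the target: given the prescribed analytic space curve $\sigma(t)$ with curvature $\kappa(t)$ and torsion $\tau(t)$, I would set $e^{\omega(t)}:=\kappa_\nu^{g_\sigma}(t)$ to be the desired limiting normal curvature function and feed $(\omega,\tau)$ into Fact~\ref{fact:B} to produce a candidate isometric frontal $g=g_{\omega,\tau}$ sharing the first fundamental form of $f$.

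The next step is to verify that the frontal $g$ produced by Fact~\ref{fact:B} is genuinely a $5/2$-cuspidal edge, and that its singular image is exactly $\sigma$. The singular image $\hat\gamma_g=g\circ\gamma$ is a space curve; Fact~\ref{fact:B}\,\ref{item:B3} prescribes its torsion to be $\tau$, and its limiting normal curvature is $e^{\omega}$ by \ref{item:B2}. To pin down $\hat\gamma_g$ as a space curve I need its curvature as well. Here I would invoke the relation between the singular curvature $\kappa_s$, the limiting normal curvature $\kappa_\nu$, and the curvature of the singular image: since the first fundamental form is preserved and $\kappa_s$ is intrinsic (as established in \cite{SUY,MSUY}), the singular curvature of $g_\sigma$ equals $\kappa_s$ of $f$, and the curvature of the space curve $\hat\gamma_g$ satisfies a Pythagorean-type identity $\kappa^2=\kappa_s^2+\kappa_\nu^2$. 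The hypothesis $\kappa>|\kappa_s|$ is precisely what makes $\kappa_\nu^2=\kappa^2-\kappa_s^2>0$ solvable with a real $\kappa_\nu$, so I would define $e^{\omega(t)}:=\sqrt{\kappa(t)^2-\kappa_s(t)^2}$; this is analytic and non-vanishing by the strict inequality, and it is consistent with the requirement of non-vanishing limiting normal curvature.

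The crucial step — and the main obstacle — is showing that $g_\sigma$ is again a $5/2$-cuspidal edge rather than some other first-kind non-front singularity. By Corollary~\ref{cor:intrinsic-criterion}, among non-$\nu$-flat first-kind frontal-germs the $5/2$-cuspidal edge condition is equivalent to $\kappa_\Pi\equiv 0$ along $S(f)$ together with $r_\Pi(0)\neq 0$. The vanishing $\kappa_\Pi=\kappa_c\kappa_\nu\equiv 0$ is automatic because $g$ inherits $\kappa_c\equiv 0$: Fact~\ref{fact:B} produces frontals sharing the first fundamental form, and the deformation construction in \cite{HNUY} is designed precisely to preserve the frontal (non-front) structure along the singular curve. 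The non-degeneracy $r_\Pi(0)\neq 0$ is where Theorem~\ref{thm:intrinsic_1st} does the real work: since $r_\Pi$ is an intrinsic invariant and $g_\sigma$ has the same first fundamental form as the genuine $5/2$-cuspidal edge $f$, we get $r_\Pi^{g_\sigma}(0)=r_\Pi^{f}(0)\neq 0$ for free. Thus Corollary~\ref{cor:intrinsic-criterion} certifies $g_\sigma$ as a $5/2$-cuspidal edge without any further computation. Finally, the uniqueness and congruence-count statement transfers verbatim from Fact~\ref{fact:B}: the at-most-two congruence classes (reflecting the torsion sign ambiguity) reduce to a unique class exactly when $\tau\equiv 0$, i.e.\ when $\sigma$ is planar. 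I would close by setting $g_\sigma:=g_{\omega,\tau}$ for this choice of $(\omega,\tau)$ and noting that conditions \ref{item:B1}, \ref{item:B2} combined with the reconstruction of $\hat\gamma_g$ from $(\kappa_\nu,\tau)$ via the fundamental theorem of space curves identify $g_\sigma\circ\gamma$ with $\sigma$ up to the stated ambiguity.
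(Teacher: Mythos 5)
Your proposal is correct and takes essentially the same route as the paper: the same choice $\omega(t)=\tfrac12\log\left(\kappa(t)^2-\kappa_s(t)^2\right)$ fed into Fact~\ref{fact:B}, followed by Corollary~\ref{cor:intrinsic-criterion} together with the intrinsity of $r_\Pi$ (Theorem~\ref{thm:intrinsic_1st}) to certify that the resulting frontal is a $5/2$-cuspidal edge, with the congruence count transferred verbatim from Fact~\ref{fact:B}. The one spot where you deviate is the justification of $\kappa_\Pi\equiv 0$ for $g_\sigma$ by appeal to the ``design'' of the construction in \cite{HNUY}; the paper instead uses that $\kappa_\Pi=\kappa_c\kappa_\nu$ is itself an intrinsic invariant (cf.\ \cite{MSUY}), which is the clean argument and, after dividing by the nonvanishing $\kappa_\nu=e^{\omega}$, also yields the non-front property $\kappa_c\equiv0$ that you asserted.
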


\begin{proof}
Set $\omega(t)$ as
$$
  \omega(t) := \frac1{2} \log \left( \kappa(t)^2 - \kappa_s(t)^2 \right).
$$
Let $\tau(t)$ be the torsion function of $\sigma(t)$.
By Fact \ref{fact:B}, 
there exists an analytic frontal-germ 
$g_{\sigma}:=g_{\omega,\tau}:(\R^2,0)\to(\R^3,0)$
such that the items \ref{item:B1}--\ref{item:B3} in Fact \ref{fact:B} hold.
Thus, it suffices to show that
$g_{\sigma}$ has a ${5/2}$-cuspidal edge at $0$.
Since
the first fundamental form of $f$ coincides with that of $g_{\sigma}$, 
the product curvature $\kappa_\Pi$ 
and the secondary product curvature $r_\Pi$
of $f$ coincide with
those of $g_{\sigma}$, respectively.
Therefore, by Corollary \ref{cor:intrinsic-criterion},
we have that $g_{\sigma}:(\R^2,0)\to(\R^3,0)$ has a ${5/2}$-cuspidal edge at $0$.
\end{proof}

In \cite{HNUY}, the following is also proved.

\begin{fact}[{\cite[Corollary E]{HNUY}}]
\label{fact:E}
Let $f_0$, $f_1$ be two analytic frontal germs with 
non-degenerate singularities
whose limiting normal curvatures do not vanish.
Suppose that they are isometric to each other.
Then there exists {a continuous} 1-parameter family
of frontal germs $g_t$ $(0\le t \le 1)$
satisfying the following properties\/{\rm :}
\begin{enumerate}
\item\label{item:E1} $g_0=f_0$ and $g_1=f_1$,
\item\label{item:E2} $g_t$ is isometric to $g_0$,
\item\label{item:E3} the limiting normal curvature of each $g_t$
does not vanish.
\end{enumerate}
Moreover, if both $f_0$ and $f_1$
are germs of cuspidal edges, swallowtails or  cuspidal cross caps,
then so are $g_t$ for $0\le t\le 1$.
\end{fact}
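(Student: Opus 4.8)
The plan is to deduce the statement from the realization result Fact~\ref{fact:B}, by regarding the pair $(\omega,\tau)$ there as the free data that parametrizes all analytic frontal germs sharing a fixed first fundamental form and carrying non-vanishing limiting normal curvature. Since $f_0$ and $f_1$ are mutually isometric, after an analytic coordinate change on the source I may assume that they induce the \emph{same} first fundamental form $ds^2$ and that their singular curves are carried by a common parametrization $\gamma(t)$. Writing $\omega_i(t)$ for the logarithm of the (positive, after suitably orienting $\nu$) limiting normal curvature $\kappa_\nu$ of $f_i$ and $\tau_i(t)$ for the torsion of the singular image $f_i\circ\gamma$, Fact~\ref{fact:B} applied to $ds^2$ shows that, up to congruence, $f_0$ coincides with $g_{\omega_0,\tau_0}$ and $f_1$ with $g_{\omega_1,\tau_1}$.

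I would then interpolate the data linearly. Setting
$$
  \omega_s(t):=(1-s)\,\omega_0(t)+s\,\omega_1(t),\qquad
  \tau_s(t):=(1-s)\,\tau_0(t)+s\,\tau_1(t)\qquad(0\le s\le 1),
$$
which are analytic in $(s,t)$, define $g_s:=g_{\omega_s,\tau_s}$ by Fact~\ref{fact:B}. The three asserted conclusions then follow from the three items of Fact~\ref{fact:B}. By construction $g_0=g_{\omega_0,\tau_0}$ and $g_1=g_{\omega_1,\tau_1}$ are congruent to $f_0$ and $f_1$, giving~\ref{item:E1}; item~\ref{item:B1} forces every $g_s$ to have first fundamental form $ds^2$, so the whole family is mutually isometric, giving~\ref{item:E2}; and item~\ref{item:B2} makes the limiting normal curvature of $g_s$ equal to $e^{\omega_s(t)}>0$, which never vanishes, giving~\ref{item:E3}.

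For the moreover part I would invoke the recognition criteria of \cite{KRSUY} and their refinements: being a cuspidal edge, swallowtail, or cuspidal cross cap is detected by the non-degeneracy and first/second-kind conditions together with sign or non-vanishing conditions on finitely many invariants along $\gamma$ (for instance the cuspidal curvature $\kappa_c$ distinguishing fronts from non-fronts, and further determinant conditions separating the three types). These invariants depend analytically on $s$ and satisfy the required conditions at the two endpoints, so the task reduces to checking that the relevant open conditions persist for every $s\in[0,1]$.

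The main obstacle I anticipate is the \emph{analyticity in the deformation parameter $s$}, not the formal interpolation. First, Fact~\ref{fact:B} leaves a two-fold congruence ambiguity whenever $\tau\not\equiv 0$, so I must confirm that the germ $g_{\omega,\tau}$ can be produced depending analytically on $(\omega,\tau)$ and that a single branch can be selected consistently along $s\in[0,1]$ matching $f_0$ and $f_1$ at the ends without a jump; this forces one into the Cauchy--Kovalevskaya/ODE construction underlying Fact~\ref{fact:B} in \cite{HNUY} to extract joint analyticity in $(s,t)$. Second, the moreover part requires that the discrete singularity type cannot change at any interior $s$, i.e.\ that none of the defining non-degeneracy conditions degenerates along the interval; establishing this last point, rather than the isometry and curvature statements, is the genuinely delicate step.
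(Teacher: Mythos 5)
First, a structural point: the paper does not prove Fact~\ref{fact:E} at all --- it is imported verbatim from \cite[Corollary E]{HNUY}, so there is no internal argument to compare yours against; what you have written is an attempt to re-derive one quoted fact from another (Fact~\ref{fact:B}). Judged on its own terms, the attempt has genuine gaps. The most basic one is a scope mismatch: Fact~\ref{fact:B}, as stated in this paper, applies only to singular points of the \emph{first kind}, whereas Fact~\ref{fact:E} concerns arbitrary non-degenerate singularities, and its ``moreover'' clause explicitly includes swallowtails, which are of the \emph{second kind}. At a second-kind point the singular image $\hat\gamma=f\circ\gamma$ satisfies $\hat\gamma'=0$ there (the null direction is tangent to $S(f)$, so $\gamma'$ lies in $\ker df$), hence your parametrizing data $(\omega,\tau)$ --- limiting normal curvature $e^{\omega}$ and torsion of $\hat\gamma$ --- is not even well defined; one needs the more general form of \cite[Theorem B]{HNUY}, which this paper pointedly does not state (cf.\ the remark after Fact~\ref{fact:HNUY-B}). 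So your reduction cannot produce the family, let alone the moreover part, outside the first-kind case.

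Second, the step you flag as an ``anticipated obstacle'' is in fact the crux of the theorem, and your proposal contains no mechanism to overcome it. For each $s$ with $\tau_s\not\equiv0$, Fact~\ref{fact:B} allows \emph{two} congruence classes of realizations; even granting that a branch of $g_{\omega_s,\tau_s}$ can be chosen analytically in $s$, the branch passing through $f_0$ at $s=0$ may terminate at $s=1$ in the realization \emph{not} congruent to $f_1$ (its isomer), and linear interpolation gives you no way to rule this out or repair it. The natural way out --- and the reason the uniqueness statement for $\tau\equiv0$ is part of Fact~\ref{fact:B} --- is to route the deformation through data whose torsion vanishes identically (for instance $\tau_s=\phi(s)\tau_0+\psi(s)\tau_1$ with analytic $\phi,\psi$ vanishing simultaneously at an interior parameter), where the two classes merge and a branch switch is possible; nothing of this sort appears in your argument. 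Finally, the moreover part is simply deferred: ``open conditions persist'' only guarantees persistence for $s$ near the endpoints, not on all of $[0,1]$. The correct argument is intrinsic, exactly parallel to how this paper proves Corollary~\ref{cor:interpolating}: for first-kind points with $\kappa_\nu\neq0$, being a cuspidal edge is equivalent to $\kappa_\Pi\neq0$, and $\kappa_\Pi$ is intrinsic, hence the same for all members of the isometric family; the analogous intrinsic characterizations for swallowtails and cuspidal cross caps are precisely what \cite{HNUY} supplies and what your sketch is missing.
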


By this fact and Corollary \ref{cor:intrinsic-criterion},
we also have the following result analogous to \cite[Corollary E]{HNUY}.

\begin{corollary}
\label{cor:interpolating}
Let $f_0$, $f_1$ be two analytic germs of ${5/2}$-cuspidal edges
whose limiting normal curvatures do not vanish.
Suppose that they are isometric to each other.
Then there exists {a continuous} 1-parameter family
of germs $g_t$ of ${5/2}$-cuspidal edges $(0\le t \le 1)$
satisfying the following properties\/{\rm :}
\begin{enumerate}
\item $g_0=f_0$ and $g_1=f_1$,
\item $g_t$ is isometric to $g_0$,
\item the limiting normal curvature of each $g_t$
does not vanish.
\end{enumerate}
\end{corollary}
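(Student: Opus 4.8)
The plan is to deduce Corollary \ref{cor:interpolating} directly from Fact \ref{fact:E} by certifying that the interpolating family $g_t$ produced there consists of genuine ${5/2}$-cuspidal edges, rather than merely arbitrary frontals with non-degenerate singularities. Since $f_0$ and $f_1$ are both germs of ${5/2}$-cuspidal edge with non-vanishing limiting normal curvature, they are in particular analytic frontal germs with non-degenerate singularities (indeed, singular points of the first kind) whose limiting normal curvature does not vanish. Thus Fact \ref{fact:E} applies verbatim and yields an analytic $1$-parameter family $g_t$ $(0\le t\le 1)$ with $g_0=f_0$, $g_1=f_1$, each $g_t$ isometric to $g_0$, and each $g_t$ having non-vanishing limiting normal curvature. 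The three listed conclusions are then immediate; what remains is to upgrade ``non-degenerate singularity with $\kappa_\nu\ne0$'' to ``${5/2}$-cuspidal edge'' for every intermediate $t$.

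The key step is to invoke the intrinsic criterion of Corollary \ref{cor:intrinsic-criterion}. First I would note that, since each $g_t$ is isometric to $g_0=f_0$, the first fundamental forms all coincide, and hence every intrinsic invariant is constant in $t$. By Theorem \ref{thm:intrinsic_1st} the secondary product curvature $r_\Pi$ is intrinsic, and the product curvature $\kappa_\Pi=\kappa_c\kappa_\nu$ is intrinsic by the result of \cite{MSUY} cited in Section \ref{sec:invs}. Because $f_0$ is a ${5/2}$-cuspidal edge with $\kappa_\nu\ne0$, Corollary \ref{cor:intrinsic-criterion} gives $\kappa_\Pi\equiv0$ along $S(f_0)$ and $r_\Pi(0)\ne0$ for $f_0$. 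Transporting these intrinsic quantities along the isometric family, each $g_t$ inherits $\kappa_\Pi\equiv0$ along its singular set together with $r_\Pi(0)\ne0$ at the base point, and each $g_t$ is non-$\nu$-flat (its limiting normal curvature does not vanish). Applying the ``if'' direction of Corollary \ref{cor:intrinsic-criterion} to each $g_t$ then shows that $g_t$ is a ${5/2}$-cuspidal edge at $0$ for every $t\in[0,1]$, which is exactly the assertion being proved.

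The one point requiring a little care—and the step I would expect to be the mild obstacle—is the identification of the singular points of $g_t$ as singular points of the first kind, since Corollary \ref{cor:intrinsic-criterion} is stated under that hypothesis. Here I would observe that being a singular point of the first kind is controlled by the data $\eta\lambda(0)\ne0$ (condition \ref{itm:etal} of Proposition \ref{prop:criteria}), which holds for $f_0$, and that Fact \ref{fact:E} is explicitly phrased in terms of non-degenerate singularities whose limiting normal curvature does not vanish; the non-vanishing of $\kappa_\nu$ forces the singular points to be of the first kind (a second-kind non-degenerate singular point has vanishing limiting normal curvature), so this property persists along the family. With that established, the chain of intrinsic invariants closes and Corollary \ref{cor:intrinsic-criterion} delivers the conclusion uniformly in $t$.
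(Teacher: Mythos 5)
Your overall route is exactly the paper's: apply Fact~\ref{fact:E} to obtain the interpolating family $g_t$, and then upgrade each $g_t$ from ``frontal germ with non-degenerate singularity and $\kappa_\nu\neq0$'' to ``$5/2$-cuspidal edge'' via the intrinsic criterion (Corollary~\ref{cor:intrinsic-criterion}), using that $\kappa_\Pi$ and $r_\Pi$ are intrinsic (Theorem~\ref{thm:intrinsic_1st} and \cite{MSUY}) and hence are transported unchanged along the isometric family. This is precisely the paper's (very terse) proof, and you are right to flag that the hypothesis ``singular point of the first kind'' in Corollary~\ref{cor:intrinsic-criterion} still needs to be verified for each $g_t$ --- a point the paper passes over in silence.

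However, your justification of that remaining step is wrong. You claim that a non-degenerate singular point of the second kind necessarily has vanishing limiting normal curvature; this is false. Swallowtails are non-degenerate singular points of the second kind, and they can have $\kappa_\nu\neq0$: for example, $f(u,v)=(u,\,4v^3+2uv,\,3v^4+uv^2+u^2)$ is $\A$-equivalent to the standard swallowtail (compose the standard one with the diffeomorphism $(X,Y,Z)\mapsto(X,Y,Z+X^2)$), and a direct computation along its singular curve $\gamma(t)=(-6t^2,t)$ gives limiting normal curvature $2$ at the origin. Indeed, Fact~\ref{fact:E} itself, quoted from \cite{HNUY}, explicitly allows $f_0$, $f_1$ to be swallowtails with non-vanishing limiting normal curvature, so your claimed implication would render that clause vacuous. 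The correct argument is intrinsic: the singular set of $g_t$ and the null directions along it are determined by the first fundamental form alone (they are the singular set and the null vectors of the induced Kossowski metric), and being of the first kind means exactly that the null direction is transversal to the singular curve, i.e., that the metric has a Type {\rm I} singular point (see Section~\ref{sec:Kossowski}). Since each $g_t$ is isometric to $g_0=f_0$ and $0$ is a first-kind singular point of $f_0$, the common first fundamental form has a Type {\rm I} singular point at $0$, hence $0$ is a first-kind singular point of every $g_t$. With this replacement your proof is complete and coincides with the paper's.
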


\begin{proof}
By Fact \ref{fact:E}, 
there exists {a continuous} 1-parameter family
of frontal germs $g_t$ $(0\le t \le 1)$
such that the items \ref{item:E1}--\ref{item:E3} in Fact \ref{fact:E} hold.
Since the limiting normal curvature of each $g_t$
does not vanish and $g_t$ is isometric to $g_0$ for each $t\in [0,1]$,
Corollary \ref{cor:intrinsic-criterion} yields that
$g_t$ has ${5/2}$-cuspidal edges.
Hence, 
the family $\{g_t\}_{t\in [0,1]}$ is the desired one.
\end{proof}

\subsection{Extrinsity of invariants}
Let $f:(\R^2,0)\to(\R^3,0)$ be a germ of 
a non-$\nu$-flat ${5/2}$-cuspidal edge,
and let $\gamma : (\R,0)\to(\R^2,0)$ be a germ of a singular curve of $f$.
Let $\hat{\gamma}$ be the regular curve in $\R^3$ 
given by $\hat{\gamma}:=f \circ \gamma$, with {arclength} parameter $t$.
Set $\vect{e}(t):= \hat{\gamma}'(t)$
and  $\vect{b}(t) := - \vect{e}(t) \times \hat{\nu}(t)$,
where $\hat{\nu}(t):= \nu(\gamma(t))$.
Then $\{ \vect{e}, \vect{b}, \hat{\nu} \}$
is an orthonormal frame along $\gamma$.
{Remark that, in general, $\vect{b}$ may not coincide with 
the binormal vector field of $\hat{\gamma}$ as a space curve.}
Moreover we have
\begin{equation}\label{eq:Frenet-Serret}
  \hat\gamma'' = \kappa_s \vect{b} + \kappa_\nu \hat{\nu}, \qquad
  \vect{b}' = -\kappa_s \vect{e} + \kappa_t \hat{\nu}, \qquad
  {\hat\nu' = -\kappa_\nu \vect{e} - \kappa_t \vect{b}}.
\end{equation}
Let $\kappa$, $\tau$ be the curvature and torsion functions
of $\hat \gamma$, respectively.
Substituting \eqref{eq:Frenet-Serret}
into $\kappa^2\tau=\det(\hat\gamma', \hat\gamma'', \hat\gamma''')$,
we have the following.

\begin{lemma}\label{lem:geodesic}
It holds that
$$
  \kappa = \sqrt{\kappa_s^2+\kappa_\nu^2},\qquad
  \tau = 
   \frac{\kappa_s' \kappa_\nu - \kappa_s \kappa_\nu'}{\kappa^2} - \kappa_t.
$$
In particular, if $\kappa_s(t)=0$ along $\gamma(t)$, 
then $\kappa_t(t)=-\tau(t)$ holds.
\end{lemma}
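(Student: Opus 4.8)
The plan is to treat $\{\vect{e},\vect{b},\hat\nu\}$ as a Frenet-type moving frame along $\hat\gamma$ and read off $\kappa$ and $\tau$ directly from the structure equations \eqref{eq:Frenet-Serret}. Since $t$ is the arc-length parameter, $\vect{e}=\hat\gamma'$ is a unit vector and $\kappa=|\hat\gamma''|$. The first equation of \eqref{eq:Frenet-Serret} writes $\hat\gamma''=\kappa_s\vect{b}+\kappa_\nu\hat\nu$ in terms of the orthonormal pair $\{\vect{b},\hat\nu\}$, so $\kappa=|\hat\gamma''|=\sqrt{\kappa_s^2+\kappa_\nu^2}$ is immediate. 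This settles the first formula.

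For the torsion, I would first compute $\hat\gamma'''$ by differentiating $\hat\gamma''=\kappa_s\vect{b}+\kappa_\nu\hat\nu$ and substituting the remaining two equations of \eqref{eq:Frenet-Serret} for $\vect{b}'$ and $\nu'$. Collecting terms in the basis $\{\vect{e},\vect{b},\hat\nu\}$ gives
$$\hat\gamma'''=-\kappa^2\vect{e}+(\kappa_s'-\kappa_t\kappa_\nu)\vect{b}+(\kappa_\nu'+\kappa_t\kappa_s)\hat\nu,$$
where the $\vect{e}$-coefficient $-(\kappa_s^2+\kappa_\nu^2)=-\kappa^2$ arises from the $-\kappa_s\vect{e}$ and $-\kappa_\nu\vect{e}$ contributions of $\vect{b}'$ and $\nu'$.

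Next I would insert these coordinate vectors into the identity $\kappa^2\tau=\det(\hat\gamma',\hat\gamma'',\hat\gamma''')$. Because $\{\vect{e},\vect{b},\hat\nu\}$ is a positively oriented orthonormal frame, the determinant of the three vectors equals the determinant of the $3\times3$ matrix of their components. Expanding along the first row, whose only nonzero entry is the $\vect{e}$-coefficient $1$ of $\hat\gamma'$, reduces everything to a single $2\times2$ determinant built from the $\vect{b}$- and $\hat\nu$-components of $\hat\gamma''$ and $\hat\gamma'''$; evaluating it and dividing by $\kappa^2$ yields the stated expression for $\tau$. The final assertion then follows at once: if $\kappa_s\equiv0$ along $\gamma$ then also $\kappa_s'\equiv0$, so the fractional term vanishes and $\tau=-\kappa_t$.

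The one delicate point — and the step I expect to require real care — is the sign bookkeeping. The orientation of $\{\vect{e},\vect{b},\hat\nu\}$ (which I would verify from $\vect{b}=-\vect{e}\times\hat\nu$, giving $\det(\vect{e},\vect{b},\hat\nu)=1$) and the chosen sign convention in $\kappa^2\tau=\det(\hat\gamma',\hat\gamma'',\hat\gamma''')$ must be tracked consistently, since a single misplaced sign flips the entire torsion formula. Beyond this there is no conceptual obstacle: the whole argument is linear algebra in the adapted frame.
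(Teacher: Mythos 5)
Your route is exactly the paper's own: the paper's entire proof consists of the instruction to substitute \eqref{eq:Frenet-Serret} into $\kappa^2\tau=\det(\hat\gamma',\hat\gamma'',\hat\gamma''')$, and your intermediate steps are correct --- $\kappa=|\hat\gamma''|=\sqrt{\kappa_s^2+\kappa_\nu^2}$, the expansion
\[
\hat\gamma'''=-\kappa^2\vect{e}+(\kappa_s'-\kappa_t\kappa_\nu)\vect{b}+(\kappa_\nu'+\kappa_t\kappa_s)\hat{\nu},
\]
and the orientation $\det(\vect{e},\vect{b},\hat{\nu})=+1$. The gap is the one step you defer, namely the sign bookkeeping you yourself flag as delicate: carried out, it does \emph{not} yield the stated expression. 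Your $2\times2$ determinant is
\[
\det\begin{pmatrix}\kappa_s & \kappa_\nu\\[1mm] \kappa_s'-\kappa_t\kappa_\nu & \kappa_\nu'+\kappa_t\kappa_s\end{pmatrix}
=\kappa_s\kappa_\nu'-\kappa_s'\kappa_\nu+\kappa_t\kappa^2,
\]
so that
\[
\tau=\frac{\kappa_s\kappa_\nu'-\kappa_s'\kappa_\nu}{\kappa^2}+\kappa_t
=-\left(\frac{\kappa_s'\kappa_\nu-\kappa_s\kappa_\nu'}{\kappa^2}-\kappa_t\right),
\]
i.e.\ the \emph{negative} of the lemma's right-hand side; correspondingly, when $\kappa_s\equiv0$ one gets $\kappa_t=\tau$, not $\kappa_t=-\tau$. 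Asserting that the expansion ``yields the stated expression'' is therefore unjustified; this is precisely where the proof, completed honestly, breaks down as a verification of the printed statement.

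The discrepancy is not a slip in your frame computation; it can be cross-checked against the normal form \eqref{eq:normalform}. There $\hat\gamma'(0)=(1,0,0)$, $\hat\gamma''(0)=(0,a_2,b_{20})$, $\hat\gamma'''(0)=(0,a_3,b_{30})$, so the stated convention forces $\kappa^2\tau(0)=a_2b_{30}-a_3b_{20}$, whereas the lemma's right-hand side, evaluated with the paper's own table $\kappa_s(0)=a_2$, $\kappa_s'(0)=a_3+2b_{12}b_{20}$, $\kappa_\nu(0)=b_{20}$, $\kappa_\nu'(0)=b_{30}-2a_2b_{12}$, $\kappa_t(0)=2b_{12}$, equals $(a_3b_{20}-a_2b_{30})/\kappa^2$. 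In other words, under the conventions both you and the paper adopt ($\vect{b}=-\vect{e}\times\hat\nu$, \eqref{eq:Frenet-Serret}, and $\kappa^2\tau=\det(\hat\gamma',\hat\gamma'',\hat\gamma''')$), the displayed identity holds only after an overall sign flip; it is literally correct only with the opposite torsion convention $\kappa^2\tau=-\det(\hat\gamma',\hat\gamma'',\hat\gamma''')$. This exposes a sign inconsistency in the paper itself --- its one-line proof has the same defect --- but a complete argument must confront it: either derive the formula with the sign the conventions force, or explicitly flip a convention. Your write-up papers over exactly this point. (The sign is harmless for the later applications, Corollaries \ref{cor:extrinsic-kt} and \ref{cor:Bias-extrinsic}, which use only that $\tau=\pm\kappa_t$ when $\kappa_s\equiv0$, but it matters for the lemma as stated.)
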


As a corollary of Theorem \ref{thm:isom-defo},
we prove the extrinsicity of 
the limiting normal curvature $\kappa_\nu$ 
(Corollary \ref{cor:extrinsic-knu}),
the cuspidal torsion $\kappa_t$
(Corollary \ref{cor:extrinsic-kt}),
and 
the bias $r_b$
(Corollary \ref{cor:Bias-extrinsic}).
We remark that the proof of Corollary \ref{cor:extrinsic-knu}
is analogous to that of \cite[Corollary D]{NUY}.

\begin{corollary}\label{cor:extrinsic-knu}
For ${5/2}$-cuspidal edges,
the limiting normal curvature $\kappa_\nu$
is an extrinsic invariant.
\end{corollary}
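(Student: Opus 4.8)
The goal is to exhibit, for a given non-$\nu$-flat $5/2$-cuspidal edge $f$, another $5/2$-cuspidal edge $\tilde f$ with the same first fundamental form but a different value of $\kappa_\nu$ along the singular curve. The natural strategy is to invoke the isometric deformation machinery just established in Theorem \ref{thm:isom-defo} (equivalently Fact \ref{fact:B}), which lets us prescribe the limiting normal curvature function freely while keeping the induced metric fixed.

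The plan is as follows. First I would record that $f$ is analytic with non-vanishing $\kappa_\nu$, so Fact \ref{fact:B} applies along the singular curve $\gamma(t)$. Writing $\omega_f(t)$ for the function with $e^{\omega_f(t)}=|\kappa_\nu(t)|$ of $f$ and $\tau_f(t)$ for the torsion of $\hat\gamma=f\circ\gamma$, I would then choose a \emph{different} prescription, say $\omega(t):=\omega_f(t)+c$ for a small nonzero constant $c$ (and $\tau:=\tau_f$), and let $g=g_{\omega,\tau}$ be the frontal-germ produced by Fact \ref{fact:B}. By item \ref{item:B1} it is isometric to $f$, and by item \ref{item:B2} its limiting normal curvature is $e^{\omega(t)}=e^{c}\,|\kappa_\nu(t)|$, which differs from that of $f$ at $0$ since $c\neq0$ and $\kappa_\nu(0)\neq0$.

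The remaining point is that $g$ must itself be a $5/2$-cuspidal edge, so that $\kappa_\nu$ is defined for it and the comparison is legitimate. This is exactly where Corollary \ref{cor:intrinsic-criterion} does the work: $g$ is isometric to $f$, hence the intrinsic quantities $\kappa_\Pi$ and $r_\Pi$ of $g$ agree with those of $f$; since $f$ is a $5/2$-cuspidal edge we have $\kappa_\Pi\equiv0$ along $S(f)$ and $r_\Pi(0)\neq0$, and $g$ is non-$\nu$-flat because its limiting normal curvature $e^{\omega}$ is nowhere zero. Therefore Corollary \ref{cor:intrinsic-criterion} guarantees $g$ is a non-$\nu$-flat $5/2$-cuspidal edge. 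Since the two isometric germs $f$ and $g$ have distinct values of $\kappa_\nu$ at $0$, the invariant $\kappa_\nu$ is extrinsic by definition.

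I do not expect a genuine obstacle here, since all the heavy lifting is packaged in Fact \ref{fact:B} and Corollary \ref{cor:intrinsic-criterion}; the argument is a short assembly of these two results. The only mild subtlety is bookkeeping about the sign/choice of the unit normal in item \ref{item:B2} of Fact \ref{fact:B} (``for a suitable choice of a unit normal vector field''), so I would phrase the perturbation in terms of $|\kappa_\nu|$ to stay safely inside the hypotheses, and note that multiplying by $e^{c}\neq1$ changes the absolute value and hence cannot be undone by a sign change. This confirms that $f$ and $g$ are isometric but have genuinely different limiting normal curvatures, establishing extrinsity.
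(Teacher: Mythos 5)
Your proposal is correct and follows essentially the same route as the paper: the paper's proof likewise applies Fact \ref{fact:B} to produce an isometric frontal-germ with an arbitrarily prescribed limiting normal curvature $e^{\omega(t)}$, and then invokes Corollary \ref{cor:intrinsic-criterion} to conclude the deformed germ is still a ${5/2}$-cuspidal edge. Your explicit choice $\omega(t)=\omega_f(t)+c$ and the remark about signs just make concrete what the paper summarizes as ``since we can choose $\omega(t)$ arbitrary.''
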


\begin{proof}
Let us take a real-analytic germ of a
non-$\nu$-flat ${5/2}$-cuspidal edge $f:(\R^2,0)\to(\R^3,0)$.
Denote by $\kappa(t)$ and $\tau(t)$ 
the curvature and torsion of $\hat{\gamma}(t):=f(\gamma(t))$,
respectively.
By Fact \ref{fact:B}, for a given analytic function $\omega(t)$,
there exists a non-$\nu$-flat real-analytic frontal-germ 
$g_{\omega,\tau}$ such that 
$g_{\omega,\tau}$ is isometric to $f$,
the limiting normal curvature function of $g_{\omega,\tau}$ is $e^{\omega(t)}$,
and $\tau(t)$ gives the torsion function of $g_{\omega,\tau}(\gamma(t))$.
Moreover, by Corollary \ref{cor:intrinsic-criterion},
$g_{\omega,\tau}$ is a ${5/2}$-cuspidal edge.
Since we can choose $\omega(t)$ arbitrarily,
the limiting normal curvature is extrinsic.
\end{proof}

\begin{corollary}\label{cor:extrinsic-kt}
For ${5/2}$-cuspidal edges,
the cuspidal torsion $\kappa_t$ is an extrinsic invariant.
\end{corollary}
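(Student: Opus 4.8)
The plan is to establish the extrinsity of the cuspidal torsion $\kappa_t$ by constructing, for a given non-$\nu$-flat ${5/2}$-cuspidal edge $f$, an isometric deformation along which $\kappa_t$ changes its value. Since extrinsity requires producing a surface $\tilde f$ with the same first fundamental form as $f$ but with a different value of $\kappa_t$, I would invoke Theorem \ref{thm:isom-defo} (equivalently Fact \ref{fact:B}) to generate such a family and then read off how $\kappa_t$ depends on the prescribed data.

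First I would fix the singular curve $\gamma(t)$ of $f$ and its image curve $\hat\gamma(t)=f(\gamma(t))$, with curvature $\kappa(t)$ and torsion $\tau(t)$. By Fact \ref{fact:B}, for any prescribed analytic function germs $\omega(t)$ and $\tau(t)$, there is an analytic frontal-germ $g_{\omega,\tau}$ isometric to $f$ whose limiting normal curvature is $e^{\omega(t)}$ and whose singular image has torsion $\tau(t)$; moreover, by Corollary \ref{cor:intrinsic-criterion}, non-$\nu$-flatness together with isometry forces $g_{\omega,\tau}$ to again be a ${5/2}$-cuspidal edge. The key point is that the isometry preserves the intrinsic quantities, in particular the singular curvature $\kappa_s(t)$, but the torsion $\tau$ of the singular image is free to be chosen. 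Now I would apply Lemma \ref{lem:geodesic}, which gives
$$
  \kappa_t = \frac{\kappa_s'\kappa_\nu - \kappa_s\kappa_\nu'}{\kappa^2} - \tau.
$$
This relation is the crux: if I prescribe two different torsion functions $\tau_1\not\equiv\tau_2$ while keeping the same $\omega$ (hence the same $\kappa_\nu$, since $\kappa_s$ and $\kappa_\nu$ together determine $\kappa=\sqrt{\kappa_s^2+\kappa_\nu^2}$ and $\kappa_s$ is intrinsic), the first term on the right is identical for both deformations, so the values of $\kappa_t$ differ by $\tau_2-\tau_1\not\equiv0$.

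Thus the two isometric ${5/2}$-cuspidal edges $g_{\omega,\tau_1}$ and $g_{\omega,\tau_2}$ share the same first fundamental form but have distinct cuspidal torsion, which is exactly the definition of extrinsity. The main obstacle I anticipate is not in the algebra but in verifying the hypotheses for the deformation to exist and remain a ${5/2}$-cuspidal edge: one must ensure that the prescribed torsion $\tau$ can be realized by an admissible regular space curve (via Theorem \ref{thm:isom-defo}, by choosing $\sigma(t)$ with curvature $\kappa>|\kappa_s|$ and the desired torsion), and that the resulting frontal is genuinely non-$\nu$-flat so that Corollary \ref{cor:intrinsic-criterion} applies and guarantees the ${5/2}$-cuspidal edge type is preserved. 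Once these are in hand, the conclusion follows immediately from the explicit $\tau$-dependence in Lemma \ref{lem:geodesic}, so the proof is short and parallels the argument for Corollary \ref{cor:extrinsic-knu}.
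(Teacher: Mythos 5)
Your proposal is correct, and it relies on the same two pillars as the paper's proof: the isometric--deformation machinery (Fact~\ref{fact:B}, equivalently Theorem~\ref{thm:isom-defo}) and Lemma~\ref{lem:geodesic} relating $\kappa_t$ to the torsion $\tau$ of the singular image. The difference lies in how the correction term $(\kappa_s'\kappa_\nu-\kappa_s\kappa_\nu')/\kappa^2$ is neutralized. The paper specializes to a (real-analytic) ${5/2}$-cuspidal edge with $\kappa_s\equiv0$ along the singular curve, so that Lemma~\ref{lem:geodesic} collapses to $\kappa_t=-\tau$; this costs an auxiliary remark verifying that such a surface exists (rotating the plane curve $(1+t^5,t^2)$ about the $z$-axis). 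You instead take an arbitrary non-$\nu$-flat real-analytic ${5/2}$-cuspidal edge and compare two deformations $g_{\omega,\tau_1}$ and $g_{\omega,\tau_2}$ with the \emph{same} prescribed $\omega$: since $\kappa_s$ is intrinsic and $\kappa_\nu=e^{\omega}$ is the same for both (and $\kappa_t$ does not depend on the choice of unit normal), the correction term coincides for the two germs, so their cuspidal torsions differ exactly by $\tau_2-\tau_1\not\equiv0$. Your variant is slightly more general --- it exhibits the extrinsity at \emph{every} non-$\nu$-flat analytic ${5/2}$-cuspidal edge rather than at one specially constructed example, and it dispenses with the existence remark --- while the paper's specialization buys the one-line identity $\kappa_t=-\tau$ and a shorter comparison. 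Both arguments need, and you correctly supply via Corollary~\ref{cor:intrinsic-criterion}, the fact that the deformed germs remain ${5/2}$-cuspidal edges (non-$\nu$-flatness plus the intrinsity of $\kappa_\Pi$ and $r_\Pi$); the only point to make explicit is that $f$ itself should be taken real-analytic, as Fact~\ref{fact:B} requires.
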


\begin{proof}
Let us take a real-analytic germ of a
non-$\nu$-flat ${5/2}$-cuspidal edge $f:(\R^2,0)\to(\R^3,0)$
satisfying $\kappa_s\equiv0$ along the singular curve $\gamma(t)$.
Denote by $\kappa(t)$ and $\tau(t)$ 
the curvature and torsion of $\hat{\gamma}(t):=f(\gamma(t))$,
respectively.
By Lemma \ref{lem:geodesic}, $\tau(t) = -\kappa_t(t)$.
Take an arbitrary analytic function $\tilde{\tau}(t)$.
Then, by the fundamental theorem of space curves, 
there exists an analytic regular space curve $\sigma(t)$ in $\R^3$
whose curvature and torsion functions 
are given by $\kappa(t)$ and $\tilde{\tau}(t)$, respectively.
Applying Theorem \ref{thm:isom-defo} to $\sigma(t)$,
there exists a real-analytic germ of a
non-$\nu$-flat ${5/2}$-cuspidal edge $g_{\sigma}:(\R^2,0)\to(\R^3,0)$
such that 
$g_{\sigma}$ is isometric to $f$
and $\sigma$ gives the image of the singular set of $g_{\sigma}$.
Since $\kappa_s\equiv0$ along the singular curve $\gamma(t)$,
Lemma \ref{lem:geodesic} yields that
the cuspidal torsion of $g_{\sigma}$ is $-\tilde{\tau}(t)$.
Since we can choose $\tilde{\tau}(t)$ arbitrarily,
the cuspidal torsion is extrinsic.
\end{proof}

We remark that 
an analytic non-$\nu$-flat ${5/2}$-cuspidal edge $f$
satisfying $\kappa_s\equiv0$ along $S(f)$
exists.
In fact, 
by rotating the plane curve $(x(t),z(t)):=(1+t^5,t^2)$ with respect to 
the $z$-axis,
we have such an example.

\begin{corollary}\label{cor:Bias-extrinsic}
For ${5/2}$-cuspidal edges,
the bias $r_b$ is an extrinsic invariant.
\end{corollary}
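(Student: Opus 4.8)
The plan is to follow the strategy of the proof of Corollary \ref{cor:extrinsic-kt}, now extracting information about $r_b$ from the formula \eqref{eq:K} in Lemma \ref{lem:K}. Since $f$ is non-$\nu$-flat, $\kappa_\nu$ does not vanish along the singular curve, so \eqref{eq:K} can be solved for the bias:
$$
r_b = \frac{3\left(K + \kappa_t^2\right)}{\kappa_\nu}
$$
along $S(f)$. The Gaussian curvature $K$ is intrinsic and hence unchanged under isometric deformation; thus the extrinsicity of $r_b$ will follow once I exhibit an isometric deformation along which the right-hand side takes a different value at $0$.

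First I would take a real-analytic non-$\nu$-flat ${5/2}$-cuspidal edge $f$ with $\kappa_s\equiv 0$ along the singular curve $\gamma(t)$; such an $f$ exists by the remark preceding the statement (obtained by rotating the profile curve $(1+t^5,t^2)$). Writing $\hat\gamma=f\circ\gamma$ for the singular image and letting $\kappa(t),\tau(t)$ be its curvature and torsion, Lemma \ref{lem:geodesic} gives $\kappa=|\kappa_\nu|$ and $\kappa_t=-\tau$, so along $S(f)$ the bias reads $r_b=3(K+\tau^2)/\kappa_\nu$. I then apply Theorem \ref{thm:isom-defo} to the same kind of comparison curve used in Corollary \ref{cor:extrinsic-kt}: fix an analytic function $\tilde\tau(t)$ with $\tilde\tau(0)^2\neq\tau(0)^2$, and by the fundamental theorem of space curves take a regular space curve $\sigma$ whose curvature equals $\kappa$ and whose torsion equals $\tilde\tau$. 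Since $\kappa(0)=|\kappa_\nu(0)|>0=|\kappa_s(0)|$, the hypothesis of Theorem \ref{thm:isom-defo} is met, yielding a real-analytic non-$\nu$-flat ${5/2}$-cuspidal edge $g_\sigma$ isometric to $f$ and with singular image $\sigma$.

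Because the singular curvature $\kappa_s$ is intrinsic, it still vanishes identically for $g_\sigma$, so Lemma \ref{lem:geodesic} applies again and gives $\kappa_{t}=-\tilde\tau$ and $|\kappa_{\nu}|=\kappa=|\kappa_\nu^{f}|$ for $g_\sigma$. Choosing the unit normal of $g_\sigma$ so that its limiting normal curvature agrees with that of $f$, formula \eqref{eq:K} then yields
$$
\frac{3\left(K(0)+\tilde\tau(0)^2\right)}{\kappa_\nu(0)}
\neq
\frac{3\left(K(0)+\tau(0)^2\right)}{\kappa_\nu(0)},
$$
the left side being the bias of $g_\sigma$ at $0$ and the right side that of $f$; they differ because $K(0)$ and $\kappa_\nu(0)$ are common while $\tilde\tau(0)^2\neq\tau(0)^2$. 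As $g_\sigma$ and $f$ share the same first fundamental form but have distinct bias, $r_b$ is extrinsic.

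The step I expect to be the crux is not computational but structural: one must ensure that the deformation genuinely changes $r_b$ rather than merely redistributing the quantities in \eqref{eq:K}. This is secured by the choice $\kappa_s\equiv 0$ together with the intrinsicity of $\kappa_s$, which forces the clean relation $\kappa_t=-\tau$ to persist for $g_\sigma$, and by prescribing the curvature of $\sigma$ to equal that of $\hat\gamma$ while freely varying its torsion. The former keeps $\kappa_\nu$ (and $K$) fixed, the latter moves $\kappa_t$, so the effect on $r_b$ is isolated in the single term $\kappa_t^2$ of the numerator.
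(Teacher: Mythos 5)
Your proof is correct, and it takes a genuinely different deformation from the paper's, although both rest on the same two ingredients: the identity $r_b = 3\left(K+\kappa_t^2\right)/\kappa_\nu$ obtained by solving \eqref{eq:K}, and Theorem \ref{thm:isom-defo} applied to a germ with $\kappa_s\equiv 0$. The paper additionally assumes $\tau\equiv 0$ and then deforms the \emph{curvature} of the singular image, $\kappa\mapsto\kappa+k$ with torsion kept zero: then $\kappa_t\equiv 0$ persists, $\kappa_\nu^k=\kappa+k$ moves, and $r_b^k=3K/(\kappa+k)$. You instead keep the curvature of $\sigma$ equal to $\kappa$ and deform the \emph{torsion} $\tau\mapsto\tilde\tau$: then $|\kappa_\nu|$ is preserved, $\kappa_t=-\tilde\tau$ moves, and the bias changes by $3\left(\tilde\tau(0)^2-\tau(0)^2\right)/\kappa_\nu(0)$. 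Your version buys real robustness: it needs no hypothesis on $K$, whereas the paper's conclusion $r_b^k=3K/(\kappa+k)$ exhibits a change only if $K\not\equiv 0$ along the singular curve, i.e.\ only if the initial germ has $r_b\not\equiv 0$. The example the paper supplies (rotating $(1+t^5,t^2)$) does \emph{not} satisfy this --- its profile cusp is balanced, so $r_b\equiv 0$ and hence $K\equiv 0$ along the singular set by \eqref{eq:K} --- so the paper's argument implicitly requires a different starting example (e.g.\ rotating $(1+t^4+t^5,t^2)$), while yours works verbatim with the paper's example by taking $\tilde\tau(0)\neq 0$. One small simplification available to you: instead of fixing the normal of $g_\sigma$ so that $\kappa_\nu^{g}=\kappa_\nu^{f}$, note that the sign-unambiguous product $\kappa_\nu r_b=3(K+\kappa_t^2)$ takes different values at $0$ for $f$ and $g_\sigma$ while $|\kappa_\nu|$ agrees; hence $|r_b|$ itself differs, which settles extrinsicity independently of any normal or orientation conventions.
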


\begin{proof}
Let us take a non-$\nu$-flat real-analytic ${5/2}$-cuspidal edge 
satisfying $\kappa_s\equiv0$ along the singular curve $\gamma(t)$.
Moreover, assuming $\tau\equiv0$, 
then by Lemma \ref{lem:geodesic},
it holds that $\kappa_t\equiv0$.
By Lemma \ref{lem:K},
\[
  K(\gamma(t)) =\frac{1}{3}r_b(t) \kappa_\nu(t).
\]
Let $k \geq0$ be a non-negative real number.
Since $\kappa_\nu\neq0$, 
then by Theorem \ref{thm:isom-defo},
there exists a family $\{g^k\}_{k \geq0}$
of real-analytic germs of ${5/2}$-cuspidal edges 
such that, for each $k\geq0$,
$g^k$ is non-$\nu$-flat,
$g^k$ has the same first fundamental form of $f$,
and 
the curvature function $\kappa^k(t)$ of $g^k(\gamma(t))$ 
is given by $\kappa^k(t)=\kappa(t)+k$ and the torsion is $0$.
Since $f$ and $g^k$ have the same first fundamental form for each $k\geq0$,
the singular curvature $\kappa_s^k(t)$ of $g^k$ 
vanishes identically along $\gamma(t)$.
Thus the limiting normal curvature of ${g^k}$
is ${\kappa_\nu^k(t)=\kappa(t)+k}\,(>0)$.
Hence the bias ${r_b^k}$ for ${g^k}$ is given by
\[
  {r_b^k(t)} 
  = 3\frac{K(\gamma(t))}{\kappa_\nu^k}
  = 3\frac{K(\gamma(t))}{\kappa(t)+k}.
\]
In particular, the bias is extrinsic.
\end{proof}

\begin{remark}
The secondary cuspidal curvature $r_c$ is also extrinsic,
since $r_\Pi$ is intrinsic (Theorem \ref{thm:intrinsic_1st}),
$\kappa_\nu$ is extrinsic (Corollary \ref{cor:extrinsic-knu}),
and $r_c$ is written as $r_c=r_\Pi/\kappa_\nu$ when $\kappa_\nu\neq0$.
Moreover, the product $\kappa_\nu r_b$ is also extrinsic,
since $\kappa_\nu r_b=3(K+\kappa_t^2)$ holds by \eqref{eq:K}
and $\kappa_t$ is extrinsic (Corollary \ref{cor:extrinsic-kt}).
Furthermore,
by a proof similar to that of Corollary \ref{cor:extrinsic-kt},
we can prove that the cuspidal torsion $\kappa_t$ 
for cuspidal edges is also extrinsic.
\end{remark}

\subsection{Summary of intrinsicity and extrinsicity}
We can summarize the intrinsicity and extrinsicity as follows.
As seen in Section \ref{sec:rhamphoidcusp}, 
the corresponding invariant of
the bias of cusps $r_b$ does not exist for cuspidal edges.

\begin{table}[!h]
\centering
\begin{tabular}{c|ccccc}
\hline
invariants&$\kappa_s$&$\kappa_\nu$&$\kappa_t$&
$\kappa_c$&$\kappa_\Pi=\kappa_c\kappa_\nu$\\
int/ext&intrinsic&extrinsic&extrinsic&extrinsic&intrinsic\\
\hline
\end{tabular}
\label{table:CE}
\caption{Intrinsicity and extrinsicity for cuspidal edges.}
\bigskip

\begin{tabular}{c|cccccccc}
\hline
invariants&$\kappa_s$&
$\kappa_\nu$&$\kappa_t$&$r_b$&$r_c$
&
$\kappa_\nu r_b$
&
$\displaystyle 
\kappa_\nu r_b - 3\kappa_t^2$&
$\displaystyle  r_\Pi = \kappa_\nu r_c$\\
int/ext&int&ext&ext&ext&ext
&ext&int&int
\\
\hline
\end{tabular}
\caption{Intrinsicity (int) and extrinsicity (ext) for ${5/2}$-cuspidal edges.
Here, we remark that 
the intrinsicity of the invariant in the seventh slot 
can be verified by the identity
$\kappa_\nu r_b - 3\kappa_t^2=3K$ (cf.\ \eqref{eq:K}).
With respect to the eighth slot, 
see Theorem \ref{thm:intrinsic_1st}.}
\label{table:RCE}
\end{table}

\section{Isometric realizations of intrinsic ${5/2}$-cuspidal edges}
\label{sec:Kossowski}

In this section, we deal with ${5/2}$-cuspidal edge singularities 
without ambient spaces. 
We give a definition of intrinsic ${5/2}$-cuspidal edges 
for singular points of Kossowski metrics,
and prove the existence of their isometric realizations
(Theorem \ref{thm:isom-realization})
as in \cite{NUY} and \cite{HNUY}.

First, we briefly introduce the basic properties of 
Kossowski metrics.
Further systematic treatments of Kossowski metrics 
are given in \cite{HHNSUY, SUY3, HNUY}.
Let $ds^2$ be a germ of a positive semi-definite metric
on $(\R^2,0)$.
Assume that $0$ is a singular point of $ds^2$,
that is, $ds^2$ is not positive-definite at $0$.
Denote by $S(ds^2)$ the set of singular points.
A non-zero tangent vector $\vect{v}$ at $0$
is called a {\it null vector} at $0$ if  
$ds^2(\vect{v},\vect{x})=0$ holds for every tangent vector $\vect{x}$ at $0$.
A local coordinate neighborhood $(U;u,v)$ 
is called {\it adjusted} at $0$ if
$\partial_v=\partial/\partial v$ gives a null vector at $(0,0)$.

If $(U,u,v)$ is a local coordinate neighborhood 
adjusted at $0$, then 
$F=G=0$ holds at $(0,0)$, where
\begin{equation}\label{eq:ds2}
  ds^2=E\,du^2+2F\,du\,dv+G\,dv^2.
\end{equation}
A singular point $0$ is called {\it admissible\/}
if there exists an local coordinate
neighborhood $(U;u,v)$ adjusted at $0$ 
such that
$
  E_v=2F_u,\,
  G_u=G_v=0
$
hold at $(0,0)$.

\begin{definition}[Kossowski metric]
If each singular point is admissible, 
and there exists a smooth function $\lambda$ 
defined on a neighborhood $(U;u,v)$ of $0$ 
such that
$$
EG-F^2=\lambda^2
$$
on $U$, and $d\lambda\neq0$ holds at $(0,0)$,
then $ds^2$ is called a (germ of a) {\it Kossowski metric},
where $E,F,G$ are smooth functions on $U$
satisfying \eqref{eq:ds2}.
Moreover, if we can choose $E$, $F$, $G$, and $\lambda$
to be analytic functions, then the Kossowski metric is called {\it analytic}.
\end{definition}

As shown in \cite{HHNSUY},
the first fundamental form of
a frontal-germ $f:(\R^2,0)\to (\R^3,0)$
whose singular points are all non-degenerate
is a Kossowski metric.

Let $ds^2$ be a germ of a Kossowski metric having a singular point at $0$.
By the condition $d\lambda\neq0$ at $(0,0)$,
the implicit function theorem yields that
there exists a regular curve
$\gamma(t)$ $(|t|<\varepsilon)$ in the $uv$-plane 
(called the {\it singular curve})
parametrizing $S(ds^2)$.
Then there exists a smooth non-zero vector field $\eta$
such that $\eta_q$ gives a null vector for each $q\in S(ds^2)$ near $(0,0)$.
We call $\eta$ a {\it null vector field}.

\begin{definition}
If $\eta$ is transversal to $S(ds^2)$ at $0$,
the singular point $0$ is called \emph{type {\rm I}} (or an \emph{$A_2$ point}\/).
\end{definition}

For a Kossowski metric $ds^2$ induced from a frontal-germ $f$,
type {\rm I} singular points of $ds^2$ correspond to 
singular points of the first kind of $f$.

According to \cite[Proposition 2.25]{HHNSUY},
for a type {\rm I} singular point,
there exists a coordinate system $(U;u,v)$ centered at $0$,
such that 
\begin{itemize}
\item the singular set $S(ds^2)$ is given by the $u$-axis,
\item $\partial_v$ gives the null vector field,
\item $F=0$ on $U$, and
\item $
  E(u,0)=1,~
  E_v(u,0)=G_v(u,0)=0,~
  G_{vv}(u,0)=2$
\end{itemize}
hold, where $E$, $F$, $G$ are smooth functions as in \eqref{eq:ds2}.
Such a coordinate system is called
a {\it normalized strongly adapted coordinate system}.
Since $G_{vv}(u,0)=2$ is equivalent to $\lambda_v(u,0)=\pm 1$,
by changing $v\mapsto -v$ if necessary,
we may assume that $\lambda_v(u,0)=1$.
(Hence, in the case of Kossowski metrics induced from frontals in $\R^3$,
the normalized strongly adapted coordinate systems
are normally-adjusted, cf.~Definition \ref{def:normally-adjusted}.)

We shall review the definition of the product curvature 
for type {\rm I} singular points defined in \cite{HHNSUY}.
Let $(U;u,v)$ be a normalized strongly adapted coordinate system
centered at a type {\rm I} singular point $0$.
Denote by $K$ the Gaussian curvature of $ds^2$ 
on $U\setminus\{v=0\}$.
By \cite[Proposition 2.27]{HHNSUY},
$v K(u,v)$ is a smooth function on $U$.
Then 
\[
  \tilde{\kappa}_\Pi:=\lim_{v\to 0}v K(u,v)
\]
does not depend on the choice of 
the normalized strongly adapted coordinate system 
satisfying $\lambda_v(0,0)=1$,
and is called the {\it product curvature}.

Now, assume that $\tilde{\kappa}_\Pi$ vanishes 
along the $u$-axis.
Then, $K$ is a bounded smooth function on $U$,
and 
\[
  \tilde{K}_\eta := \lim_{v\to 0}K_v(u,v)
\]
does not depend on the choice of 
the normalized strongly adapted coordinate system
satisfying $\lambda_v(0,0)=1$.
We call $\tilde{K}_\eta$ 
the {\it secondary product curvature}
or the {\it null-derivative Gaussian curvature}.

\begin{definition}\label{def:intrinsic-ramphoid}
Let $ds^2$ be a germ of a Kossowski metric 
at a type {\rm I} singular point $0$.
If the product curvature $\tilde{\kappa}_\Pi$
vanishes along $S(ds^2)$,
and the secondary product curvature $\tilde{K}_\eta$
does not vanish at $0$,
then the singular point $0$
is called an {\it intrinsic ${5/2}$-cuspidal edge}.
\end{definition}

The following lemma is a direct conclusion of 
Lemma \ref{lem:K} and Corollary \ref{cor:intrinsic-criterion}.

\begin{lemma}\label{lem:intrinsic-ramphoid}
Let $f: (\R^2,0)\to (\R^3,0)$ be a non-$\nu$-flat frontal-germ 
having a singular point $0$ of the first kind.
Denote by $ds^2$ the first fundamental form of $f$.
Then, $f$ at $0$ is a ${5/2}$-cuspidal edge
if and only if 
$0$ is an intrinsic ${5/2}$-cuspidal edge 
{\rm (}as a singular point of the Kossowski metric $ds^2)$.
\end{lemma}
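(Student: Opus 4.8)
The plan is to read the equivalence off directly from Corollary~\ref{cor:intrinsic-criterion}, once the purely intrinsic quantities $\tilde\kappa_\Pi$ and $\tilde K_\eta$ appearing in Definition~\ref{def:intrinsic-ramphoid} have been matched with the extrinsic quantities $\kappa_\Pi$ and $r_\Pi$ used in that corollary. Concretely, I would establish the two identifications
\[
  \tilde\kappa_\Pi=\kappa_\Pi \quad\text{along }S(f),
  \qquad
  \tilde K_\eta(0)=K_\eta,
\]
for the Kossowski metric $ds^2$ induced by $f$, and then invoke \eqref{eq:Kv} of Lemma~\ref{lem:K}, namely $K_\eta=\tfrac1{24}r_\Pi$, to turn the non-vanishing condition on $\tilde K_\eta$ into the non-vanishing of $r_\Pi(0)$.

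For the first identification the key observation is that the Gaussian curvature $K$ of the frontal $f$, computed on the regular set, equals the Gaussian curvature of its first fundamental form $ds^2$, since the latter is an intrinsic quantity. Hence the limit $\lim_{v\to0}vK$ computed extrinsically (which gives $\kappa_\Pi$ by \cite{MSUY}) and the same limit taken as the definition of $\tilde\kappa_\Pi$ in Section~\ref{sec:Kossowski} coincide. For the second identification I would use the remark already recorded in the excerpt that, for Kossowski metrics induced from frontals in $\R^3$, a normalized strongly adapted coordinate system is automatically normally-adjusted in the sense of Definition~\ref{def:normally-adjusted}. Thus both $\tilde K_\eta=\lim_{v\to0}K_v(u,v)$ and $K_\eta=K_v(0,0)$ are evaluated in the same class of coordinates, and agree at the base point, so $\tilde K_\eta(0)=K_\eta$.

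With these two identifications the equivalence unwinds mechanically: the point $0$ is an intrinsic ${5/2}$-cuspidal edge iff $\tilde\kappa_\Pi\equiv0$ along $S(ds^2)$ and $\tilde K_\eta(0)\neq0$; by the identifications this holds iff $\kappa_\Pi\equiv0$ along $S(f)$ and $K_\eta\neq0$, i.e., by \eqref{eq:Kv}, iff $\kappa_\Pi\equiv0$ and $r_\Pi(0)\neq0$; and this is exactly the criterion of Corollary~\ref{cor:intrinsic-criterion} for $f$ to be a ${5/2}$-cuspidal edge at $0$ under the standing non-$\nu$-flat hypothesis. I expect the main obstacle to be the first identification $\tilde\kappa_\Pi=\kappa_\Pi$: one must check that the coordinate normalizations built into the intrinsic definition (normalized strongly adapted, with $\lambda_v(0,0)=1$) and into the extrinsic product curvature of \cite{MSUY} are genuinely compatible, so that the two limits $\lim_{v\to0}vK$ really are the same function along $S(f)$. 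Once the Gaussian curvature of $f$ is recognized as intrinsic and the coordinate dictionary between \cite{HHNSUY} and \cite{MSUY} is applied, everything else is routine bookkeeping, which is why this statement may be presented as a direct consequence of Lemma~\ref{lem:K} and Corollary~\ref{cor:intrinsic-criterion}.
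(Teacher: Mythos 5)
Your proposal is correct and follows essentially the same route as the paper: the paper presents this lemma precisely as a direct consequence of Lemma \ref{lem:K} (in particular \eqref{eq:Kv}) and Corollary \ref{cor:intrinsic-criterion}, which are exactly the two ingredients you combine, with the identification of the intrinsic quantities $\tilde{\kappa}_\Pi$, $\tilde{K}_\eta$ with $\kappa_\Pi$, $K_{\eta}$ (via the remark that normalized strongly adapted coordinates are normally-adjusted for metrics induced from frontals) being the implicit bridging step that you make explicit.
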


We remark that the assumption of the non-$\nu$-flatness 
cannot be removed,
since there exists 
a cuspidal edge with vanishing limiting normal curvature
such that the corresponding singular points of $ds^2$
are intrinsic ${5/2}$-cuspidal edges.
\begin{example}\label{ex:int-rhamphoid-CE}
{
Let $f: (\R^2,0)\to (\R^3,0)$ be 
a map-germ defined by
$
  f(u,v) = (u,u^2+v^2,v^3+v^4).
$
The first fundamental form $ds^2$ is written as
$$
  ds^2=(4u^2+1)du^2 + 8uv\,du\,dv 
  +v^2 \left(4+v^2(4v+3)^2\right)dv^2.
$$
We can check that
$f$ is a front with a unit normal 
$\nu(u,v)=\hat\lambda^{-1}(2uv(4v+3),-v(4v+3),2)$,
where we set
$\hat\lambda:=\sqrt{4+v^2(4u^2+1)(4v+3)^2}$.
Since the signed area density function 
$\lambda$ is written as
$\lambda=v\hat\lambda$,
the $u$-axis gives the singular set 
$\{(u,0)\,;\,u\in \R\}$.
As every singular point $(u,0)$
is of the first kind,
$f$ is a cuspidal edge.
The limiting normal curvature 
$\kappa_\nu(u)$ is identically zero
along the $u$-axis, so the product curvature is too.
The Gaussian curvature $K$
is given by $K=-4(4v+3)(8v+3)/\hat\lambda^4$,
which satisfies $K_v(u,0)=-9$.
Hence, 
the corresponding singular points of $ds^2$
are intrinsic ${5/2}$-cuspidal edges,
although $f$ is a cuspidal edge.}
\end{example}

{Kossowski \cite{Kossowski} proved 
a realization theorem of Kossowski metrics 
which admit only singular points
satisfying $K\,dA\ne0$.
In \cite{HNUY},
a realization theorem of Kossowski metrics 
at an arbitrary singular point is proved.
In the following Fact \ref{fact:HNUY-B},
we introduce the realization theorem, 
which is a restricted version of \cite[Theorem B]{HNUY}
so that $\gamma(t)$ is chosen to be the singular curve
(for more details, see \cite{HNUY}).}

\begin{fact}[{cf.\ \cite[Theorem B]{HNUY}}]
\label{fact:HNUY-B}
Let $ds^2$ be a germ of an analytic Kossowski metric
on $(\R^2,0)$,
and let $\gamma(t)$ $(|t|<\varepsilon)$ 
be a singular curve passing through a singular point $0=\gamma(0)$.
Assume that $0$ is a type {\rm I} singular point of $ds^2$.
Then, for given analytic function-germs $\omega(t)$, $\tau(t)$ at $t=0$,
there exists an analytic frontal-germ 
$f=f_{\omega,\tau} : (\R^2,0)\to (\R^3,0)$
satisfying the following properties\/{\rm :}
\begin{enumerate}
\item $ds^2$ is the first fundamental form of $f$,
\item the limiting normal curvature function germ along 
the singular curve $\gamma$ coincides with $e^{\omega(t)}$ 
for a suitable choice of a unit normal vector field $\nu$, 
\item $\tau(t)$ gives the torsion function 
germ of $\hat \gamma(t):=f\circ \gamma(t)$.
\end{enumerate}
The possibilities for the congruence classes of such an 
$f$ are at most two.
Moreover, if $\tau$ vanishes identically
$($i.e $\hat \gamma$ is a planar curve$)$,
then the congruence class of $f$ is uniquely determined.
\end{fact}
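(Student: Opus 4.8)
The plan is to reduce the realization to an analytic Cauchy problem that can be solved by the Cauchy--Kovalevskaya theorem, with the initial data prescribed along the singular curve. First I would pass to a normalized strongly adapted coordinate system $(u,v)$ centered at $0$, so that $S(ds^2)=\{v=0\}$, $\partial_v$ is the null vector field, $F\equiv0$, and $E(u,0)=1$, $E_v(u,0)=G_v(u,0)=0$, $G_{vv}(u,0)=2$; such coordinates exist for a Type {\rm I} singular point. In these coordinates $u$ is the arc-length parameter of the candidate singular image $\hat\gamma(u)=f(\gamma(u))$ (since $E(u,0)=1$), and the intrinsic singular curvature $\kappa_s(u)$ is computable from $ds^2$ alone. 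Setting the limiting normal curvature to $\kappa_\nu=e^{\omega}$, the curvature of $\hat\gamma$ is $\kappa=\sqrt{\kappa_s^2+\kappa_\nu^2}$ by Lemma~\ref{lem:geodesic}, while $\tau$ is prescribed directly as its torsion; thus $\hat\gamma$ is determined uniquely up to a rigid motion by the fundamental theorem of space curves, and the cuspidal torsion $\kappa_t$ is recovered from the same lemma. This simultaneously fixes the orthonormal frame $\{\vect{e},\vect{b},\hat\nu\}$ along $\{v=0\}$ via \eqref{eq:Frenet-Serret}, which will serve as the initial data for the frame of $f$.

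Next I would set up the structure equations for a renormalized frame adapted to the sought realization, rather than the raw Gauss--Weingarten system for $(f_u,f_v,\nu)$: because $ds^2$ degenerates along $\{v=0\}$ (where $G$ vanishes to second order), the Christoffel symbols blow up there. After renormalizing the frame --- for instance replacing $f_v$ by $f_v/v$, equivalently working with the unit vector $\vect{b}=\lim f_v/|f_v|$ extended off the singular curve --- the admissibility conditions $E_v=2F_u$, $G_u=G_v=0$ and the normalization $\lambda_v=1$ ensure that the rewritten first-order system has analytic coefficients across $\{v=0\}$, with this locus non-characteristic for the $v$-evolution. The compatibility of the system is governed by the Gauss and Codazzi--Mainardi equations: the Gauss equation determines one extrinsic coefficient up to a sign, while Codazzi propagates the remaining ones. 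Prescribing $\kappa_\nu=e^{\omega}$ and $\tau$ fixes the free data along $\{v=0\}$, so Cauchy--Kovalevskaya yields a unique analytic solution with the chosen initial frame; integrating $f_u,f_v$ and fixing the integration constant by the rigid motion selected for $\hat\gamma$ produces the analytic map-germ $f=f_{\omega,\tau}$.

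Finally I would verify the three asserted properties --- that $ds^2$ is the first fundamental form of $f$, that $e^{\omega}$ is its limiting normal curvature, and that $\tau$ is the torsion of $\hat\gamma$ --- all of which hold by construction, together with $0$ being a singular point of the first kind (inherited from the Type {\rm I} / normalized adapted structure). The bound ``at most two congruence classes'' comes from the single sign ambiguity in solving the Gauss equation for the undetermined second-form coefficient, and this ambiguity collapses exactly when $\tau\equiv0$ (the planar-image case), giving uniqueness there. I expect the \emph{main obstacle} to be precisely the degeneracy of $ds^2$ along the singular curve: the classical Bonnet theorem requires a positive-definite metric and a nondegenerate second fundamental form, neither of which holds at $\{v=0\}$, so the whole difficulty lies in choosing the renormalized frame and verifying that the Kossowski conditions make the resulting Cauchy problem analytic and non-characteristic across the singularity while keeping the Gauss--Codazzi integrability compatible there.
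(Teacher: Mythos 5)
First, a point of context: the paper itself does not prove this statement --- it is recorded as a \emph{Fact}, imported with the citation \cite{HNUY}, so there is no internal proof to compare yours against; the relevant benchmark is the strategy of that reference (and of Kossowski's realization theorem), which your outline broadly resembles. Your first paragraph is sound as far as it goes: in a normalized strongly adapted coordinate system $u$ is the arc-length of the singular image, $\kappa_s$ is intrinsic, $\kappa=\sqrt{\kappa_s^2+e^{2\omega}}>0$, and $\kappa_t$ is recovered from the prescribed $\tau$ via Lemma \ref{lem:geodesic}, so the frame along $\{v=0\}$ is pinned down by \eqref{eq:Frenet-Serret}.

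The genuine gap is that the central step of the argument is asserted rather than carried out. Everything that makes the theorem nontrivial --- that after replacing $f_v$ by $f_v/v$ the structure equations have coefficients analytic across $\{v=0\}$, that this locus is non-characteristic for the resulting determined system, that the Gauss--Codazzi compatibility can be arranged with only the data $(\kappa_s,e^{\omega},\kappa_t)$ prescribed on the curve, and, crucially, that the map obtained by integrating the solved frame actually has first fundamental form $ds^2$ --- is precisely the content to be proven, and you defer all of it (you even label it ``the main obstacle''). In particular, ``by construction'' does not dispose of property (1): in frame-based realizations one must separately show that the induced metric of the solution coincides with $ds^2$, typically by proving the difference satisfies a homogeneous system with vanishing Cauchy data along $\{v=0\}$. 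A second, independent gap is the counting of congruence classes. You attribute the two-ness to a sign choice in the Gauss equation and claim it ``collapses exactly when $\tau\equiv0$,'' but you give no argument for the collapse. The natural argument is a symmetry one: an orientation-reversing isometry of $\R^3$ preserves the first fundamental form and $\kappa_\nu$ but reverses the torsion of the singular image, hence carries solutions for the data $(\omega,\tau)$ to solutions for $(\omega,-\tau)$; when $\tau\equiv0$ this reflection (across the plane of the planar curve $\hat\gamma$) interchanges the two candidate solutions and exhibits them as congruent, whereas for $\tau\not\equiv0$ it provides no such identification. Without this (or an equivalent) argument, the final two assertions of the Fact remain unproven.
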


Using Fact \ref{fact:HNUY-B} and
an argument similar to that of Theorem \ref{thm:isom-defo}, 
we have the following realization theorem of 
Kossowski metrics with intrinsic ${5/2}$-cuspidal edges
with prescribed singular images,
which is an analogous to a result of 
\cite[Theorem 12]{NUY} for cuspidal edges
and \cite[Corollary D]{HNUY} for cuspidal cross caps.

\begin{theorem}\label{thm:isom-realization}
Let $ds^2$ be a germ of an analytic Kossowski metric
on $(\R^2,0)$.
Assume that $0$ is an intrinsic\/ ${5/2}$-cuspidal edge.
Take a germ of an analytic regular space curve\/ $\sigma(t)$
such that its curvature function\/ $\kappa(t)$ satisfies
$$
  \kappa > |\kappa_s|
$$
at\/ $0$,
where\/ $\kappa_s$ is the singular curvature of\/ $ds^2$ 
along the singular curve\/ $\gamma$.
Then there exists a germ of an analytic\/ ${5/2}$-cuspidal edge 
$f_{\sigma} : (\R^2,0)\to (\R^3,0)$
with non-vanishing limiting normal curvature
such that
\begin{enumerate}
\item
the first fundamental form of\/ $f_{\sigma}$ coincides with\/ $ds^2$,
\item 
the singular image\/ $f_{\sigma}\circ \gamma$ coincides with\/ $\sigma$.
\end{enumerate}
The possibilities for congruence classes of such an\/ $f_{\sigma}$ 
are at most two unless\/ $\tau$ vanishes identically.
On the other hand, if\/ $\tau$ vanishes identically\/
$($i.e., $\sigma$ is a planar curve\/$)$,
then the congruence class of\/ $f_{\sigma}$ is uniquely determined.
\end{theorem}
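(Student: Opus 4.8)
The plan is to mirror the proof of Theorem~\ref{thm:isom-defo}, replacing the isometric-deformation statement (Fact~\ref{fact:B}) by the realization statement for Kossowski metrics (Fact~\ref{fact:HNUY-B}). Since $\kappa>|\kappa_s|$ at $0$ and both $\kappa$ and $\kappa_s$ are analytic, the quantity $\kappa^2-\kappa_s^2$ is analytic and strictly positive near $0$, so one may set
$$
\omega(t):=\frac{1}{2}\log\left(\kappa(t)^2-\kappa_s(t)^2\right),
$$
which is an analytic function-germ at $t=0$. Let $\tau(t)$ denote the torsion function of $\sigma(t)$. Applying Fact~\ref{fact:HNUY-B} to the data $(ds^2,\gamma,\omega,\tau)$ --- legitimate because $0$ is a Type {\rm I} singular point, being an intrinsic $5/2$-cuspidal edge (Definition~\ref{def:intrinsic-ramphoid}) --- produces an analytic frontal-germ $f_\sigma:=f_{\omega,\tau}:(\R^2,0)\to(\R^3,0)$ whose first fundamental form is $ds^2$, whose limiting normal curvature along $\gamma$ is $\kappa_\nu=e^{\omega}$, and whose singular image $\hat\gamma:=f_\sigma\circ\gamma$ has torsion $\tau$. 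This settles conclusion (1) immediately.

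For conclusion (2) I would argue that $\hat\gamma$ is congruent to $\sigma$ and then normalize. Because the singular curvature is intrinsic, the singular curvature of $f_\sigma$ along $\gamma$ equals $\kappa_s$. By the choice of $\omega$ one has $\kappa_\nu=e^\omega=\sqrt{\kappa^2-\kappa_s^2}$, so Lemma~\ref{lem:geodesic} gives that the curvature of $\hat\gamma$ is $\sqrt{\kappa_s^2+\kappa_\nu^2}=\kappa$, matching the curvature of $\sigma$; its torsion is $\tau$, which is the torsion of $\sigma$ by construction. Hence $\hat\gamma$ and $\sigma$ have the same curvature and torsion functions, and the fundamental theorem of space curves shows they are congruent. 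Composing $f_\sigma$ with a suitable orientation-preserving isometry of $\R^3$ (which changes neither the first fundamental form nor the $\A$-class) we may therefore assume $\hat\gamma=\sigma$, giving conclusion (2).

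It remains to check that $f_\sigma$ is genuinely a $5/2$-cuspidal edge with non-vanishing limiting normal curvature. Since $\kappa_\nu=e^\omega>0$, the map $f_\sigma$ is non-$\nu$-flat and has non-vanishing limiting normal curvature. Moreover $0$ is a singular point of the first kind (Type {\rm I} points of $ds^2$ correspond to first-kind singular points of any realization), and the first fundamental form of $f_\sigma$ is $ds^2$, at which $0$ is an intrinsic $5/2$-cuspidal edge by hypothesis. Lemma~\ref{lem:intrinsic-ramphoid} then yields that $f_\sigma$ has a $5/2$-cuspidal edge at $0$. Finally, the statement on the number of congruence classes transfers verbatim from the corresponding assertion in Fact~\ref{fact:HNUY-B}: at most two in general, and exactly one when $\tau\equiv0$ (i.e.\ $\sigma$ is planar).

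The genuinely substantive steps are packaged inside the cited results, so the main point requiring care is conclusion (2): one must combine the \emph{intrinsicality} of $\kappa_s$ with Lemma~\ref{lem:geodesic} to see that the curvature of the realized singular image is \emph{forced} to equal $\kappa$, rather than merely bounded by it, and only then invoke the rigidity of space curves to pin the image to $\sigma$ up to congruence. The remainder is a direct application of Fact~\ref{fact:HNUY-B} and Lemma~\ref{lem:intrinsic-ramphoid}, exactly paralleling the proof of Theorem~\ref{thm:isom-defo}.
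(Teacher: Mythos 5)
Your proposal is correct and follows exactly the route the paper intends: the paper gives no separate proof of Theorem~\ref{thm:isom-realization}, stating only that it follows from Fact~\ref{fact:HNUY-B} and ``an argument similar to that of Theorem~\ref{thm:isom-defo},'' which is precisely your argument (choose $\omega=\tfrac12\log(\kappa^2-\kappa_s^2)$ and $\tau$ the torsion of $\sigma$, realize via Fact~\ref{fact:HNUY-B}, then conclude the $5/2$-cuspidal edge property from Lemma~\ref{lem:intrinsic-ramphoid}). Your explicit verification of conclusion (2) --- using the intrinsicality of $\kappa_s$ together with Lemma~\ref{lem:geodesic} and the fundamental theorem of space curves to force $f_\sigma\circ\gamma$ congruent to $\sigma$ --- is a detail the paper leaves implicit (also in the proof of Theorem~\ref{thm:isom-defo}), and you supply it correctly.
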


\begin{proof}
{Set $\omega(t)$ to be
$\omega(t) := \log \sqrt{\kappa(t)^2 - \kappa_s(t)^2}.$
Let $\tau(t)$ be the torsion function of $\sigma(t)$.
By Fact \ref{fact:HNUY-B}, 
there exists an analytic frontal-germ 
$f_{\sigma}:=f_{\omega,\tau}:(\R^2,0)\to(\R^3,0)$
such that the items (1)--(3) in Fact \ref{fact:HNUY-B} hold.
Thus, it suffices to show that
$f_{\sigma}$ has a ${5/2}$-cuspidal edge at $0$.
Since
the first fundamental form of $f_{\sigma}$ coincides with $ds^2$, 
the product curvature $\kappa_\Pi$ 
and the secondary product curvature $r_\Pi$
of $f_{\sigma}$ coincide with
those of $ds^2$, respectively.
Therefore, by Corollary \ref{cor:intrinsic-criterion},
we have that $f_{\sigma}:(\R^2,0)\to(\R^3,0)$ 
has a ${5/2}$-cuspidal edge at $0$.}
\end{proof}

\begin{remark}
{
We may suppose that $\sigma(t)$ is defined for $|t|<\varepsilon$. 
By Theorem \ref{thm:isom-realization}, 
there exists a frontal $f_\pm:(\R^2,0)\to (\R^3,0)$ 
having a $5/2$-cuspidal edge at $p=\sigma(0)$ 
such that $f_-$ is isometric to $f_+$ 
and $\sigma(t)=f_\pm\circ \gamma(t)$. 
On the other hand, reversing the orientation of $\sigma(t)$,
there exists a frontal $g_\pm:(\R^2,0)\to (\R^3,0)$ 
having a $5/2$-cuspidal edge at $p=\sigma(0)$ 
such that $g_-$ is isometric to $g_+$ 
and $\sigma(-t)=g\circ \gamma(-t)$. 
Thus if $\sigma$ is not planar,
there are totally four distinct $5/2$-cuspidal edges
$f_+,f_-,g_+$ and $g_-$
with the common first fundamental form whose
image of the singular curve coincides with 
$\sigma((-\varepsilon,\varepsilon))$ in general, 
see \cite{HNSUY} for details.}
\end{remark}

\appendix

\section{Proofs {of propositions}}\label{sec:coord}
\subsection{Proof of Proposition\/ {\rm \ref{prop:normalform}}}
We show the following 
proposition,
which is a normal form
of a singular point of the first kind.
\begin{proposition}\label{prop:normalformf}
Let\/ $f:(\R^2,0)\to(\R^3,0)$ be a frontal and\/ $0$ a singular
point of the first kind.
Then there exist a coordinate system\/ $(u,v)$ and 
an isometry\/ {$A$}
of\/ $\R^3$ such that
$$
A\circ f(u,v)
=(u,a_2(u)+v^2/2,a_3(u)+v^2 b_3(u,v))
$$
for some functions\/ $a_2,a_3,b_3$.
If\/ $0$ is a\/ ${5/2}$-cuspidal edge,
$b_3$ has the form\/
$b_3=c_3(u)+v^2c_4(u)+v^3c_5(u,v)$ for
some functions\/ $c_3,c_4,c_5$.
\end{proposition}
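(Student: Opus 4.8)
The plan is to establish the normal form in two stages, following the strategy of \cite[Theorem 3.1]{MS} for ordinary cuspidal edges. First I would prove the general singular-point-of-the-first-kind normal form (the displayed form with $a_2,a_3,b_3$), and then impose the extra $5/2$-cuspidal edge conditions to deduce the special structure of $b_3$. Since $0$ is a singular point of the first kind, I may begin by choosing a coordinate system $(u,v)$ with $S(f)=\{v=0\}$ and $\eta=\partial_v$ the null vector field, so that $f_v(u,0)=0$ along the singular curve. The curve $u\mapsto f(u,0)$ is then a regular space curve, and after composing with a suitable isometry $A$ of $\R^3$ (to place the singular image and adjust the frame) and an admissible reparametrization, I can arrange that the first coordinate of $A\circ f$ is exactly $u$. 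This uses that $f_u(0,0)\neq 0$ together with the first-kind condition.

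Next I would fit the remaining two coordinate functions to the stated shape. Because $f_v$ vanishes on $v=0$, each coordinate function is even-to-first-order in $v$, so Taylor expansion in $v$ gives $f=(u,\ \text{(2nd comp)},\ \text{(3rd comp)})$ where the $v$-dependence enters at order $v^2$. A rotation in the last two coordinates (part of the isometry $A$) normalizes the $v^2$ coefficient of the second component to $1/2$ while killing the $v^2$ term of the third; this produces $a_2(u)+v^2/2$ in the second slot and $a_3(u)+v^2 b_3(u,v)$ in the third. The key point is that the quadratic-in-$v$ leading behavior is governed by $f_{vv}(u,0)$, which is nonzero and transverse to $f_u$ for a singular point of the first kind, so the rotation is well-defined and smooth in $u$. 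I expect this first stage to be essentially the argument of \cite{MS} reproduced in our setting.

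For the second stage, I would impose the $5/2$-cuspidal edge criterion from Proposition~\ref{prop:criteria}. The decisive input is condition~\ref{itm:no23}, $\det(\xi f,\eta^2 f,\eta^3 f)=0$ on $S(f)$, equivalently (by Lemma~\ref{lem:no23nofront}) $\eta\nu=0$ on $S(f)$, i.e.\ $f$ is not a front. In the coordinates above this forces a linear dependence $\tilde\eta^3 f(u,0)=l(u)\tilde\eta^2 f(u,0)$, which after adjusting $\tilde\eta$ as in \eqref{eq:vf100}--\eqref{eq:vf200} translates into a relation on the $v^3$-coefficient of the third component. Expanding $b_3(u,v)=c_3(u)+vc(u)+v^2c_4(u)+\cdots$ and reading off the coefficient of $v^3$ in the third component of $f$, the not-a-front condition precisely annihilates the $v^1$-term $c(u)$, leaving $b_3=c_3(u)+v^2c_4(u)+v^3c_5(u,v)$ as claimed. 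The main obstacle will be bookkeeping: carefully matching the intrinsic null-vector conditions \eqref{eq:eta} to the vanishing of a specific Taylor coefficient, and checking that the normalizing isometry and reparametrization can be chosen smoothly in $u$ without disturbing the already-normalized lower-order terms. This coefficient-tracking, rather than any conceptual difficulty, is where the real work lies.
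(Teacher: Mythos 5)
Your overall skeleton (adapted coordinates with $S(f)=\{v=0\}$ and $\eta=\partial_v$, Taylor expansion in $v$, then imposing the not-a-front condition to kill one Taylor coefficient) matches the paper, and your second stage is essentially right: in the normal form one computes $\det(f_u,f_{vv},f_{vvv})(u,0)=6(b_3)_v(u,0)$, so condition \ref{itm:no23} of Proposition \ref{prop:criteria} (equivalently, Lemma \ref{lem:no23nofront}) annihilates exactly the $v$-linear term of $b_3$; incidentally, no adjustment of $\tilde\eta$ as in \eqref{eq:vf100}--\eqref{eq:vf200} is needed for this, since condition \ref{itm:no23} involves only $\xi,\eta$. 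The genuine gap is in your first stage. After the Taylor expansion the last two components are $a_i(u)+v^2b_i(u,v)$ with $b_2,b_3$ \emph{functions}, and you claim that a rotation in the last two coordinates turns the second component into $a_2(u)+v^2/2$. A rotation is a constant linear map: it replaces $b_2$ by $\cos\theta\,b_2-\sin\theta\,b_3$, which is again a nonconstant function of $(u,v)$ and cannot be made identically equal to $1/2$ by any choice of $\theta$. Your remark that ``the rotation is well-defined and smooth in $u$'' suggests you actually intend a $u$-dependent rotation $R(u)$; but $(u,v)\mapsto R(u)f(u,v)$ is not of the form $A\circ f\circ\phi$ for an isometry $A$ of $\R^3$ and a diffeomorphism $\phi$ of the source (it even changes the first fundamental form), so it is not available under the statement being proved. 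Likewise ``killing the $v^2$ term of the third'' identically in $u$ is impossible by a constant rotation---and also unnecessary, since the target form retains $v^2b_3(u,v)$.

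What the normalization actually requires is a rescaling of $v$ on the source, and this is where non-degeneracy enters quantitatively; this is the step your proposal is missing. The paper applies one fixed isometry so that $df_0(T_0\R^2)=\R\cdot(1,0,0)$ and $\nu(0,0)=(0,0,1)$, sets $u=f_1$ (your reparametrization step), and then uses that $0$ is non-degenerate of the first kind to get $\lambda_v(0,0)=\det(f_u,f_{vv},\nu)(0,0)=b_2(0,0)\neq0$. Consequently $\tilde v:=v\sqrt{|b_2(u,v)|}$ is a legitimate coordinate change near $0$, and in $(u,\tilde v)$ the second component becomes $a_2(u)\pm\tilde v^2/2$ (the sign fixed by a further reflection), while the third keeps the form $a_3(u)+\tilde v^2\tilde b_3(u,\tilde v)$; no rotation beyond the initial constant alignment is used. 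One further caution: you invoke Proposition \ref{prop:criteria} to obtain condition \ref{itm:no23}, but the paper's appendix proves that criterion \emph{using} Proposition \ref{prop:normalformf}; to keep the logic non-circular you should either cite the criterion as the external result of \cite{HKS}, or argue as the paper does, deriving $\det(f_u,f_{vv},f_{vvv})(u,0)=0$ from Lemma \ref{lem:no23nofront} together with the fact that a ${5/2}$-cuspidal edge is not a front (front-ness being invariant under $\A$-equivalence, and the model $(u,v^2,v^5)$ not being a front).
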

\begin{proof}
Let $\nu$ be a unit
normal vector field along  $f$.
Since $\rank df_0=1$, by an isometry $A$ on $\R^3$,
we may assume $df_0(X)=(*,0,0)$ for any $X\in T_0\R^2$
and $\nu(0,0)=(0,0,1)$,
where $*$ stands for a real number.
Since $0$ is a singular point of the first kind,
$S(f)$ is a regular curve in $(\R^2,0)$, and
$\eta$ is transversal to $S(f)$.
Thus there exists a coordinate system $(\bar u,\bar v)$
satisfying $S(f)=\{\bar v=0\}$ and $\eta={\partial}_{\bar v}$.
Since $f_{\bar u}(0,0)=(a,0,0)$ $(a\ne0)$, 
setting $u=f_1(\bar u,\bar v),v=\bar v$, 
the coordinate system
$(u,v)$ 
satisfies 
\begin{multline}\label{eq:normal100}
f(u,v)
=(u,f_2(u,v),f_3(u,v)),\\
(f_2)_u(0,0)=(f_3)_u(0,0)=0,\quad
\nu(0,0)=(0,0,1),
\end{multline}
where $f_1(u,v)$ is the first component of $f$.
Since $f_v(u,0)=0$,
there exist functions $a_2,a_3,b_2,b_3$ such that
$f_i(u,v)=a_i(u)+v^2b_i(u,v)/2$ $(i=2,3)$.
Since $0$ is non-degenerate,
$\lambda_v(0,0)\ne0$. Thus $\det(f_u,f_{vv},\nu)(0,0)=b_2(0,0)\ne0$.
Setting $\tilde u=u,\tilde v=v \sqrt{|b_2(u,v)|}$, 
\eqref{eq:normal100} is
$$
f(u,\tilde v)
=(u,a_2(u)\pm\tilde v^2/2,a_3(u)+\tilde v^2 \tilde b_3(u,\tilde v)).
$$
This shows the first assertion.

If $0$ is a ${5/2}$-cuspidal edge,
then by Lemma \ref{prop:criteria}, $\det(f_u,f_{vv},f_{vvv})(u,0)=0$ holds.
Thus we have the second assertion.
\end{proof}
Proposition \ref{prop:normalform} is now obvious
by Proposition \ref{prop:normalformf}.

\subsection{Proof of Proposition\/ {\rm \ref{prop:criteria}}}
\begin{proof}[Proof of Proposition\/ {\rm \ref{prop:criteria}}]
To show Proposition \ref{prop:criteria},
{firstly we show the independence of the condition
on the choice of the vector fields.
Obviously, the condition $(1)$ 
does not depend on
the choice of the vector fields.
Since the condition $(2)$ is equivalent to
$f$ not being a front (Fact \ref{fact:front-condition}), 
the condition $(2)$ does not depend on
the choice of the vector fields.
Moreover, by the proof of Lemma \ref{lem:welldef},
we see the independence of the condition $(3)$
on the choice of the vector fields.}

By Proposition \ref{prop:normalformf},
we may assume that $f$ {is written in} the form
$
f(u,v)
=(u,v^2,v^5c_5(u,v))
$.
There exist functions $c_6,c_7$ such that
$c_5(u,v)=c_6(u,v^2)+v c_7(u,v^2)$.
{Considering $\Phi_1\circ f(u,v)$, where
$\Phi_1(X,Y,Z)=(X,Y,Z-Y^3c_7(X,Y))$,}
we may assume that $f$ has the form
$
f(u,v)
=(u,v^2,v^5c_6(u,v^2))
$.
Then a pair of vector fields $\xi=\partial_u$, $\eta=\partial_v$
satisfies
the condition of Proposition \ref{prop:criteria}
with \eqref{eq:eta}, and we see that $l=0$.
By condition \ref{itm:25new} of Proposition \ref{prop:criteria},
we see $c_6(0,0)\ne0$.
We set ${\Phi_2}(X,Y,Z)=(X,Y,Z/c_6(X,Y))$.
Then ${\Phi_2}\circ f=(u,v^2,v^5)$, which shows the assertion.
\end{proof}

\begin{acknowledgements}
The authors would like to thank Wayne Rossman,
Masaaki Umehara and Kotaro Yamada 
for helpful comments.
{The authors also thank Yuki Matsui for helping with calculations,
and the referee for a careful reading and helpful comments.}
\end{acknowledgements}

\medskip
\end{document}